\newtheorem{theorem}{Theorem}[section]
\newtheorem{corollary}[theorem]{Corollary}
\newtheorem{lemma}[theorem]{Lemma}
\newtheorem{proposition}[theorem]{Proposition}
\theoremstyle{definition}
\newtheorem{definition}[theorem]{Definition}
\newtheorem{remark}[theorem]{Remark}
\newtheorem{example}[theorem]{Example}
\theoremstyle{plain}
\newtheorem{claim}{Claim}
\theoremstyle{definition}
\theoremstyle{remark}
\newcommand{\Aut}{{\rm Aut}}
\newcommand{\Zb}{{\mathbb Z}}
\newcommand{\Nb}{{\mathbb N}}
\newcommand{\IN}{{\mathbb N}}
\newcommand{\IZ}{{\mathbb Z}}
\newcommand{\os}{{\boldsymbol{s}}}
\newcommand{\sP}{{\mathscr P}}
\newcommand{\NSA}{{\rm NSA}}
\newcommand{\SA}{{\rm SA}}
\newcommand{\bs}{\mathbf{\Sigma}}
\newcommand{\bb}[1]{\llbracket {#1}\rrbracket}
\newcommand{\e}{\varepsilon}
\newcommand{\R}{R}
\newcommand{\F}{\mathbf{F}}
\newcommand{\G}{\Gamma}
\DeclareMathOperator{\cost}{\mathrm{cost}}
\DeclareMathOperator{\dom}{\mathrm{dom}}
\DeclareMathOperator{\ran}{\mathrm{ran}}
\DeclareMathOperator{\tr}{{\mathrm tr}}
\newcommand{\p}{\varphi}
\newcommand{\IP}{\mathbb{P}}
\newcommand{\Res}{{\mathrm{Res}}}
\newcommand{\sym}{{\pm}}
\newcommand{\slower}{\underline{s}}
\newcommand{\acts}{\curvearrowright}
\date{\today}
\title[Orbit equivalence and sofic approximation]{Orbit equivalence and sofic approximation}
\author{K. Dykema}
\author{D. Kerr}
\author{M. Pichot}
\begin{document}
\maketitle

\begin{abstract}
Given an ergodic probability measure preserving dynamical system $\G\acts (X,\mu)$, where $\G$ is a finitely generated countable group, we show that the asymptotic growth of the number of finite models for the dynamics, in the sense of sofic approximation, is an  invariant of orbit equivalence. We then prove an additivity formula for free products of orbit structures with amenable (possibly trivial) amalgamation. In particular, we obtain purely combinatorial proofs of several results in orbit equivalence theory.
\end{abstract}

\section{Introduction}

In this paper we introduce a new invariant of probability measure preserving dynamical systems using the idea of sofic approximation. This invariant is  combinatorial in nature. It is defined by counting the number of sofic models on finite sets to within a given precision, and taking a limiting value of the asymptotic growth of these numbers as the finite sets get larger and larger and the precision gets better and better (see  Section \ref{S- definition} for a detailed discussion).

Let $\G\acts(X,\mu)$ be a probability measure preserving (pmp) action of a finitely generated group $\G$ on a standard probability space $(X,\mu)$, and let $F$ be a finite dynamical generating set for $\G\acts(X,\mu)$, in the sense of Definition \ref{D-gen}. For instance, if the action $\G\acts (X,\mu)$ has a finite generating partition $\sP$, then $F$ can be taken to be the union of the partition $\sP$ and a finite generating set of $\G$, viewed as a set of measure-preserving (partial) isomorphisms of  $(X,\mu)$.
 We associate to  $F$ a value $s(F)$  in $\{-\infty\}\cup [0,\infty[$, called the sofic dimension of $F$, which satisfies the following invariance property.

\begin{theorem}\label{th1} The sofic dimension $s(F)$ of $F$ is an invariant of orbit equivalence. Namely, it depends only on the orbit partition of the action $\G\acts(X,\mu)$.
\end{theorem}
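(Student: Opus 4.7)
The plan is to prove the following stronger-sounding claim, from which Theorem~\ref{th1} follows at once: for every finite dynamical generating set $F$ of the orbit equivalence relation $\cR = \cR_{\G\acts X}$ and every element $\psi\in\bb{\cR}$ of the full pseudogroup, one has $s(F\cup\{\psi\}) = s(F)$. Granting this, given two dynamical generating sets $F$ and $F'$ for the same $\cR$, iteratively adjoining the elements of $F'$ to $F$ yields $s(F) = s(F\cup F')$, and symmetrically $s(F') = s(F\cup F')$; since orbit-equivalent actions have isomorphic orbit equivalence relations, the theorem follows. The essential input is a standard pseudogroup approximation: because $F$ dynamically generates $\cR$, for every $\e > 0$ there exist a measurable partition $\dom(\psi) = A_0\sqcup A_1\sqcup\cdots\sqcup A_k$ with $\mu(A_0) < \e$ and words $w_1,\ldots,w_k$ in $F^{\pm 1}$ with $\psi|_{A_i} = w_i|_{A_i}$ for each $i\geq 1$.

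For the lower bound $s(F\cup\{\psi\}) \geq s(F)$, I would extend each sufficiently accurate sofic model $(T_f)_{f\in F}$ on a finite set of cardinality $n$ to a model for $F\cup\{\psi\}$ canonically, by defining $T_\psi$ in a piecewise manner: on the combinatorial preimage of each $A_i$ inside $[n]$ (locatable from the $T_f$, either directly when the $A_i$ come from the generating partition or after a further approximation by words in $F$), let $T_\psi$ act as the evaluation of $w_i$ at $T$. Routine bookkeeping then shows that this produces an $(F\cup\{\psi\})$-model at precision controlled by $\e$, the number of pieces $k$, and the word lengths $|w_i|$, and that distinct $F$-models yield distinct extensions, so the count of $(F\cup\{\psi\})$-models is at least the count of $F$-models.

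The main obstacle is the reverse inequality $s(F\cup\{\psi\}) \leq s(F)$. Here one fixes an $F$-model and must bound the number of $T_\psi$ that together with it form a valid $(F\cup\{\psi\})$-model. The key technical point is that the approximate relations defining a sofic model for $F\cup\{\psi\}$, combined with the piecewise-word expression for $\psi$, force $T_\psi$ to lie within Hamming distance $O(\e)\,n$ of the canonical extension constructed above; the number of permutations in such a Hamming ball grows like $2^{o(n)}$ as $\e\to 0$ and $n\to\infty$, and hence contributes nothing to the exponential growth rate that defines $s$. Carrying this out rigorously requires a careful interleaving of the pseudogroup precision $\e$, the sofic precision $\delta$, and the asymptotic regime in $n$ — this interleaving, rather than any individual inequality, is what I anticipate to be the hardest part of the argument.
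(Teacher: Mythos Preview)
Your approach is correct and is essentially the paper's own argument (Theorem~\ref{P-generating}), specialized to the case $E = F\cup\{\psi\}$ and then bootstrapped by iteration; the paper proves $s(E)\leq s(F)$ directly for arbitrary finite dynamical generating sets via the same mechanism --- approximate elements of one set by sums of words in the other, invoke automatic continuity and linearity of microstates (Lemmas~\ref{L - sofic approx distance} and~\ref{L -Linearity}), and control fibers by an $\varepsilon$-covering count --- and obtains the other inequality by symmetry, which in your setup is exactly your canonical-extension lower bound. One small correction: the Hamming ball of radius $\varepsilon d$ in $\llbracket d\rrbracket$ has cardinality of order $d^{\varepsilon d}$, not $2^{o(d)}$; what matters is that after normalizing by $d\log d$ its contribution is $O(\varepsilon)$, which vanishes as $\varepsilon\to 0$ (this is Lemma~\ref{L-nbhd} and Lemma~\ref{L-ineq lim sup epsilon} in the paper).
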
 

 We recall that the \emph{orbit partition} of an action $\G\acts (X,\mu)$ is the measure equivalence relation $R$ on $(X,\mu)$ whose classes are the orbits of the action $\G\acts (X,\mu)$. Two actions are \emph{orbit equivalent} to each other if they have isomorphic orbit partition (see \cite{Kechris2004} for a recent survey on orbit equivalence). The notion of finite dynamical generation considered in  
Definition \ref{D-gen} is more general than the usual dynamical notion requiring the existence of a finite generating partition for $\G\acts (X,\mu)$. In fact, it is precisely the orbit equivalence generalization of this notion, in that it only requires $\G\acts (X,\mu)$ to be orbit equivalent to an action of a finitely generated group having such a partition. We observe in Section \ref{S- definition} that this holds for instance if $\G\acts(X,\mu)$ is ergodic. On the other hand, it is more restrictive than the usual notion of generating sets for equivalence relations (also called graphings, see Definition \ref{D-gen old}), in that it requires generation of the measure algebra of $(X,\mu)$ in addition to generation of the classes of the relation $R$.   It is natural to reformulate Theorem \ref{th1} in terms of abstract pmp equivalence relations which admit a finite dynamical generating set, and this is done in Section \ref{S-OE section}, see Theorem \ref{P-generating}. We shall denote by $s(R)$ the common value of $s(F)$ taken over all finite dynamical generating sets $F$.
  
Under a mild technical assumption called ``$s$-regularity" we then show (see Theorem \ref{T- amalgamated}) the following additivity formula for amalgamated free products (see  \cite{Gaboriau2000a} D\'ef. IV.6 for the definition):

\begin{theorem} \label{th2} Assume that the pmp equivalence relation $R$ is an amalgamated free product of the form $R=R_1*_{R_3}R_2$, where the finitely generated relations $R_1$ and $R_2$ are $s$-regular and $R_3$ is amenable. Then $R$ is $s$-regular and
\[
s(R)=s(R_1)+s(R_2)-1+\mu(D).
\]
where $D$ is a fundamental domain of the finite component of $R_3$.
\end{theorem}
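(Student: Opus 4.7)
The plan is to establish matching upper and lower exponential growth rates for the number of $F$-microstates of $R$, which will simultaneously yield the formula and the $s$-regularity of $R$. Throughout, I fix a finite dynamical generating set $F_3$ for $R_3$ and enlarge it to finite dynamical generating sets $F_1 \supseteq F_3$ for $R_1$ and $F_2 \supseteq F_3$ for $R_2$, so that $F = F_1 \cup F_2$ is a finite dynamical generating set for $R = R_1 *_{R_3} R_2$. By Theorem~\ref{th1} it suffices to compute $s(F)$.

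The key input from the amenability of $R_3$ is a Connes--Feldman--Weiss type rigidity statement for its sofic approximations: any two good $F_3$-microstates of $R_3$ on $\{1,\dots,n\}$ are, up to an $o(1)$ fraction of points, conjugate by a permutation, and the number of such microstates grows like $\exp\bigl((\mu(D)+o(1))\, n \log n\bigr)$. The fundamental domain $D$ of the finite component accounts for all of the exponential growth because on the infinite component the dynamics are essentially unique up to conjugation by Ornstein--Weiss-style Rokhlin tilings, whereas on the finite component every choice of labelling within a tower contributes to microstate growth. This is precisely the identity $s(R_3) = \mu(D)$, and it is the main technical ingredient.

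For the upper bound $s(R) \le s(R_1) + s(R_2) - 1 + \mu(D)$, I would fix a single good $F_3$-microstate $\theta_3$ on $\{1,\dots,n\}$ and bound the number of $F$-microstates $\theta$ whose $F_3$-part is (approximately conjugate to) $\theta_3$: such $\theta$ are obtained by independently extending $\theta_3$ to an $F_1$-microstate and to an $F_2$-microstate, because the free product relation $R_1 *_{R_3} R_2$ imposes no further constraint between the two extensions beyond agreement on $R_3$. Multiplying by the number of choices of $\theta_3$ and dividing by the overcounting coming from conjugation inside $\Sym(n)$ supported on the $R_3$-skeleton, the count reads, up to subexponential factors, $\exp\bigl((s(F_1)+s(F_2)+\mu(D)-1)\, n \log n\bigr)$, where the $-1$ comes from the $\Sym(n)$ overcounting and the $+\mu(D)$ from the number of distinct $R_3$-microstates.

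For the lower bound $s(R) \ge s(R_1) + s(R_2) - 1 + \mu(D)$ I would reverse the construction. Starting from independent near-maximal families of $F_i$-microstates $\theta_i$ of $R_i$ (using the $s$-regularity of $R_1$ and $R_2$, so that the near-maximal count is attained as a genuine limit and not just a limsup), use the rigidity of the amenable subrelation $R_3$ to conjugate $\theta_2$ by a permutation so that the $F_3$-parts of $\theta_1$ and the conjugated $\theta_2$ coincide outside an arbitrarily small set of points. The resulting pair defines an $F$-microstate of $R$, and distinct starting pairs give distinct resulting microstates modulo a controlled $\Sym(n)$ ambiguity, producing the required lower count. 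Taking liminf and matching with the upper bound both proves the formula and shows that the liminf equals the limsup, hence that $R$ is $s$-regular.

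The main obstacle is the amenable rigidity and gluing step for $R_3$: one must prove that near-optimal sofic approximations of an amenable pmp relation are unique up to a permutation whose support has measure $o(1)$, with quantitative control on the number of gluings, and one must verify that the counting of extensions of a fixed $R_3$-microstate to $R_i$-microstates factorizes cleanly. The factor $\mu(D)$ appears as the unique contribution to $s(R_3)$ because the finite component of an amenable relation is combinatorially a bundle of finite equivalence classes over a transversal $D$, while the infinite component is sofically rigid; controlling the interplay between these two regimes under amalgamation is where the technical weight of the argument lies.
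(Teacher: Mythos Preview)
Your overall architecture---an upper bound by restricting microstates and a lower bound by gluing---matches the paper's strategy (Lemmas~\ref{L - S sup} and~\ref{L- slower amalgam leq}), and the use of Connes--Feldman--Weiss to reduce the amenable amalgam to a finite combinatorial object is also right. But two points are off, one minor and one serious.

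The minor point: your formula $s(R_3)=\mu(D)$ is incorrect; Corollary~\ref{L - s(G) G finite} gives $s(R_3)=1-\mu(D)$. The quantity that contributes $\mu(D)$ to the count is not the number of $R_3$-microstates but the size of the \emph{commutant} $[d]_G$ of a fixed embedding of the finite model $G$ into $\llbracket d\rrbracket$: one has $\log|[d]_G|\sim \mu(D)\, d\log d$, and the factor $|[d]_G|/d!$ is what produces $-1+\mu(D)$ on both sides. Your informal accounting (``$-1$ from $\Sym(n)$ overcounting, $+\mu(D)$ from distinct $R_3$-microstates'') does not track this correctly.

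The serious gap is in the lower bound. You write that after conjugating $\theta_2$ so that the $F_3$-parts agree, ``the resulting pair defines an $F$-microstate of $R$''. This is exactly the step that fails without further work. Agreement on $R_3$ ensures approximate multiplicativity for words lying entirely in $\llbracket R_1\rrbracket$ or in $\llbracket R_2\rrbracket$, but it says nothing about the \emph{trace} of an alternating word $s_1t_1s_2t_2\cdots$ with $s_i\in\llbracket R_1\rrbracket\setminus\llbracket R_3\rrbracket$ and $t_i\in\llbracket R_2\rrbracket\setminus\llbracket R_3\rrbracket$. In $R_1*_{R_3}R_2$ such a word has trace zero, and this is precisely the condition that encodes freeness over $R_3$; there is no reason an arbitrary pair $(\theta_1,\theta_2)$ agreeing on $R_3$ should satisfy it. The paper resolves this by introducing a \emph{further} random conjugation of $\theta_2$ by an element of the commutant $[d]_G$ and invoking an asymptotic freeness result (Lemmas~\ref{random conjugacy} and~\ref{concentration2}, proved via concentration of measure on the L\'evy family $[d]_G$) to show that for a set of conjugating permutations of measure tending to $1$, the alternating-word traces are small. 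This random-permutation/asymptotic-freeness mechanism is the technical core of the lower bound and is absent from your sketch; without it the gluing step is not justified.
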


\begin{remark}
Given a nonprincipal ultrafilter $\omega\in \beta(\IN)\setminus \IN$, we can modify the definition of $s$ to obtain another invariant $s_\omega(R)\leq s(R)$. If $R$ is $s$-regular, then $s_\omega(R)= s(R)$. With this variation one recovers the formula $s_\omega(R)=s_\omega(R_1)+s_\omega(R_2)-1+\mu(D)$ for amalgamated products of the form $R=R_1*_{R_3}R_2$, where $R_1$ and $R_2$ have a finite dynamical generating set (but are not necessarily $s$-regular) and $R_3$ is amenable.  
\end{remark}

These results provide a new approach to orbit equivalence theory (for pmp actions) that relies essentially on counting arguments. For example, they imply that two free groups $\F_p$ and $\F_q$ with $p\neq q$ have no orbit equivalent free ergodic pmp action \cite{Gaboriau2000a}, or that every treeable ergodic pmp  equivalence relation (in the sense of \cite[D\'efinition I.2] {Gaboriau2000a})  is sofic   \cite{Elek2010d} (see also \cite{Pau}) and has the expected cost (as computed in \cite{Gaboriau2000a}). Every treeable ergodic pmp  equivalence relation is $s$-regular (see Corollary \ref{C-treeable}).

Let us now give some more details and historical background on free entropy, orbit equivalence, and sofic approximations.

Given a finite von Neumann algebra $M$,  Voiculescu introduced several quantities, in particular \emph{free entropy} and \emph{free entropy dimension}, defined by taking the asymptotic growth rate of the volume  of certain matricial microstates associated to a finite set $F$ of self-adjoint elements in  $M$ (see in particular \cite{Voiculescu1993} and \cite{Voiculescu1998}); for an equivalent packing dimension approach to free entropy dimension, see \cite{Jung2002}. Our invariant is a combinatorial analogue of these quantities for dynamical systems. Voiculescu's free entropy theory allowed him to  solve several longstanding open problems on finite von Neumann algebras. The computation of his invariants, that we shall simply denote  $\delta(F)$ here,  has by now been done for many finite sets $F\subset M$. We refer to the introduction of \cite{Brown2006e} for a recent overview of results relevant to the present paper.
The analogue of Theorem \ref{th1} is not known for arbitrary finite generating sets of a finite von Neumann algebra $M$, although it is known that $\delta(F)\leq 1$ for all finite generating sets $F\subset M$ when $M$ is the hyperfinite $\mathrm{II}_1$ factor, or (much) more generally, when $M$ is strongly 1-bounded \cite{Jung2005}. Despite much recent progress, the question of distinguishing the $\mathrm{II}_1$ factors of two free groups $\F_p$ and $\F_q$ with $p\neq q$ up to isomorphism remains open.

The \emph{cost} of a pmp equivalence relation, denoted $\cost(R)$, was studied by Gaboriau in his breakthrough paper \cite{Gaboriau2000a}. One of the main results of \cite{Gaboriau2000a} is the additivity formula for the cost,  
\[
\cost(R)=\cost(R_1)+\cost(R_2)-1+\mu(D),
\]
for a free product $R=R_1*_{R_3}R_2$ of finitely generated equivalence relations over an amenable subrelation $R_3$. In the context of Voiculescu's free entropy, a similar formula for certain finite sets of free products of the form $M=M_1*_{M_3}M_2$ amalgamated over an amenable von Neumann algebra $M_3$ is established in \cite{Brown2006e}. Furthermore,
 a relative version of Voiculescu's free entropy theory was shown by Shlyakhtenko in \cite{Shlyakhtenko2003} to provide another orbit equivalence invariant $\delta(R)$. Shlyakhtenko proves that  
 \[
 \delta(R)=\delta(R_1)+\delta(R_2)
 \] 
 whenever $R=R_1*R_2$ is a free product of finitely generated pmp equivalence relations.

The relation between these invariants is unclear in general. 
In all known cases, we have $\slower(R)=s_\omega(R)=s(R)=\delta(R)=\cost(R)$, 
but it seems unlikely that these equalities hold in general. They do hold if $R$ is treeable, as a consequence of the fact that these invariants take the same value for amenable equivalence relations, and behave similarly under free products.  For example, if $R$ is the orbit equivalence relation of a free pmp action of a  free group $\F_p$ on $p$ generators, then $s(R)=\delta(R)=\cost(R)=p$. Note that, in particular, these invariants cannot distinguish between any two different actions of $\F_p$ up to orbit equivalence (there are uncountably many such actions \cite{Gaboriau2005e}), but it is not known whether this remains the case for more general acting groups.
 We note that the equality $s(R)= \cost(R)$ implies in particular that $R$ is sofic.

Sofic groups were introduced by Gromov in \cite[4.G]{Gromov1999} (see also \cite{Weiss2000}) and have generated a wealth of activity in recent years. While it seems difficult to construct groups that are not sofic, many of the conjectures that are formulated for all countable groups are known to be true for sofic groups (see e.g. \cite{Gromov1999, Elek2005c}).
The invariant $s$ measures ``how sofic"  the system under consideration is. Understandably, it is maximal for treeable equivalence relations, that is, in the freest case (a similar invariant for groups would give $s(\F_p)=p$, which is the maximal value for a $p$-generated group). The notion of sofic  equivalence relations was introduced and studied in \cite{Elek2010d} (the definition we are using in the present paper is taken
from \cite{Oz} and was studied in \cite{Pau}). Sofic approximations have already found several applications to dynamical systems, in particular through  Bowen's recent construction of measure-conjugacy entropy invariants for actions of sofic groups \cite{Bowen2010}. Besides the analogy with Voiculescu's free entropy dimension mentioned already, it is interesting to note that the proof of Theorem \ref{th1} also has some similarity with the proof of invariance of entropy under change of finite generating partition \cite[Theorem 2.2]{Bowen2010}, see also the proof of Theorem 2.6 in \cite{Kerr2010a}. 
  In an other direction,
 we mention that, by considering actions by Bernoulli shifts, our results can also be applied to finitely generated groups (although this is not the shortest way to obtain purely group-theoretic results). For example, they imply  that  an amalgamated free product of sofic groups over an amenable group is sofic, a recent result obtained independently by \cite{Elek2010c} and \cite{Pau} (see also \cite{Elek2003} for the case of free products with no amalgamation and \cite{Collins2010} for the case of amalgamation over monotileably amenable subgroups). We will study these aspects from the viewpoint of the sofic dimension in a separate paper \cite{dkp}.
 
While working on this project we learned that Miklo\'s Ab\'ert, Lewis Bowen, and Nikolai Nikolov also defined and studied the same notion of sofic dimension for groups, and it is in fact their terminology that we have adopted. Our paper answers a question of Miklo\'s Ab\'ert, who asked whether the theory can be extended to measure-preserving group actions  (see Item 14 in Section 4 of \cite{Abert2009}).

\medskip

{\bf Acknowledgments.} The first author was partially supported by NSF grant DMS-0901220.
 Some of this research was conducted while he was attending the
 Erwin Schr\"odinger Institute in Vienna and he would like to thank
 the institute and the organizers of the program on Bialgebras and
 Free Probability. The second author was partially supported by NSF grant DMS-0900938.
He would like to thank Yasuyuki Kawahigashi for hosting his
January 2010 visit to the University of Tokyo during which
the initial stages of this work were carried out. The third author was partially supported by JSPS and thanks Narutaka Ozawa for helpful discussions on the subject. 

\section{Definition of $s(F)$, dynamical generating sets, and $s$-regularity} \label{S- definition}

Throughout $(X,\mu )$ denotes a standard probability space. We refer to \cite{Kechris2004} for a  recent reference on measured equivalence relations.
 
Let $\R$ be a measured  equivalence relation on $(X,\mu )$.  
We write $[\R]$ for the full group of $\R$ and $\llbracket R\rrbracket$ for the set of all partial measure-preserving transformations of $X$ whose graph is included in $R$, where we identify two transformations if they coincide almost surely in the usual way.  The product $st$ of two transformations $s,t\in \llbracket R\rrbracket$ is defined to be  $(st)x:=s(t(x))$ for all $x$ in  $\dom st=t^{-1}(\ran t\cap \dom s)$,  where $\dom s$ and $\ran s$ denote  respectively the domain and the range of $s\in \llbracket R\rrbracket$. We write $1\in [R]$ for the identity transformation of $X$, and $0\in \llbracket R\rrbracket$ for the negligible transformation of $X$ (with empty domain). The set $\llbracket R\rrbracket$  is an inverse semigroup under composition and inverse. 
Note that we have a partial additive structure on $\llbracket R\rrbracket$, with neutral element $0\in \llbracket R\rrbracket$, coming from the additive structure on the von Neumann algebra $LR$. Namely, if $s_1 , \dots ,s_k$ are elements of $\llbracket R\rrbracket$ with pairwise disjoint domains and pairwise disjoint ranges (in which case we say that the $s_i$ are pairwise orthogonal), then  $\sum_{i=1}^k s_k\in \llbracket R\rrbracket$ is defined to be the partial isomorphism which coincides with $s_i$ on $\dom s_i$ and is undefined elsewhere. 
Given $F\subseteq \llbracket R\rrbracket$ we write $\mathbf{\Sigma}F$ for the set of all elements in $\llbracket R\rrbracket$ which can be written as a finite sum of elements in $F$. That is, $\mathbf{\Sigma}F$ is the set of all sums $\sum_{i=1}^k s_k$ where $s_1 , \dots ,s_k$ are pairwise orthogonal elements of $F$. 

The equivalence relation $R$ is said to \emph{preserve the measure $\mu$} if $\mu(\dom s)=\mu(\ran s)$ for all $s\in \llbracket R\rrbracket$. By ``pmp equivalence relation on $(X,\mu)$" we mean a measured equivalence relation on the probability space $(X,\mu)$ which preserves the measure $\mu$. 

Let $R$ be a pmp equivalence relation on $(X,\mu)$. The \emph{uniform distance} $|s-t|$ between two elements $s,t\in \llbracket R\rrbracket$ is defined by 
\[
|s-t|:=\mu\{x\in \dom s\cup \dom t\,|\, s(x)\neq t(x)\},
\] 
with the convention that $s(x)\neq t(x)$ if $x\in \dom s\mathop{\Delta} \dom t$. See Lemma \ref{L - Distance} below for some elementary properties of the distance $|\cdot|$ (note that the restriction of $|\cdot|$ to $[R]$ is the usual uniform distance with respect to which $[R]$ is a Polish group). We also set 
\[
\tau(s)=\tau_\R(s):=\mu\{x\in \dom s\,|\, s(x)= x\},
\] 
that is, the restriction to $\llbracket R\rrbracket$ of the normalized trace on the von Neumann algebra $LR$ of $\R$ (viewing elements of $\llbracket R\rrbracket$ as partial isometries in $L\R$ in the usual way).

Given pmp equivalence relations $\R$ and $\R'$, a finite set $F\subset \llbracket\R\rrbracket$, integers $n,d\geq 1$ and a $\delta>0$, we say that a map $\varphi : \llbracket\R\rrbracket \to \llbracket\R'\rrbracket$ is 
{\it $(F ,n,\delta )$-multiplicative} if 
\[
| \varphi (st ) - \varphi (s )\varphi (t ) | < \delta 
\]
for all $s, t \in \bs F_\pm^{ n}$ such that $st\in \bs F_\pm^{ n}$, and $(F ,n,\delta )$-\emph{trace-preserving} if
\[
| \tau_{\R'}(\varphi (s)) - \tau_R(s) | < \delta.
\]
for all $s\in\bs F_\pm^n$, where we write $F_{\pm}$ for the finite subset of $\llbracket\R\rrbracket$ defined by $F_{\pm}:=F\cup \{s^{-1},\, s\in F\}\cup \{1\}$, and $F^n$ for the finite subset of $\llbracket\R\rrbracket$ consisting of all products of $n$ elements of $F$, with the convention that $F_{\pm}^n$ refers to the subset $(F_{\pm})^n$.

  We note that these two notions are local in the sense that they only involve knowledge of the values of $\p$ on the finite set $\bs F_\pm^n$.

Let $d$ be an integer. We denote by $[d]$ the symmetric group on $d$ elements and let $\llbracket d\rrbracket$ be the associated inverse semigroup of partial permutations, i.e., the
inverse semigroup associated to the full equivalence relation on the set with $d$ elements, endowed with the uniform probability measure.

Given  $F$ a finite subset of $\llbracket\R\rrbracket$, $n\in\Nb$, and
 $\delta > 0$,  we define $\SA (F ,n,\delta ,d)$ to be the set of all unital maps $\varphi : \llbracket\R\rrbracket \to \llbracket d \rrbracket$ 
which are $(F ,n,\delta )$-multiplicative  and $(F ,n,\delta )$-trace-preserving.  Elements of $\SA (F ,n,\delta ,d)$ are called (sofic) \emph{microstates} for $R$.
We write  $\NSA (F ,n ,\delta ,d)$ for the number of distinct restrictions of elements of $\SA (F ,n,\delta ,d)$ to the set $F$.

\begin{definition}[Definition of the sofic dimension]\label{D-equivalence relation local}
We set
\begin{align*}
s(F ,n,\delta ) &= \limsup_{d\to\infty} \frac{1}{d \log d} \log \NSA (F ,n ,\delta ,d), \\
s(F ,n) &= \inf_{\delta > 0} s(F ,n,\delta ), \\
s(F ) &= \inf_{n\in\Nb} s(F ,n) .
\end{align*}
We similarly define $\slower (F,n,\delta )$, $\slower (F,n)$, and $\slower (F)$ by replacing the
limit supremum in the first line with a limit infimum. Given a nonprincipal ultrafilter $\omega \in \beta(\IN)\setminus \IN$, we set 
\begin{align*}
s_\omega(F ,n,\delta ) = \lim_{d\to\omega} \frac{1}{d \log d} \log \NSA (F ,n ,\delta ,d), 
\end{align*}
and define $s_\omega(F ,n)$ and $s_\omega(F )$ as above by taking infima over $\delta>0$ and $n\in\Nb$. In particular, $\slower(F)\leq s_\omega(F)\leq s(F)$. We call $s(F)$ the \emph{sofic dimension} of $F$ and $\slower(F)$ the \emph{lower sofic dimension} of $F$.
\end{definition}

We trust that our notation  $s(F)$ for  the invariant and  $s\in F$ for the element will not cause confusion. Ozawa \cite{Oz}  is using the 2-norm on $[d]$ (rather than the Hamming distance) to define the sofic property, but the resulting two definitions are easily seen to be equivalent.

\begin{definition}\label{D-gen old}
A set $F\subset \llbracket\R\rrbracket$ is called a \emph{generating set} (or \emph{graphing}) of $R$ if for almost every $(x,y)\in \R$ there exists an $n\in \IN$ and a $n$-tuple $(s_1,\ldots, s_n)\in F_{\pm}^{\times n}$ such that   $y=s_1\cdots s_n(x)$. An equivalence relation is said to be \emph{finitely generated} if it admits a finite generating set.
\end{definition}

For example, the Connes--Feldman--Weiss theorem \cite{Connes1981a} states that every amenable equivalence relation $R$ is \emph{singly generated}, namely generated by a single transformation $s$ in  $[R]$. In other words, for almost every $(x,y)\in \R$, we can find an $n\in \IZ$ such that $y=s^n(x)$.  

Levitt introduced the notion of  \emph{cost} for generating a relation $R$, which was much studied recently (see \cite{Lev,Gaboriau2000a}) and which we recall now.  Given a countable subset $F\subset  \llbracket\R\rrbracket$, set
$\cost(F):=\sum_{s\in F} \mu(\dom s)$. This is usually not an orbit equivalence invariant, but we can define one as follows.

\begin{definition}\label{D-cost equivalence relation} Given a pmp equivalence relation $R$, set
\begin{align*}
\cost (\R) &= \inf_{F} \cost(F) 
\end{align*}
where the infimum is taken over all countable generating sets of $\R$.
\end{definition}

Let us now introduce a different notion of generating set for equivalence relations, and define the notion of $s$-regularity.

\begin{definition}\label{D-gen}
A set $F\subseteq \llbracket R\rrbracket$ is called a \emph{dynamical generating set}  for $R$ if for every $t\in \llbracket R\rrbracket$ and $\varepsilon > 0$
there are an $n\in\Nb$ and $s\in \mathbf{\Sigma}F_\pm^n$ such that $| t - s | < \varepsilon$. We say that $R$ is \emph{dynamically finitely generated} if it admits a finite dynamical generating set.
\end{definition}

\begin{example}\label{Ex-bernoulli}
Let $\IZ\acts \{0,1\}^\IZ$ be a Bernoulli action of $\IZ$, where $\{0,1\}^\IZ$  is endowed with an invariant product probability measure. Let $s$ be the automorphism of $\{0,1\}^\IZ$ corresponding to the generator of $\IZ$ and, for $i=0,1$, let $p_i$ be projection onto the Borel subset of $\{0,1\}^\IZ$ consisting of all sequence whose 0 coordinate is $i$. Then $F=\{s, p_0 , p_1\}$ is a dynamical generating set for $\IZ\acts \{0,1\}^\IZ$. More generally, if $\G\acts (X,\mu)$ is a pmp action of a finitely generated group which admits a finite generating partition $\sP$, then the union of the finite set of projections associated to $\sP$ and a finite generating set of $\G$ (viewed as a subset of $\Aut(X,\mu)$)  forms a dynamical generating set for the orbit equivalence relation associated to $\G\acts (X,\mu)$. 
\end{example}

\begin{proposition}
An ergodic pmp equivalence relation is finitely generated if and only if it is dynamically finitely  generated.
\end{proposition}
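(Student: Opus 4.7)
Proof proposal.

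The plan is to treat the two implications separately: the direction from dynamical generation to graphing uses only measure-theoretic approximation, while the reverse requires ergodicity to upgrade a graphing into something that captures the full measure algebra of $(X,\mu)$.

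For $(\Leftarrow)$, I would show directly that a finite dynamical generating set $F$ is a graphing. Given $t\in\llbracket R\rrbracket$, pick $s_n\in\bs F_\pm^{k_n}$ with $|t-s_n|<1/n^2$. Writing $s_n$ as an orthogonal sum $\sum_j w_{n,j}$ of words in $F_\pm^{k_n}$, on the set $D_n:=\{x\in\dom t:t(x)=s_n(x)\}$ (which has measure at least $\mu(\dom t)-1/n^2$) the value $t(x)$ equals $w_{n,j}(x)$ for whichever summand has $x\in\dom w_{n,j}$. Borel--Cantelli applied to the complements of the $D_n$ then yields, for a.e.\ $x\in\dom t$, a word in $F_\pm$ sending $x$ to $t(x)$; since by Feldman--Moore $R$ is a countable union of graphs of elements of $\llbracket R\rrbracket$, this suffices.

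For $(\Rightarrow)$, the plan is to enlarge a given finite graphing $G$ by one aperiodic element of $[R]$ plus two projections. First, using ergodicity of $R$, I would invoke Dye's theorem to realize an ergodic hyperfinite subrelation of $R$ (for instance the orbit relation of any ergodic aperiodic element of $[R]$) as the orbit relation of an irrational rotation; pulling the rotation back along such an isomorphism produces an aperiodic ergodic $T\in[R]$ whose $\Zb$-system on $(X,\mu)$ has Kolmogorov--Sinai entropy $0$. Since $0<\log 2$, Krieger's finite generator theorem then yields $Q\subseteq X$ with $\sigma(T^nQ:n\in\Zb)$ equal to the Borel $\sigma$-algebra of $(X,\mu)$. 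I would then set
\[
F\ :=\ G\,\cup\,\{T,\ p_Q,\ p_{X\setminus Q}\},
\]
where $p_A\in\llbracket R\rrbracket$ denotes the partial identity on a measurable set $A$, and show that $F$ is dynamically generating.

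To verify the latter, fix $t\in\llbracket R\rrbracket$ and $\varepsilon>0$. Using the graphing property of $G$, write $\dom t=\bigsqcup_w A_w$ with $A_w=\{x\in\dom t:t(x)=w(x)\}$ indexed by reduced words $w$ in $G_\pm$, and pick a finite $W$ with $\mu(\dom t\setminus\bigsqcup_{w\in W}A_w)<\varepsilon/3$. Density of the Krieger Boolean algebra in the measure algebra gives, for some large $M$, sets $B_w$ in the finite Boolean algebra generated by $\{T^nQ:|n|\leq M\}$ approximating each $A_w$ (and each $\dom w$) to within $\varepsilon/(6|W|)$. Each atom of this algebra has projection
\[
\prod_{|n|\leq M}T^n\,p_{Q^{\epsilon_n}}\,T^{-n}\ \in\ F_\pm^{O(M^2)}\qquad(Q^+=Q,\ Q^-=X\setminus Q),
\]
so every $p_{B_w}$ is an orthogonal sum of such atom projections and lies in $\bs F_\pm^{O(M^2)}$. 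After trimming the $B_w$'s to be pairwise disjoint and contained in $\dom w$, the partial isomorphism $\sum_{w\in W}w\cdot p_{B_w}$ lies in $\bs F_\pm^{K}$ for some $K$ and satisfies $|t-\sum_{w\in W}w\cdot p_{B_w}|<\varepsilon$. The main obstacle is in the $(\Rightarrow)$ direction: extracting from ergodicity of $R$, via Dye's theorem, an element $T\in[R]$ of entropy zero so that Krieger furnishes a two-set partition whose $T$-translates generate $L^\infty(X,\mu)$; with $T$ and $Q$ in hand, the remainder is bookkeeping to turn $\sigma$-algebra density into uniform approximation of partial isomorphisms via the graphing property of $G$.
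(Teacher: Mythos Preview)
Your proposal is correct and shares the paper's overall architecture, but the two proofs differ in how the crucial generating partition is obtained in the $(\Rightarrow)$ direction. The paper, like you, first adds an ergodic automorphism $s_1\in[R]$ to the given graphing; it then applies Dye's theorem to identify the $s_1$-subrelation with the orbit relation of the Bernoulli shift $\Zb\curvearrowright\{0,1\}^{\Zb}$, which comes equipped with its canonical two-set generating partition (the cylinder sets on the $0$th coordinate, as in Example~\ref{Ex-bernoulli}). This bypasses your detour through the irrational rotation and Krieger's finite generator theorem: since Dye already lets you model the hyperfinite subrelation on \emph{any} ergodic $\Zb$-system, choosing Bernoulli directly gives the partition for free. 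Your route via ``entropy $0$ plus Krieger'' is a valid alternative, just longer. Conversely, your write-up is more explicit in two places where the paper is terse: you spell out the $(\Leftarrow)$ direction via Borel--Cantelli (the paper simply asserts it is clear), and you give a concrete verification that the enlarged set is dynamically generating by approximating domains in the Boolean algebra generated by translates of $Q$, whereas the paper just states the conclusion. Both approaches ultimately rest on the same two ingredients---existence of an ergodic element in $[R]$ and Dye's theorem---so the difference is one of packaging rather than substance.
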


\begin{proof} It is clear that any dynamical generating set is a generating set in the sense of Definition \ref{D-gen old}.
For the reverse implication, suppose that the pmp equivalence relation $R$ is finitely generated. Then there are
partial transformations $s_1 , \dots , s_n$ which generate $R$   in the sense of Definition \ref{D-gen old} and such that $s_1$ is an ergodic
automorphism of $(X,\mu )$. Indeed, we may just take any finite generating set of $R$ and add to it an ergodic transformation in $[R]$, whose existence is guaranteed by the ergodicity assumption on $R$ (see e.g. \cite{Kechris2004}). By Dye's theorem \cite{Dye1963}, the automorphism $s_1$ is orbit equivalent to a Bernoulli shift 
$\Zb\curvearrowright \{ 0,1 \}^\Zb$. Thus, as in the previous example, we can find a dynamical generating set $F$ of the subrelation of $R$ generated by $s_1$. Then the set $F\cup \{s_2 , \dots , s_n \}$
is a dynamical generating set for $R$.
\end{proof}

We will also need the following simple lemma.

\begin{lemma}\label{L - 2 dynamical generating sets}
Assume that the equivalence relation $R$ is generated by two subrelations $R_1$ and $R_2$, and that $F_1$ and $F_2$ are dynamical generating sets of $R_1$ and $R_2$. Then $F_1\cup F_2$ is a dynamical generating set of $R$.
\end{lemma}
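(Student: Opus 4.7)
The plan is to verify the condition of Definition~\ref{D-gen} directly: given $t \in \llbracket R \rrbracket$ and $\varepsilon > 0$, produce some $n$ and $s \in \bs (F_1 \cup F_2)_\pm^n$ with $|t - s| < \varepsilon$. This is done in two approximation stages, plus a technical orthogonality fix.

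\emph{Stage 1: approximate $t$ by an orthogonal sum of words in $\llbracket R_1 \rrbracket \cup \llbracket R_2 \rrbracket$.} Because $R_1 \cup R_2$ generates $R$ in the sense of Definition~\ref{D-gen old}, for almost every $x \in \dom t$ there is a finite word in partial transformations from $\llbracket R_1 \rrbracket \cup \llbracket R_2 \rrbracket$ carrying $x$ to $t(x)$. Writing $S_m \subseteq \dom t$ for the set of points realizable by such a word of length at most $m$, one has $S_m \nearrow \dom t$ up to null sets, so for $m$ large enough $\mu(\dom t \setminus S_m) < \varepsilon/3$. A standard measurable selection on $S_m$ then yields a finite partition $A_1, \ldots, A_k$ and, for each $j$, a word $u_1^{(j)} \cdots u_m^{(j)}$ with factors in $\llbracket R_1 \rrbracket \cup \llbracket R_2 \rrbracket$ (common length $m$ arranged by padding with the identity, which lies in both $\llbracket R_1 \rrbracket$ and $\llbracket R_2 \rrbracket$) such that $t|_{A_j} = (u_1^{(j)} \cdots u_m^{(j)})|_{A_j}$. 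Setting $t_j := u_1^{(j)} \cdots u_m^{(j)} \cdot 1_{A_j}$, the $t_j$ have domains in the disjoint $A_j$ and ranges in the disjoint $t(A_j)$, hence are pairwise orthogonal, and $|t - \sum_j t_j| < \varepsilon/3$.

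\emph{Stage 2: replace each factor using $F_1$ or $F_2$.} Fix $\eta > 0$ to be chosen later. Each $u_i^{(j)} \in \llbracket R_i \rrbracket$ admits an approximation $v_i^{(j)} \in \bs (F_i)_\pm^N \subseteq \bs (F_1 \cup F_2)_\pm^N$ within $\eta$ in uniform distance, since $F_i$ is a dynamical generating set of $R_i$; similarly, $1_{A_j} \in \llbracket R_1 \rrbracket$ admits an approximation $p_j \in \bs (F_1)_\pm^N$ within $\eta$, where $N$ is a common length obtained by padding with $1 \in F_\pm$. The elementary estimate $|ab - a'b'| \leq |a - a'| + |b - b'|$, a special case of Lemma~\ref{L - Distance}, iterates to $|t_j - s_j| \leq (m+1)\eta$ for $s_j := v_1^{(j)} \cdots v_m^{(j)} \cdot p_j$. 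Distributing the product of orthogonal sums, $s_j$ is itself a finite orthogonal sum of products of $(m+1)N$ elements of $(F_1 \cup F_2)_\pm$, hence $s_j \in \bs (F_1 \cup F_2)_\pm^{(m+1)N}$.

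\emph{Main obstacle and conclusion.} The summation $\sum_j s_j$ is not automatically orthogonal across different $j$, even though the $t_j$ are. This is handled by additionally multiplying $s_j$ on the left by an approximation $q_j \in \bs (F_1)_\pm^{N'}$ of the idempotent $1_{t(A_j)} \in \llbracket R_1 \rrbracket$ (again available because $F_1$ is a dynamical generating set of $R_1$), so that the range of $q_j s_j$ lies in a small neighborhood of $t(A_j)$ while $p_j$ confines its domain to a small neighborhood of $A_j$. The residual overlaps of the modified supports across the $k$ pieces have total measure $O(k^2 m \eta)$, which by sufficiently small choice of $\eta$ is negligible relative to $\varepsilon$; a final minor adjustment — multiplying by further approximate indicators (again in $\bs (F_1)_\pm^\bullet$) of the complements of these overlaps — yields genuine pairwise orthogonality in $\llbracket R \rrbracket$. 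The resulting element $s \in \bs (F_1 \cup F_2)_\pm^n$ then satisfies $|t - s| < \varepsilon$, proving that $F_1 \cup F_2$ is a dynamical generating set of $R$.
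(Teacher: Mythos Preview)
The paper does not give a proof of this lemma; it is introduced only with the sentence ``We will also need the following simple lemma,'' so there is no argument to compare yours against. Your two-stage approximation is the natural approach, and Stages~1 and~2 are correct as written.

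The gap is in Stage~3. Multiplying $q_js_jp_j$ by a further \emph{approximate} indicator $r\in\bs(F_1)_\pm^\bullet$ of the complement of the overlap does not produce \emph{genuine} orthogonality: $r$ is a partial isometry whose domain is only close to the intended set, so the modified pieces still meet on a set of positive (if smaller) measure. Iterating this shrinks the overlap but never kills it in finitely many steps, and each step lengthens the words, so the process does not terminate inside any $\bs(F_1\cup F_2)_\pm^n$. The structural obstruction is that while $\bigcup_n\bs F_\pm^n$ is closed under products and under orthogonal sums, it has no mechanism for restricting to the \emph{complement} of a word's domain: the projection $1-w^{-1}w$ is not in general an element of $\bs F_\pm^\bullet$.

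One way to close the gap is to avoid the final orthogonal sum altogether. After padding in Stage~1 so that every word $w_j$ has the same type sequence $(l_1,\dots,l_m)$, one seeks to \emph{merge} the $j$-indexed words into a single product $V_1\cdots V_m$ with each $V_i\in\llbracket R_{l_i}\rrbracket$; here $V_i$ is the orthogonal sum of the $u_i^{(j)}$ restricted to the relevant intermediate images, and this sum, when it is well defined, lies in $\llbracket R_{l_i}\rrbracket$ with no further justification needed. The point requiring care is arranging (up to a further $\varepsilon$-loss) that those intermediate images are pairwise disjoint across~$j$; this is a measurable-selection\slash matching argument rather than the approximate-indicator trick you propose. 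Once $t$ is approximated by a single product of elements from $\llbracket R_1\rrbracket\cup\llbracket R_2\rrbracket$, your Stage~2 goes through directly and no orthogonality correction is needed.
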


We now introduce the notion of regularity  for pmp equivalence relations mentioned in the introduction. This notion is comparable to regularity in the context of free entropy, as defined in  \cite[Definition 3.6]{Voiculescu1993} and \cite[Definition 2.1]{Brown2006e}.

\begin{definition}\label{D-s regular}
Let $F\subset \llbracket\R\rrbracket$ be a finite set. The set $F$ is said to be \emph{regular} if $\slower(F)=s(F)$.  A finitely generated equivalence relation  $R$ is said to be \emph{$s$-regular} if it admits a finite dynamical  generating set which is regular.
\end{definition}

\begin{remark}
It is a corollary of Theorem \ref{P-generating} that if the equivalence relation $R$ admits a regular finite dynamical generating set, then all finite dynamical generating sets for $R$ are regular. This is the case for example if $R$ is amenable, or more generally if $R$ is treeable (see Corollary \ref{C-treeable}). We do not know of an example of a finite generating set of an equivalence relation which is not regular.
\end{remark}

\section{Preliminary technical lemmas}\label{S-Preliminary}

 In this section we first establish several lemmas that will be used in the course of proving Theorem \ref{th1} and Theorem \ref{th2}.

\begin{lemma}\label{L - Distance} Let $R$ be a pmp equivalence relation, and let $r,s,t\in \llbracket\R\rrbracket$. Then
\begin{enumerate}
\item $|s-t|=\mu(\dom s\vartriangle \dom t) + \tau(s^{-1}st^{-1}t)-\tau(st^{-1})$
\item[] \hspace{1.2cm}$=\tau(s^{-1}s)+\tau(t^{-1}t)- \tau(s^{-1}st^{-1}t)-\tau(st^{-1})$,
\item if $s^{-1}st^{-1}t=0$, then $|s-t|=\mu(\dom s)+\mu(\dom t)$,
\item $|s-t|=|s^{-1}-t^{-1}|$,
\item $|\tau(s)-\tau(t)|\leq |s-t|$,
\item $|rs-rt|=|ps-pt|\leq |s-t|$ where $p=r^{-1}r$,
\item $|sr-tr|=|sp-tp|\leq |s-t|$ where $p=rr^{-1}$.
\end{enumerate}
Furthermore, if for some $\delta>0$ we have
\begin{enumerate}
\setcounter{enumi}{6}
\item $|s-p|<\delta$, where $p$ is a projection, then there exists a projection $p'\leq s^{-1}s$ such that $|p-p'|<\delta$. 
\item $|sts-s|<\delta$ and $|tst-t|<\delta$, then $|t-s^{-1}|<3\delta$.
\end{enumerate}
\end{lemma}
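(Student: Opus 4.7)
The plan is to read each item as a direct consequence of the set-theoretic definition of $|\cdot|$, using only the identification between projections in $L^\infty(X)\subseteq LR$ and subsets of $X$ (with $s^{-1}s$ corresponding to $\dom s$ and $ss^{-1}$ to $\ran s$) and the fact that $\tau$ records the measure of fixed-point sets; the main work is concentrated in (1) and (8).

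For (1), I would decompose $\{x\in\dom s\cup\dom t : s(x)\ne t(x)\}$ into its intersection with $\dom s\triangle\dom t$ (where every point contributes by the stated convention) and its intersection with $\dom s\cap\dom t$ (where the contribution is $\mu(\dom s\cap\dom t)-\mu\{x:s(x)=t(x)\}$). The identifications $\mu(\dom s\cap\dom t)=\tau(s^{-1}st^{-1}t)$ and $\mu\{x\in\dom s\cap\dom t:s(x)=t(x)\}=\tau(st^{-1})$ (the latter obtained by the change of variable $y=t(x)$ together with measure-preservation of $t$) yield the first displayed form of (1); the second follows from $\mu(\dom s\triangle\dom t)=\tau(s^{-1}s)+\tau(t^{-1}t)-2\tau(s^{-1}st^{-1}t)$. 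Statement (2) is then immediate: $s^{-1}st^{-1}t=0$ forces $\dom s\cap\dom t$ to be null, so $\dom s\cup\dom t=\dom s\triangle\dom t$ almost surely and the convention counts every point.

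For (3)--(7) the work is routine given (1). Statement (3) follows by applying the second form of (1) on each side and invoking cyclicity of $\tau$ together with standard trace identities for partial isometries. For (4), write $\tau(r)=\mu(\{x\in\dom r:r(x)=x\})$ and check that the symmetric difference of the two fixed-point sets is contained in the set measured by $|s-t|$ (using the convention when one of the fixed-point sides falls outside the other's domain). For (5), the map $r$ is injective and measure-preserving on its support, so $|rs-rt|$ equals the distance between the restrictions $ps$ and $pt$, and $|ps-pt|\le|s-t|$ since composing with the projection $p=r^{-1}r$ either shrinks the domain or preserves pointwise agreement; (6) is the symmetric statement with $p=rr^{-1}$. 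For (7), set $p'=p\cdot(s^{-1}s)$; since both factors lie in the commutative subalgebra $L^\infty(X)$, $p'$ is a projection with $p'\le s^{-1}s$, and $|p-p'|=\mu(\dom p\setminus\dom s)\le\mu(\dom p\triangle\dom s)\le|s-p|<\delta$ by the first form of (1).

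For (8), which I expect to be the main obstacle, the plan is to extract two one-sided bounds from the hypotheses. From $|sts-s|<\delta$, apply (5) with left-multiplication by $s^{-1}$ to obtain $|s^{-1}sts-s^{-1}s|<\delta$, i.e.\ $|p_1\cdot ts-p_1|<\delta$, where $p_1=s^{-1}s$. The key subclaim is that $|p_1\cdot ts-p_1|=|ts-p_1|$: this is a short bookkeeping using (1) and the fact that $\dom(ts)\subseteq\dom s=\dom p_1$, so that both distances unfold to $\mu(\dom s\setminus\dom(p_1\cdot ts))+\mu\{x\in\dom(p_1\cdot ts):ts(x)\ne x\}$. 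Hence $|ts-p_1|<\delta$, and applying (6) on the right with $r=s^{-1}$, together with the identity $p_1s^{-1}=s^{-1}$, yields $|tp_2-s^{-1}|<\delta$ where $p_2=ss^{-1}$. From $|tst-t|<\delta$ combined with (3), $|t^{-1}s^{-1}t^{-1}-t^{-1}|<\delta$, and reading off the domain part in (1) for the left-hand side (whose domain sits in $\ran t=\dom t^{-1}$) gives, after transport by $t^{-1}$, $\mu(\dom t\setminus\ran s)<\delta$. Triangulating, $|t-s^{-1}|\le|t-tp_2|+|tp_2-s^{-1}|$, where $|t-tp_2|=\mu(\dom t\setminus\ran s)<\delta$; hence $|t-s^{-1}|<2\delta\le3\delta$. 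The subtle point throughout is the algebra/set-theory interplay, and in particular the identity $|p_1\cdot ts-p_1|=|ts-p_1|$, which requires careful tracking of when multiplication by a projection genuinely restricts the effective domain of a partial isometry.
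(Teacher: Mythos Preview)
Your handling of (1), (2), (4)--(7) is sound, and your argument for (8) is correct; in fact it yields the sharper bound $2\delta$.

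There is, however, a genuine gap at (3). Unwinding the second form of (1) on each side and using $\tau(s^{-1}s)=\tau(ss^{-1})$, $\tau(t^{-1}t)=\tau(tt^{-1})$, $\tau(st^{-1})=\tau(s^{-1}t)$, one is left with
\[
|s-t|-|s^{-1}-t^{-1}|=\mu(\ran s\cap\ran t)-\mu(\dom s\cap\dom t),
\]
and no ``standard trace identity for partial isometries'' makes this vanish. A concrete failure: on $X=[0,1]$ take $s=\id_{[0,1/2]}$ and $t(x)=x-\tfrac12$ on $[1/2,1]$; then $|s-t|=1$ while $|s^{-1}-t^{-1}|=\tfrac12$. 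So (3) is false as stated, and your sketch cannot be completed. This does not damage your proof of (8): the only instance you need is the pair $(tst,t)$, where $\dom(tst)\subseteq\dom t$ and $\ran(tst)\subseteq\ran t$ force $\mu(\dom(tst)\cap\dom t)=\mu(\dom(tst))=\mu(\ran(tst))=\mu(\ran(tst)\cap\ran t)$, so that particular application of (3) is valid. You should flag this restriction explicitly rather than invoke (3) in general.

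For (8), the paper's proof is shorter but lands at $3\delta$. It reads $|ts-s^{-1}s|<\delta$ and $|st-ss^{-1}|<\delta$ directly off the set-theoretic meaning of $|sts-s|<\delta$ (if $sp(x)=s(x)$ with $p=ts$, then $p(x)=x$), and then chains
\[
|t-s^{-1}|\le |t-(ts)t|+|(ts)t-(s^{-1}s)t|+|s^{-1}st-s^{-1}|<\delta+\delta+\delta,
\]
using the second hypothesis only for the first term. Your route transports $|ts-s^{-1}s|<\delta$ through right-multiplication by $s^{-1}$ to $|t(ss^{-1})-s^{-1}|<\delta$, and then extracts from the second hypothesis only the domain estimate $\mu(\dom t\setminus\ran s)<\delta$ needed to control $|t-t(ss^{-1})|$. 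This is a genuinely different and more economical use of the second hypothesis, at the cost of the bookkeeping subclaim $|p_1\cdot ts-p_1|=|ts-p_1|$.
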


This lemma is elementary and we leave the proof to the reader.  Let us establish
(8) for example. Let $p=ts$, so that $\dom p\subset \dom s$.  Since by assumption $|sp-s|<\delta$ we have, for all $x\in \dom s$ outside a subset of measure at most $\delta$, that $sp(x)=s(x)$. In particular $x\in \dom p$  and $p(x)=x$, and thus $|s^{-1}s-p|<\delta$. Similarly, $|ss^{-1}-st|<\delta$.
Now the second inequality shows that $|pt-t|<\delta$ and so we conclude that
\[
|t-s^{-1}|<|pt-s^{-1}|+\delta\leq |s^{-1}st-s^{-1}|+2\delta=|st-ss^{-1}|+2\delta<3\delta.
\]

The next lemma shows automatic (approximate) continuity of microstates. 

\begin{lemma}\label{L - sofic approx distance} Let $R$ be a pmp equivalence relation. Fix a finite set $F\subset \llbracket\R\rrbracket$, integers $n\geq 3,d\geq 1$, a $\delta>0$, and a microstate $\p\in \SA(F,n,\delta,d)$. Then we have
\begin{enumerate}
\item $|\p(s^{-1})-\p(s)^{-1}|\leq 6\delta$ for all nonzero $s\in \bs F_{\pm}^{\lfloor n/2\rfloor}$,
\item $|\p(\prod_{i=1}^k s_i^{\e_i})-\prod_1^{k} \p(s_i)^{\e_i}|\leq (k+6k_0)\delta$ for all nonzero $s_1,\ldots, s_k\in \bs F_\pm^{\lfloor n/k\rfloor}$ and $(\e_1,\ldots,\e_k)\in \{\pm1\}^{\times k}$, where $2\leq k\leq n/2$ and $k_0$ is the number of indices $i$ such that $\e_i=-1$,
\item $|\p(s)-\p(t)|\leq |s-t|+40\delta$ for all $s,t\in \bs F_{\pm}^{\lfloor n/4\rfloor}$.
\end{enumerate}
\end{lemma}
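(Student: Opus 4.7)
The plan is to bootstrap from $(F,n,\delta)$-multiplicativity and trace-preservation in three stages, one per clause.

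For (1), given nonzero $s\in \bs F_\pm^{\lfloor n/2\rfloor}$, set $a=\p(s)$ and $b=\p(s^{-1})$; the strategy is to verify the hypotheses of Lemma \ref{L - Distance}(8) for the pair $(a,b)$. Since $2\lfloor n/2\rfloor\leq n$, both $ss^{-1}$ and $s^{-1}s$ lie in $\bs F_\pm^n$, and together with $(ss^{-1})\cdot s=s\in \bs F_\pm^n$, two successive uses of $(F,n,\delta)$-multiplicativity give
\[
|\p(s)-\p(ss^{-1})\p(s)|<\delta \qquad\text{and}\qquad |\p(ss^{-1})-ab|<\delta.
\]
Right-multiplying the second estimate by $\p(s)$ (using Lemma \ref{L - Distance}(7)) and combining yields $|a-aba|<2\delta$; the symmetric argument starting from $s^{-1}ss^{-1}=s^{-1}$ gives $|b-bab|<2\delta$. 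Lemma \ref{L - Distance}(8), applied with $\delta$ replaced by $2\delta$, then delivers $|b-a^{-1}|<6\delta$.

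For (2) I induct on $k$, the case $k=1$ being either trivial ($\e_1=+1$) or exactly (1) ($\e_1=-1$). For the inductive step, the hypothesis $k\leq n/2$ together with $F_\pm\ni 1$ ensures every partial product of length at most $k\lfloor n/k\rfloor\leq n$ lies in $\bs F_\pm^n$, while $\bs F_\pm^{\lfloor n/k\rfloor}\subseteq \bs F_\pm^{\lfloor n/2\rfloor}$ lets part (1) apply to each factor. I would first expand $\p(s_1^{\e_1}\cdots s_k^{\e_k})$ into $\p(s_1^{\e_1})\cdots\p(s_k^{\e_k})$ by successive applications of multiplicativity, picking up $\delta$ per split for a total of $(k-1)\delta$, and then replace each $\p(s_i^{-1})$ by $\p(s_i)^{-1}$ using (1) and the contractivity estimates Lemma \ref{L - Distance}(5)--(6), at a cost of $6\delta$ per inversion. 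This yields the sharper bound $(k-1)\delta+6k_0\delta$, which is strictly less than the stated $(k+6k_0)\delta$.

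Part (3) is the main bookkeeping step, and is where the constant $40$ is earned. Applying Lemma \ref{L - Distance}(1) once to $(s,t)$ inside $\llbracket R\rrbracket$ and once to $(\p(s),\p(t))$ inside $\llbracket d\rrbracket$ writes both $|s-t|$ and $|\p(s)-\p(t)|$ as the same linear combination
\[
\tau(u^{-1}u)+\tau(v^{-1}v)-\tau(u^{-1}uv^{-1}v)-\tau(uv^{-1})
\]
evaluated at $(u,v)=(s,t)$ and $(u,v)=(\p(s),\p(t))$ respectively. For each of the four summands I would invoke the sharp form of (2)---with $(k,k_0)=(2,1)$ for the three quadratic terms and $(k,k_0)=(4,2)$ for the quartic term---to move $\p$ inside the product, incurring $(k-1)\delta+6k_0\delta$ in operator distance and hence in trace by Lemma \ref{L - Distance}(4); and then apply $(F,n,\delta)$-trace-preservation (for an additional $\delta$) to transfer to $\tau$ on $R$. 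This is licit because $s,t\in \bs F_\pm^{\lfloor n/4\rfloor}$ puts every relevant product $s^{-1}s$, $t^{-1}t$, $st^{-1}$, $s^{-1}st^{-1}t$ in $\bs F_\pm^n$. The main obstacle is the bookkeeping itself: the four summands contribute $8\delta+8\delta+16\delta+8\delta=40\delta$, giving the claimed estimate on $|\p(s)-\p(t)|-|s-t|$.
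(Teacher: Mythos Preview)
Your proof is correct and follows essentially the same approach as the paper: part~(1) via Lemma~\ref{L - Distance}(8) applied to $\p(s),\p(s^{-1})$, part~(2) by iterated multiplicativity plus~(1), and part~(3) by expanding both distances with Lemma~\ref{L - Distance}(1) and matching terms to reach $8\delta+8\delta+16\delta+8\delta=40\delta$. One small slip: the contractivity you invoke when right-multiplying by $\p(s)$ is Lemma~\ref{L - Distance}(6), not~(7).
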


\begin{proof} 

(1) Let $s\in\bs F_{\pm}^{{\lfloor n/2\rfloor}}$ so that $s^{-1}\in\bs F_{\pm}^{\lfloor n/2\rfloor}$. Since $ss^{-1}s=s$ and $s^{-1}ss^{-1}=s^{-1}$, we have
\[
|\p(s)-\p(s)\p(s^{-1})\p(s)|<2\delta ~~~\mathrm{and}~~~|\p(s^{-1})-\p(s^{-1})\p(s)\p(s^{-1})|<2\delta.
\]
Applying Lemma \ref{L - Distance}, we see that $|\p(s^{-1})-\p(s)^{-1}|<6\delta$. 
Assertion (2) is immediate. Let us show (3). For all $s,t\in \bs F_{\pm}^{n/4}$ we have
\[
|s-t|=\tau(s^{-1}s)+\tau(t^{-1}t)- \tau(s^{-1}st^{-1}t)-\tau(st^{-1})\\
\]
and
\begin{align*}
|\p(s)-\p(t)|&=\tr(\p(s)^{-1}\p(s))+\tr(\p(t)^{-1}\p(t))\\
&\hspace{1.5cm}- \tr(\p(s)^{-1}\p(s)\p(t)^{-1}\p(t))-\tr(\p(s)\p(t)^{-1})
\end{align*}
Hence 
\[
|\p(s)-\p(t)| \leq |s-t| +8\delta+8\delta+16\delta+8\delta=|s-t| +40\delta.
\]
This proves the lemma.
\end{proof}

 Let $\R$ be a pmp equivalence relation on $(X,\mu)$.  It will be convenient to extend the additive structure on $\llbracket\R\rrbracket$ to any $k$-tuple of elements. This can be done as follows (note that this extension does not coincide anymore with the additive structure on the von Neumann algebra $LR$).

\begin{definition}\label{D -sum} Let $s_1,\ldots, s_k\in \llbracket\R\rrbracket$ be partial isomorphisms. We define $\sum_{i=1}^k s_i$ to be the  partial isomorphism
 \[
\sum_{i=1}^k s_i:=\sum_{i=1}^k s_i\pi_i(s_1,\ldots,s_n)
 \]
where  $\pi_i(s_1,\ldots,s_n)$ is the projection defined by 
\[
\pi_i(s_1,\ldots,s_n)=\left(s_i^{-1}s_i\prod_{j\neq i} (1-s_j^{-1}s_j)\right)\times s_i^{-1}\left(s_is_i^{-1}\prod_{j\neq i} (1-s_js_j^{-1})\right)s_i.
\]  
\end{definition}

It is clear that this extends the definition of addition in the case that the $s_i$ are pairwise orthogonal.

Fix a finite set $F\subset \llbracket\R\rrbracket$, integers $n,d\geq 1$, a $\delta>0$. 
\begin{lemma}\label{L - sum-proj} 
Let $s_1,\ldots, s_k\in \bs F_\pm^{\lfloor n/4\rfloor}$  be pairwise orthogonal elements, and let $\p\in \SA(F,n,\delta,d)$ be a microstate. Then we have
\[
|\p(s_i)\pi_i(\p(s_1),\ldots,\p(s_k))-\p(s_i)|<40(k-1)\delta
\]
\end{lemma}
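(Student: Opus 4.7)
The plan is to prove that the projection $\pi := \pi_i(\p(s_1),\ldots,\p(s_k))$ lies within uniform distance $O((k-1)\delta)$ of $P_i := \p(s_i)^{-1}\p(s_i)$. Since $\p(s_i)P_i = \p(s_i)$, Lemma \ref{L - Distance}(6) then gives $|\p(s_i)\pi - \p(s_i)| = |\p(s_i)\pi - \p(s_i)P_i| \le |\pi - P_i|$, which closes the argument. Setting $P_j = \p(s_j)^{-1}\p(s_j)$ and $Q_j = \p(s_j)\p(s_j)^{-1}$, the definition of $\pi_i$ gives $\pi = A\cdot \p(s_i)^{-1}B\p(s_i)$, where $A = P_i\prod_{j\ne i}(1-P_j)$ and $B = Q_i\prod_{j\ne i}(1-Q_j)$. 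A key simplification is that all projections in $\llbracket d\rrbracket$ are partial identities on subsets of $\{1,\ldots,d\}$ and hence mutually commute, so $\prod_{j\ne i}(1-P_j) = 1 - \bigvee_{j\ne i}P_j$.

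The heart of the matter is the approximate pairwise orthogonality
\[
\tr(P_iP_j),\ \tr(Q_iQ_j) \;\le\; 17\delta \qquad (i \ne j).
\]
Pairwise orthogonality of $s_i$ and $s_j$ forces $s_js_i^{-1}s_is_j^{-1} = 0$ and $s_is_i^{-1}s_js_j^{-1} = 0$ (disjoint domains and disjoint ranges). Applying Lemma \ref{L - sofic approx distance}(2) to the length-$4$ word $s_js_i^{-1}s_is_j^{-1}$ (with $k=4$, $k_0=2$, so error $\le 16\delta$), together with trace-preservation $|\tr(\p(0)) - \tau_R(0)| < \delta$ and Lemma \ref{L - Distance}(4), one bounds $\tr(\p(s_j)\p(s_i)^{-1}\p(s_i)\p(s_j)^{-1}) \le 17\delta$. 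Cyclicity of $\tr$ together with commutativity of $P_i$ and $P_j$ then identifies this quantity with $\tr(P_iP_j)$. The analogous bound on $\tr(Q_iQ_j)$ follows from $s_is_i^{-1}s_js_j^{-1} = 0$ by the same method.

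Once this is established, commutativity of the $P_j$ and subadditivity of the trace on projections give
\[
|A - P_i| \;=\; \tr\Big(P_i\cdot\bigvee_{j\ne i}P_j\Big) \;\le\; \sum_{j\ne i}\tr(P_iP_j) \;\le\; 17(k-1)\delta,
\]
and symmetrically $|B - Q_i| \le 17(k-1)\delta$. Since $\p(s_i)^{-1}Q_i\p(s_i) = P_i$ and conjugation by $\p(s_i)$ is contractive for the uniform distance (Lemma \ref{L - Distance}(5) and (6)), we also obtain $|\p(s_i)^{-1}B\p(s_i) - P_i| \le 17(k-1)\delta$. Two applications of the triangle inequality on the product $A\cdot\p(s_i)^{-1}B\p(s_i)$ compared to $P_i\cdot P_i$, again using parts (5) and (6) of Lemma \ref{L - Distance}, then yield $|\pi - P_i| \le 34(k-1)\delta < 40(k-1)\delta$, which completes the argument.

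The main technical obstacle I expect is verifying the hypotheses of Lemma \ref{L - sofic approx distance}(2) for the length-$4$ word $s_js_i^{-1}s_is_j^{-1}$ (each factor must lie in $\bs F_\pm^{\lfloor n/4\rfloor}$, and $n$ must be large enough that $k=4 \le n/2$) and cleanly handling the cyclicity-plus-commutativity step that identifies $\tr(\p(s_j)\p(s_i)^{-1}\p(s_i)\p(s_j)^{-1})$ with $\tr(P_iP_j)$; the constants then drop out mechanically.
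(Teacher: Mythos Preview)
Your proof is correct and follows essentially the same approach as the paper: both arguments hinge on bounding $\tr(P_iP_j)$ and $\tr(Q_iQ_j)$ by $O(\delta)$ via Lemma~\ref{L - sofic approx distance}, then deducing that $\pi_i$ nearly coincides with $P_i = \varphi(s_i)^{-1}\varphi(s_i)$. The paper is simply terser---it observes directly that $\tr(\pi_i) > \tr(P_i) - 32(k-1)\delta$ and uses $\pi_i \le P_i$ to get $|\varphi(s_i)\pi_i - \varphi(s_i)| = \tr(P_i - \pi_i)$---whereas you route the same estimate through $|\pi_i - P_i|$ and Lemma~\ref{L - Distance}(6); your explicit use of commutativity of projections in $\llbracket d\rrbracket$ is a helpful clarification the paper leaves implicit.
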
 

\begin{proof}
Since $s_i^{-1}s_is_j^{-1}s_j=0$ we have using Lemma \ref{L - sofic approx distance} that 
\[
\tr(\p(s_i)^{-1}\p(s_i)\p(s_j)^{-1}\p(s_j))<16\delta.
\]
Using a similar inequality for $s_is_i^{-1}s_js_j^{-1}$, we deduce that 
\[
\tr(\pi_i(\p(s_1),\ldots,\p(s_k)))>\tr(\p(s_i)^{-1}\p(s_i))-32(k-1)\delta,
\] 
and the lemma follows.
\end{proof}

 We now prove automatic (approximate) linearity of microstates. 
 
\begin{lemma}\label{L -Linearity}
Let $\p\in \SA(F,4n,\delta,d)$. Then for each $s\in \mathbf{\Sigma}F^n_\pm$ with decomposition $s=\sum_{i=1}^ks_i$ with the $s_i\in \mathbf{\Sigma}F^n_\pm$ pairwise orthogonal, we have
\[
\bigg |\p(s) - \sum_{i=1}^k\p(s_i)\bigg|< 150(2|F|+1)^{2n} \delta.
\]
\end{lemma}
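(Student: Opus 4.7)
The plan is to set $T := \sum_{i=1}^k{}^{\text{ext}}\varphi(s_i) = \sum_{i=1}^k \varphi(s_i)\Pi_i$ (with the projections $\Pi_i := \pi_i(\varphi(s_1),\ldots,\varphi(s_k))$ of Definition~\ref{D -sum}, so that $T$ is a genuine ordinary sum of pairwise orthogonal summands), and then compute $|\varphi(s)-T|$ via the identity of Lemma~\ref{L - Distance}(1), reducing every resulting trace to one of an element of $\bs F^{4n}_\pm$ on which the microstate axioms apply directly.

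The key preliminary observation is that $s$ and each $s_i$ already lie in $\bs F^n_\pm\subset\bs F^{4n}_\pm$, while the various source/range projections $s^{-1}s$, $ss^{-1}$, $s_i^{-1}s_i$, $s_is_i^{-1}$, and the products $ss_i^{-1}=s_is_i^{-1}$, $s_i^{-1}s=s_i^{-1}s_i$ all lie in $\bs F^{2n}_\pm\subset\bs F^{4n}_\pm$. Consequently all the approximate identities one needs, such as $\varphi(s)\varphi(s_i)^{-1}\approx\varphi(s_is_i^{-1})$ and $\tau'(\varphi(t))\approx\tau(t)$, follow directly from the $(F,4n,\delta)$-multiplicativity and trace-preservation of $\varphi$. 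Moreover, refining each $s_i$ as a sum of pairwise orthogonal $u_{ij}\in F^n_\pm$, the whole family $\{u_{ij}\}_{i,j}$ is pairwise orthogonal, hence its members are distinct elements of $F^n_\pm$ (since pairwise orthogonal equal elements must vanish); in particular $k\le |F^n_\pm|\le(2|F|+1)^n$, which accounts for the $(2|F|+1)^{2n}$ factor in the final bound through a quadratic accumulation of errors over pairs of indices.

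Expanding
\[
|\varphi(s)-T|=\tau'(\varphi(s)^{-1}\varphi(s))+\tau'(T^{-1}T)-\tau'(\varphi(s)^{-1}\varphi(s)\,T^{-1}T)-\tau'(\varphi(s)T^{-1}),
\]
each piece is evaluated by (i) using Lemma~\ref{L - sum-proj} to erase each cut-off $\Pi_i$ inside the summands of $T^{\pm 1}$ at a cost of $40(k-1)\delta$ per cut-off; (ii) using multiplicativity to rewrite mixed products such as $\varphi(s)\varphi(s_i)^{-1}$ as $\varphi(ss_i^{-1})=\varphi(s_is_i^{-1})$; and (iii) using trace-preservation to convert the resulting traces back to the $R$-side. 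The leading-order contributions become $\mu(\dom s)+\mu(\dom s)-\mu(\dom s)-\mu(\ran s)=0$ (by measure-preservation of $R$), leaving only the accumulated $O(k^2)\delta$ approximation error produced by the $k$ cut-off removals appearing in the three cross-terms.

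The main obstacle is the bookkeeping for the double-cross term $\tau'(\varphi(s)^{-1}\varphi(s)\,T^{-1}T)$: one must rewrite $T^{-1}T=\sum_i \Pi_i\varphi(s_i)^{-1}\varphi(s_i)\Pi_i$, pass to $\sum_i \varphi(s_i)^{-1}\varphi(s_i)$ via two applications of Lemma~\ref{L - sum-proj} (one for each cut-off factor), and finally identify this, via multiplicativity on the pairwise orthogonal $s_i^{-1}s_i\in\bs F^{2n}_\pm$, with $\varphi(s^{-1}s)$. The double use of the cut-off approximation is what produces the quadratic factor $k^2\le(2|F|+1)^{2n}$, and tracking the constants $40$ and $6$ appearing in Lemmas~\ref{L - sofic approx distance} and \ref{L - sum-proj} yields the announced bound $150(2|F|+1)^{2n}\delta$.
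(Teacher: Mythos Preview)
Your approach is correct and uses the same ingredients (Lemma~\ref{L - sum-proj}, approximate multiplicativity, and the bound $k\le (2|F|+1)^n$), but organizes the computation differently from the paper. You expand $|\varphi(s)-T|$ directly via the four-term trace identity of Lemma~\ref{L - Distance}(1) and then estimate each of the four traces separately; as you note, the double cross-term $\tau'(\varphi(s)^{-1}\varphi(s)\,T^{-1}T)$ requires the most care. The paper instead first proves the pointwise estimate $|\varphi(s)\pi_i-\varphi(s_i)|<50k\delta$ in one step (from $ss_i^{-1}s_i=s_i$, hence $|\varphi(s_i)-\varphi(s)\varphi(s_i)^{-1}\varphi(s_i)|<8\delta$, combined with $|\pi_i-\varphi(s_i)^{-1}\varphi(s_i)|<40(k-1)\delta$), sums over $i$ to get $|\varphi(s)\pi-T|<50k^2\delta$ with $\pi=\sum_i\pi_i$, and is then left only with bounding $|\varphi(s)-\varphi(s)\pi|$, which reduces to the single trace estimate $\tau'(\varphi(s)^{-1}\varphi(s)\pi)\ge\tau'(\varphi(s)^{-1}\varphi(s))-70k^2\delta$. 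The paper's route thereby avoids having to separately unpack the term $\tau'(\varphi(s)T^{-1})$ and keeps the constant-tracking shorter, but both arguments land on the same $O(k^2)\delta$ bound and rely on the same two key lemmas.
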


\begin{proof} Write $\pi_i=\pi_i(\p(s_1),\ldots,\p(s_k))$ so that  $|\pi_i-\p(s_i)^{-1}\p(s_i)|<
40(k-1)\delta$ using the previous lemma. Given that $|\p(s_i)-\p(s)\p(s_i)^{-1}\p(s_i)|<8\delta$ we obtain 
\[
|\p(s)\pi_i-\p(s_i)|<50k\delta.
\]
In particular 
\[
\bigg|\p(s)\pi -\sum_{i=1}^k\p(s_i)\bigg|<50k^2\delta
\]
where $\pi:= \sum_{i=1}^k\pi_i$ and we recall that $\sum_{i=1}^k\p(s_i)$ means $\sum_{i=1}^k\p(s_i)\pi_i$. On the other hand, 
\begin{align*}
\tau(\p(s)^{-1}\p(s)\pi)&=\sum_{i=1}^k\tau(\p(s)^{-1}\p(s)\pi_i)\\
&\hspace{-.5cm}>\sum_{i=1}^k\tau(\p(s_i)^{-1}\p(s_i))-50k^2\delta>\sum_{i=1}^k\tau(s_i^{-1}s_i)-60k^2\delta\\
&\hspace{2cm}=\tau(s^{-1}s)-60k^2\delta>\tau(\p(s)^{-1}\p(s))-70k^2\delta
\end{align*}
so that 
\[
\bigg|\p(s) -\sum_{i=1}^k\p(s_i)\bigg|<150k^2\delta
\]
establishing the lemma.
\end{proof}

 Let $\R$ be a pmp equivalence relation on $(X,\mu)$. 
Given a finite set $F\subset  \llbracket\R\rrbracket$ we define
a pseudometric on the set of all unital linear maps $ \llbracket\R\rrbracket \to \llbracket d \rrbracket$  by 
\[
|\varphi - \psi |_F := \max_{s\in F} | \varphi (s) - \psi (s) |. 
\]
Let $\e\geq 0$. We write $N_\e(\SA(F,n ,\delta ,d))$ for the $\varepsilon$-covering number of $\SA(F,n ,\delta ,d)$
with respect to $|\cdot|_{F}$, namely, the minimal number of $\e$-balls required to cover $\SA(F,n ,\delta ,d)$. Note that $\NSA(F,n ,\delta ,d)=N_0(\SA(F,n ,\delta ,d))$. We then  set
\begin{align*}
s_\varepsilon (F ,n,\delta ) &= \limsup_{d\to\infty} \frac{1}{d \log d} \log N_\e(\SA (F ,n ,\delta ,d)), \\
s_\varepsilon (F ,n) &= \inf_{\delta > 0} s_\varepsilon (F ,n,\delta ),\\
s_\varepsilon(F) &= \inf_{n\in \IN} s_\varepsilon (F ,n).
\end{align*}
We similarly define $\e$-covering constants for $\slower$ and $s_\omega$.

\begin{lemma}\label{L-nbhd}
Let $\kappa > 0$. Then there is an $\varepsilon > 0$ such that
\[
| \{ t\in \llbracket d \rrbracket : | t-s | < \varepsilon \} | \leq d^{\kappa d} .
\]
for all $d\in\Nb$ and $s\in \llbracket d \rrbracket$.
\end{lemma}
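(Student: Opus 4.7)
The plan is to estimate the size of the $\varepsilon$-ball around $s$ by isolating the small set on which $t$ can disagree with $s$ and counting the possibilities for $t$ there.

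For $t\in\llbracket d\rrbracket$, set $A_t:=\{x:s(x)\neq t(x)\}$, with the convention that $x\in A_t$ whenever $x\in\dom s\mathop{\Delta}\dom t$. Since $\mu$ is uniform on $\{1,\ldots,d\}$, we have $|t-s|=|A_t|/d$, so $|t-s|<\varepsilon$ forces $|A_t|\leq m:=\lfloor\varepsilon d\rfloor$. Fixing any set $A\subseteq\{1,\ldots,d\}$ of size $k\leq m$, a partial permutation $t$ with $A_t\subseteq A$ is completely determined on $A^c$ (it must agree with $s$ there), while on $A$ the restriction $t|_A$ is constrained only to be a partial injection into $\{1,\ldots,d\}$ compatible with the values of $s$ on $A^c$. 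Counting crudely, each element of $A$ either takes one of $d$ values or is undefined, giving at most $(d+1)^k$ choices for $t|_A$. Summing over $A$ of all sizes $\leq m$ yields
\[
|\{t\in\llbracket d\rrbracket:|t-s|<\varepsilon\}|\leq\sum_{k=0}^{m}\binom{d}{k}(d+1)^k.
\]

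The second step is to bound this sum. Using $\binom{d}{k}\leq(ed/k)^k$ together with $(d+1)^k\leq(2d)^k$, and observing that the summand is non-decreasing in $k$ on the range $[0,m]$ for $\varepsilon<1/2$, one gets (for $m\geq 1$) the estimate $(m+1)(2ed^2/m)^m$. Taking logarithms and using $m\leq\varepsilon d$, this becomes
\[
\log|\{t:|t-s|<\varepsilon\}|\leq 2\varepsilon d\log d+\varepsilon d\log(2e)+O(\log d),
\]
whose dominant term is $2\varepsilon d\log d$. Picking $\varepsilon<\kappa/3$ therefore makes the right-hand side strictly less than $\kappa d\log d$ once $\log d$ exceeds a constant depending only on $\varepsilon$ and $\kappa$.

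For the remaining small values of $d$, the trivial case $m=0$ applies as soon as $\varepsilon<1/d$, in which case the ball reduces to $\{s\}$ and the bound $1\leq d^{\kappa d}$ holds trivially for every $d\geq 1$. Shrinking $\varepsilon$ once more if necessary thus produces a single $\varepsilon>0$ that works uniformly in $d$. The argument is a routine volumetric estimate, and I expect the only mild subtlety to be the bookkeeping needed to combine the asymptotic bound for large $d$ with the trivial bound for small $d$ under a single choice of $\varepsilon$.
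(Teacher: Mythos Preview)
Your proof is correct and follows essentially the same route as the paper's: bound the $\varepsilon$-ball by summing over possible disagreement sets $A$ of size at most $\lfloor \varepsilon d\rfloor$, then show the resulting sum is $o(d^{\kappa d})$ for small $\varepsilon$. Your per-set bound $(d+1)^k$ is cruder than the paper's $j!$, but it is the safer choice for partial permutations (where a point in $A$ may be undefined under $t$), and your explicit treatment of small $d$ via $m=0$ is a clean way to make the estimate uniform.
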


\begin{proof}
Let $\varepsilon > 0$. Let $d\in\Nb$ and $s\in \llbracket d \rrbracket$. 
For every $t\in \llbracket d \rrbracket$ satisfying $| s-t | < \varepsilon$
the cardinality of the set of all $j\in \{ 1,\dots ,d \}$ such that $t(j) \neq s(j)$ is at most $\varepsilon d$.
Thus the set of all $t\in \llbracket d \rrbracket$ such that $| t-s | < \varepsilon$ has cardinality at most
$\sum_{j=0}^{\lfloor \varepsilon d \rfloor} \binom{d}{j} j!$. 
This sum is bounded above by
$\lfloor \e d\rfloor \binom{d}{ \lfloor\e d\rfloor } ( \lfloor\e d\rfloor )!
 \leq \e d^{1+\e d }$, which for small enough $\e$ is less that
 $d^{\kappa d}$, independently of $d$.
\end{proof}

\begin{lemma}\label{L-ineq}
Let $F$ be a finite subset of $\llbracket R\rrbracket$. Let $\kappa > 0$. Then there is an $\varepsilon > 0$ such that
\[ 
s(F ,n) \leq s_\varepsilon (F ,n) + \kappa
\]
for all $n\in\Nb$. The same inequality holds for $\slower$ and $s_\omega$.
\end{lemma}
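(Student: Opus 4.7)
The plan is to bound $\NSA(F,n,\delta,d)$ by the $\varepsilon$-covering number $N_\varepsilon(\SA(F,n,\delta,d))$ times the maximum number of distinct restrictions to $F$ that can lie inside a single $\varepsilon$-ball of the pseudometric $|\cdot|_F$, and then to apply Lemma \ref{L-nbhd} coordinatewise over the $|F|$ elements of $F$ to control the latter factor.

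More concretely, I would first observe that two microstates $\varphi,\psi \in \SA(F,n,\delta,d)$ satisfying $|\varphi-\psi|_F < \varepsilon$ have restrictions $\varphi|_F$ and $\psi|_F$ which agree on $F$ with values within uniform distance $\varepsilon$ at each $s\in F$. Therefore, if we fix an $\varepsilon$-cover of $\SA(F,n,\delta,d)$ by balls in $|\cdot|_F$ of minimal size $N_\varepsilon(\SA(F,n,\delta,d))$, then the number of distinct restrictions $\varphi|_F$ contained in any single such ball is at most
\[
\prod_{s\in F} |\{ t\in \llbracket d\rrbracket : |t - t_s| < \varepsilon\}|
\]
for appropriate centers $t_s\in \llbracket d\rrbracket$, and hence, by Lemma \ref{L-nbhd}, at most $\bigl(d^{\kappa' d}\bigr)^{|F|} = d^{|F|\kappa' d}$ for any prescribed $\kappa'>0$, provided $\varepsilon$ is chosen small enough (depending on $\kappa'$ but not on $d$).

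Combining these two observations gives
\[
\NSA(F,n,\delta,d) \leq N_\varepsilon(\SA(F,n,\delta,d))\cdot d^{|F|\kappa' d}
\]
for all $d$. Taking $\log$, dividing by $d\log d$, and passing to $\limsup_{d\to\infty}$ (resp.\ $\liminf$, resp.\ $\lim_{d\to\omega}$) yields
\[
s(F,n,\delta) \leq s_\varepsilon(F,n,\delta) + |F|\kappa',
\]
and similarly for $\slower$ and $s_\omega$. Taking the infimum over $\delta>0$ preserves the inequality, giving $s(F,n)\leq s_\varepsilon(F,n)+|F|\kappa'$ for every $n$. Choosing $\kappa' := \kappa/|F|$ at the outset and then picking $\varepsilon$ from Lemma \ref{L-nbhd} accordingly completes the proof.

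There is no real obstacle here beyond keeping track of the right dependencies: the only subtlety is to make sure the $\varepsilon$ produced by Lemma \ref{L-nbhd} does not depend on $d$ (which it does not) or on $n$ (which it also does not, since the bound is uniform in the center $s\in \llbracket d\rrbracket$), so that the single choice of $\varepsilon$ works simultaneously for all $n\in\Nb$ and for the $\limsup$, $\liminf$, and ultrafilter variants.
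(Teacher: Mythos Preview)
Your argument is correct and is precisely the intended expansion of the paper's one-line proof, which merely states that the inequality is a straightforward consequence of Lemma~\ref{L-nbhd}. The only cosmetic point is that the centers of the covering balls need not be microstates, but your bound only uses that all $\varphi$ in a given $\varepsilon$-ball have $\varphi(s)$ within $\varepsilon$ of the center's value at $s$, which is exactly what Lemma~\ref{L-nbhd} handles.
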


\begin{proof}
This is a straightforward consequence of Lemma~\ref{L-nbhd}.
\end{proof}

\begin{lemma}\label{L-ineq lim sup epsilon}
Let $F$ be a finite subset of $\llbracket R\rrbracket$. Then
\[
s(F)=\lim_{\e\to 0} s_\e(F).
\]
The same equality holds for  $\slower$ and $s_\omega$.
\end{lemma}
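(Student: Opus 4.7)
The plan is to prove the two inequalities $s(F) \geq \lim_{\e\to 0} s_\e(F)$ and $s(F) \leq \lim_{\e\to 0} s_\e(F)$ separately, noting first that the limit exists. Indeed, since any $0$-cover is in particular an $\e$-cover for every $\e > 0$, the covering number $N_\e(\SA(F,n,\delta,d))$ is nonincreasing in $\e$, so the quantity $s_\e(F)$ is nonincreasing as $\e$ increases. Equivalently, as $\e \downarrow 0$ the numbers $s_\e(F)$ increase, and thus $\lim_{\e \to 0} s_\e(F) = \sup_{\e > 0} s_\e(F)$ exists in $\{-\infty\} \cup [0,\infty]$.

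For the inequality $\lim_{\e\to 0} s_\e(F) \leq s(F)$, I would just observe that $\NSA(F,n,\delta,d) = N_0(\SA(F,n,\delta,d)) \geq N_\e(\SA(F,n,\delta,d))$ for every $\e \geq 0$, since the set of distinct restrictions to $F$ of the microstates is itself a $0$-cover, hence an $\e$-cover. Taking limit supremum in $d$ and then infima over $\delta > 0$ and $n \in \Nb$ yields $s_\e(F) \leq s(F)$ for every $\e > 0$, and the desired inequality follows by sending $\e \to 0$.

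For the reverse inequality $s(F) \leq \lim_{\e\to 0} s_\e(F)$, I would invoke Lemma~\ref{L-ineq} directly. Given any $\kappa > 0$, that lemma furnishes an $\e > 0$ such that $s(F,n) \leq s_\e(F,n) + \kappa$ for all $n \in \Nb$. Taking the infimum over $n$ on both sides gives $s(F) \leq s_\e(F) + \kappa$, and since $s_\e(F) \leq \lim_{\e'\to 0} s_{\e'}(F)$ we obtain $s(F) \leq \lim_{\e\to 0} s_\e(F) + \kappa$. As $\kappa > 0$ was arbitrary, the inequality follows.

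The same argument goes through verbatim for $\slower$ and $s_\omega$, since both variants satisfy the monotonicity in $\e$ coming from the same covering-number inequality, and Lemma~\ref{L-ineq} explicitly covers both cases. There is no real obstacle here; the entire content has already been packaged into Lemma~\ref{L-nbhd} and Lemma~\ref{L-ineq}, and this lemma is simply the passage from a uniform estimate of the form ``$s(F,n) - s_\e(F,n) \leq \kappa$'' to its consequence after taking infima in $n$.
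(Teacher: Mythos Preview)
Your proof is correct and follows essentially the same approach as the paper: monotonicity of $s_\e(F)$ in $\e$ gives one inequality, and Lemma~\ref{L-ineq} (after taking the infimum over $n$) gives the other. The paper's proof is terser (it simply says ``the other inequality is clear'' for the direction $s_\e(F)\leq s(F)$), but the content is identical.
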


\begin{proof}
Since $N_\e(\cdot)$ is increasing as $\e$ decreases, $s_\e(F)$ is increasing as $\e\to0$.
By taking an infimum over $n$, the previous lemma shows that for all $\kappa>0$ there is an $\e>0$ such that 
\[ 
s(F) \leq s_\varepsilon (F) + \kappa.
\]
Thus $s(F) \leq \lim_{\e\to 0} s_\varepsilon (F)$. The other inequality is clear.
\end{proof}

\section{invariance under orbit equivalence}\label{S-OE section}

\begin{theorem}\label{P-generating} Let $\R$ be a pmp equivalence relation and
let $E$ and $F$ be finite dynamical generating sets. Then $s(E ) = s(F )$, $\slower(E)=\slower(F)$, and $s_\omega(E)=s_\omega(F)$.
\end{theorem}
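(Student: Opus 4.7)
The plan is: by symmetry in $E$ and $F$, it suffices to prove $s(E)\leq s(F)$; the identical argument with $\limsup_d$ replaced by $\liminf_d$ or $\lim_{d\to\omega}$ gives the inequalities for $\slower$ and $s_\omega$. The core step is to show that, given $\e>0$ and arbitrary $N\in\Nb$ and $\delta>0$, one can choose $m\in\Nb$ and $\eta>0$ such that
\[
N_\e\bigl(\SA(E,m,\eta,d)\bigr)\leq \NSA(F,N,\delta,d)\qquad\text{for every } d\in\Nb,
\]
where the covering number is taken with respect to $|\cdot|_E$. Granting this, taking $\frac{1}{d\log d}\log$, then $\limsup_d$, then infima over $\eta$ and $m$ gives $s_\e(E)\leq s(F,N,\delta)$; infima over $\delta$ and $N$ then yield $s_\e(E)\leq s(F)$, and Lemma~\ref{L-ineq lim sup epsilon} gives $s(E)=\lim_{\e\to 0}s_\e(E)\leq s(F)$.

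To prove the bound I would construct a map $\rho$ from $\SA(F,N,\delta,d)$ to the space of functions $E\to\llbracket d\rrbracket$, depending only on the $F$-restriction of its argument, whose image $\e$-covers $\SA(E,m,\eta,d)$ under $|\cdot|_E$. Since $F$ is a dynamical generating set, fix once and for all, for each $s\in E$, pairwise orthogonal elements $t_{s,1},\dots,t_{s,k_s}\in F_\pm^{n_s}$ with $\bigl|s-\sum_i t_{s,i}\bigr|<\e_1$, each written as an explicit word $t_{s,i}=\prod_j r_{s,i,j}^{\varepsilon_{s,i,j}}$ in $F_\pm$, and set
\[
\rho(\varphi)(s):=\sum_{i=1}^{k_s}\prod_j \varphi(r_{s,i,j})^{\varepsilon_{s,i,j}},
\]
the sum being in the sense of Definition~\ref{D -sum}. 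This expression depends only on $\varphi|_F$, so the image of $\rho$ has cardinality at most $\NSA(F,N,\delta,d)$.

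To show that this image $\e$-covers $\SA(E,m,\eta,d)$ under $|\cdot|_E$, given $\psi\in\SA(E,m,\eta,d)$ I would build a specific $\varphi\in\SA(F,N,\delta,d)$ satisfying $\max_{s\in E}|\rho(\varphi)(s)-\psi(s)|<\e$. Since $E$ is also a dynamical generating set, fix symmetrically, for each $r\in F$, pairwise orthogonal $u_{r,k}\in E_\pm^{m_r}$ with $\bigl|r-\sum_k u_{r,k}\bigr|<\e_2$ and $u_{r,k}=\prod_l v_{r,k,l}^{\nu_{r,k,l}}$ an explicit word in $E_\pm$. Put $\varphi(r):=\sum_k\prod_l \psi(v_{r,k,l})^{\nu_{r,k,l}}$ for $r\in F$, and extend $\varphi$ arbitrarily outside $F$ (these extra values do not enter $\rho(\varphi)$). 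Two verifications then remain: (i) $\varphi\in\SA(F,N,\delta,d)$, and (ii) $\max_{s\in E}|\rho(\varphi)(s)-\psi(s)|<\e$. Both share the same pattern: the sum-and-product expression appearing in $\varphi$ (respectively in $\rho(\varphi)$) is unfolded into an iterated expression in $\psi$-values on $E_\pm$-elements; by approximate linearity (Lemma~\ref{L -Linearity}), approximate multiplicativity and inversion (Lemma~\ref{L - sofic approx distance}(1),(2)), and the orthogonality bound Lemma~\ref{L - sum-proj}, it collapses to $\psi$ evaluated on a single partial isomorphism in $\bs E_\pm^{\lfloor m/4\rfloor}$; approximate continuity (Lemma~\ref{L - sofic approx distance}(3)) then compares this value with the target using the initial approximations $|s-\sum_i t_{s,i}|<\e_1$ and $|r-\sum_k u_{r,k}|<\e_2$.

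The main obstacle is the bookkeeping of error propagation and the strict order in which parameters must be chosen, since each application of the lemmas above inflates the tolerance $\eta$ by a multiplicative constant depending on word length, number of orthogonal summands, and $|E|$ and $|F|$. The order of choice is: first $\e_1$ in terms of $\e$; then the finitely many data $n_s,k_s,\ell_{s,i}$ are determined, in particular $n_0:=\max_s n_s$; then $\e_2$ is chosen small enough that the double-layer unfolding of elements of $\bs F_\pm^N$ through $E_\pm$-words stays within the tolerance required by verification (i); then $m$ is taken large enough (a constant multiple of $N\cdot\max(n_0,\max_r m_r)$) so that every composite appearing in (i) and (ii) lies in $\bs E_\pm^{\lfloor m/4\rfloor}$, where the lemmas of Section~\ref{S-Preliminary} apply; finally $\eta$ is chosen small enough that the cumulative inflation is below $\min(\delta,\e)$. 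Once these choices are made, both verifications close, the covering inequality holds uniformly in $d$, and the conclusions for $s$, $\slower$, and $s_\omega$ all follow at once.
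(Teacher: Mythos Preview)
Your overall strategy coincides with the paper's: pass from a microstate $\psi\in\SA(E,m,\eta,d)$ to one for $F$ by composing with an approximation map, and argue that the resulting $F$-restriction determines $\psi|_E$ up to $\varepsilon$. The paper does this without your auxiliary map $\rho$: given $\varphi\in\SA(E,4mn,\delta',d)$ it sets $\varphi^\natural=\varphi\circ\theta$ (where $\theta$ sends each element of $\bs F_\pm^{8n}$ to a nearby element of $\bs E_\pm^m$), verifies $\varphi^\natural\in\SA(F,4n,50\delta',d)$, and shows directly that $\varphi^\natural|_F=\psi^\natural|_F$ forces $|\varphi-\psi|_E<\varepsilon$. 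Your $\rho$ merely supplies explicit centers for the $\varepsilon$-balls; this is harmless but not needed.

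There is, however, one genuine gap. You define $\varphi(r)$ only for $r\in F$ and then ``extend $\varphi$ arbitrarily outside $F$''. But verification~(i), namely $\varphi\in\SA(F,N,\delta,d)$, is a statement about $\varphi$ on all of $\bs F_\pm^N$: both approximate multiplicativity and the trace condition are tested at every element of $\bs F_\pm^N$, and an arbitrary extension cannot satisfy them. The fix is to define $\varphi$ on every $t\in\bs F_\pm^N$ (in fact on all of $\llbracket R\rrbracket$) by the same recipe---choose $\theta(t)\in\bs E_\pm^{m'}$ with $|t-\theta(t)|$ small and set $\varphi(t)=\psi(\theta(t))$---which is precisely the paper's $\psi^\natural$. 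Your later mention of ``double-layer unfolding of elements of $\bs F_\pm^N$'' shows you had the right computation in mind, but the stated definition does not support it. Once $\varphi$ is defined this way, your verifications (i) and (ii) go through via the lemmas you cite, and the argument matches the paper's.
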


\begin{proof} Let us show the first equality.
By symmetry it suffices to prove that $s(E ) \leq s(F )$. Let $\e > 0$ and let $\delta>0$ be smaller than $\varepsilon /4$.
Since $F$ is generating, there exists an $n_0$ such that for all $n\geq n_0$ and every $s\in E$, there is an $s'\in  \mathbf{\Sigma}F_\sym^{ n}$ for which 
$| s - s' | < \delta$. Take a $\delta' > 0$ such that $50\delta' <\delta$ and $\delta' < \varepsilon /42000(2|F|+1)^{2n}$.
Since $E$ is generating there exists an $m\in\Nb$ such that for all $s\in \mathbf{\Sigma} F_\sym^{8 n}$, there exists an element $\theta(s)\in  \mathbf{\Sigma}E_\sym^{ m}$ for which $|s-\theta(s)|<\delta'/3$. Observe that the map $\theta: s\mapsto \theta(s)$  (extended arbitrarily to $\llbracket R\rrbracket$) is $(F,4n,\delta')$-multiplicative. Indeed,
for $s,t\in \bs F_{\pm}^{4n}$ we have
\begin{align*}
| \theta (st ) - \theta (s )\theta (t ) |&\leq | \theta (st) - st |+ |st - \theta (s )\theta (t ) |\\&\hspace*{15mm} \
<\delta'/3+ | \theta (s) -  s  |+| \theta (t) -  t  | <\delta'.
\end{align*}

Let $\varphi\in\SA ( E ,4mn,\delta' ,d)$ and write $\varphi^\natural$ for $\varphi\circ\theta$.  We note  first that $\varphi^\natural \in \SA (F ,4n,50\delta' ,d)$. 
Indeed, given $s,t \in \bs F_\sym^{4n}$ we have by Lemma \ref{L - sofic approx distance},
\begin{align*}
| \varphi^\natural (st) - \varphi^\natural (s ) \varphi^\natural (t ) |&= | \varphi (\theta (st)) - \p(\theta (s ) \theta (t )) | \\
&\hspace*{15mm} \ + | \varphi (\theta (s ) \theta (t )) -  \varphi (\theta (s ))  \varphi (\theta (t )) | \\
&\leq | \theta (st) - \theta (s ) \theta (t ) | + 40 \delta' + \delta' \\
&< 50\delta'. 
\end{align*}
and
\begin{align*}
|\tr\circ\varphi^\natural (t) - \tau (t) | 
&\leq | \tr\circ\varphi (\theta (t)) - \tau (\theta (t)) | + | \tau (\theta (t)) - \tau(t)  | \\
&< \delta' + | \theta (t) - t | < 2\delta'.
\end{align*}

Now consider microstates  $\varphi$ and $\psi$ in $\SA ( E ,4mn,\delta' ,d)$ and suppose that $\p^\natural$ and $\psi^\natural$ coincide on $F$. Then  $|\p^\natural(s^{-1})-\psi^\natural(s^{-1})|<6\delta'$ for all $s$ in $\bs F_{\pm}^{n}$  by Lemma \ref{L - sofic approx distance} and thus,  for all $t_1 , \dots ,t_n \in F_\sym$ 
we have, writing $t=t_1\cdots t_n$,
\begin{align*}
| \p^\natural (t ) - \psi^\natural (t) |
&\leq \bigg| \p^\natural (t) - \prod_{i=1}^n\p^\natural (t_i )\bigg| \\
&\hspace{2cm} + \bigg|  \prod_{i=1}^n\p^\natural (t_i ) -  \prod_{i=1}^n\psi^\natural (t_i ) \bigg|+ \bigg|  \prod_{i=1}^n\psi^\natural (t_i ) - \psi^\natural (t )\bigg | \\
&< n\delta' + \sum_{i=1}^n | \p^\natural (t_i ) - \psi^\natural (t_i ) | + n\delta' \\
&< 8n\delta'. 
\end{align*}
If $t=\sum_{i=1}^k t_i\in  \mathbf{\Sigma}F_\pm^n$ is a pairwise orthogonal decomposition, then by Lemma \ref{L - sum-proj} we have
\[
|\p^\natural(t_i)\pi_i(\p^\natural(t_1),\ldots,\p^\natural(t_k))-\p^\natural(t_i)|<40\times 50(k-1)\delta'\leq2000(2|F|+1)^n\delta',
\]
and similarly
\[
|\psi^\natural(t_i)\pi_i(\psi^\natural(t_1),\ldots,\psi^\natural(t_k))-\psi^\natural(t_i)|<2000(2|F|+1)^n\delta'
\]
whence, by Lemma \ref{L -Linearity},
\begin{align*}
| \p^\natural (t ) - \psi^\natural (t) |&\leq\bigg|\sum_{i=1}^k\p^\natural(t_i)\pi_i(\p^\natural(t_1),\ldots,\p^\natural(t_k))-\sum_{i=1}^k\psi^\natural(t_i)\pi_i(\psi^\natural(t_1),\ldots,\psi^\natural(t_k))\bigg|\\
&\hspace{5cm} + 2\times 150\times 50(2|F|+1)^{2n} \delta'\\
&\leq \sum_{i=1}^k|\p^\natural(t_i)\pi_i(\p^\natural(t_1),\ldots,\p^\natural(t_k))-\psi^\natural(t_i)\pi_i(\psi^\natural(t_1),\ldots,\psi^\natural(t_k))|\\
&\hspace{5cm} +15000(2|F|+1)^{2n} \delta'\\
&\leq (2000(2|F|+1)^n\delta'+8n\delta'+2000(2|F|+1)^n\delta')k\\
&\hspace{5cm} +15000(2|F|+1)^{2n} \delta'\\
&\leq 20000(2|F|+1)^{2n}\delta'.
\end{align*}
Let $s\in E$. We can find $s'\in  \mathbf{\Sigma}F_\sym^{ n}$ such that $| s - s' | < \delta$. Then $|\theta(s')-s'|<\delta'$, and thus $|s-\theta(s')|<\delta+\delta'$. Hence,
\[
|\p(s)-\p^\natural(s')|=|\p(s)-\p(\theta(s'))|<|s-\theta(s')|+ 40\delta'<\delta+40\delta' 
\]
Similarly, $|\psi(s)-\psi^\natural(s')|<\delta+40\delta' $, so that
\begin{align*}
 |\varphi(s) - \psi(s) |&\leq |\p(s)-\p^\natural(s')|+| \p^\natural (s') - \psi^\natural (s') |+ |\psi(s)-\psi^\natural(s')|\\
 &<2\delta+80\delta'+20000(2|F|+1)^{2n}\delta'\\
 &<\e/2+21000(2|F|+1)^{2n}\delta'<\e.
\end{align*}
Consequently, $|\varphi - \psi |_{E}<\e$ and thus
\begin{align*}
N_\e(\SA ( E ,4mn,\delta' ,d))\leq  \NSA (F ,n,50\delta' ,d)\leq  \NSA (F ,n,\delta ,d),
\end{align*}
By taking a limit supremum over $d$, an infimum over $\delta>0$ and over $n\in\Nb$, we conclude that $s_{\e}( E) \leq s(F)$. Hence, Lemma \ref{L-ineq lim sup epsilon} shows that $s( E )\leq s(F )$. 

The same proof applies to $\slower$ and $s_\omega$.
\end{proof}

In view of Theorem \ref{P-generating}, we introduce the following isomorphism invariant for dynamically finitely generated  (e.g. finitely generated ergodic) equivalence relations. 

\begin{definition}
Let $R$ be a pmp equivalence relation on $(X,\mu)$. Assume that $R$ is dynamically finitely generated and let $F$ be a finite dynamical generating set. Then we set
\[
s(R):=s(F)
\] 
and call this value the \emph{sofic dimension} of $R$. we define similarly $\slower(R)$ and $s_\omega(R)$. 
\end{definition}

\begin{remark}
It is possible to extend the definition of $s(R)$ to pmp equivalence relation which are not finitely generated: see \cite[Definition 2.3]{dkp}.
\end{remark}

We end this section with the  proof of  the formula $s(R)\leq \cost(R)$ mentioned in the introduction.

\begin{lemma}\label{L - cost} Let $R$ be a pmp equivalence relation on $(X,\mu)$ and let $F$ be a finite subset of $ \llbracket\R\rrbracket$. Then $s(F)\leq \cost(F)$.
\end{lemma}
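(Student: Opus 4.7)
The plan is a direct counting argument on partial permutations. For a microstate $\varphi\in\SA(F,n,\delta,d)$ the restriction $\varphi|_F$ is determined by the tuple $(\varphi(s))_{s\in F}$, so it suffices to bound, for each $s\in F$ separately, the number of elements of $\llbracket d\rrbracket$ that $\varphi(s)$ can equal. The key observation is that the trace-preservation condition pins $|\dom\varphi(s)|$ down to within $O(\delta d)$ of $\mu(\dom s)\,d$, and that Stirling's formula then shows the number of partial permutations of $\{1,\dots,d\}$ with a given domain size $\alpha d$ grows like $e^{\alpha d\log d+O(d)}$, which is exactly what is needed to recover $\mu(\dom s)$ on the $d\log d$ scale.

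To make the first step precise, observe that $s^{-1}s$ is the identity transformation on $\dom s$, so $\tau_R(s^{-1}s)=\mu(\dom s)$. For $n\geq 3$, $(F,n,\delta)$-multiplicativity gives $|\varphi(s^{-1}s)-\varphi(s^{-1})\varphi(s)|<\delta$, while Lemma~\ref{L - sofic approx distance}(1) gives $|\varphi(s^{-1})-\varphi(s)^{-1}|\leq 6\delta$ and hence, via Lemma~\ref{L - Distance}(6), $|\varphi(s^{-1})\varphi(s)-\varphi(s)^{-1}\varphi(s)|\leq 6\delta$. Combining these with $(F,n,\delta)$-trace preservation and Lemma~\ref{L - Distance}(4), and using that the trace of the projection $\varphi(s)^{-1}\varphi(s)$ in $\llbracket d\rrbracket$ equals $|\dom\varphi(s)|/d$, I would obtain
\[
\big|\,|\dom\varphi(s)|/d-\mu(\dom s)\,\big|\leq 8\delta.
\]

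For the second step, the number of partial permutations of $\{1,\dots,d\}$ with domain of fixed size $k$ equals $\binom{d}{k}^2 k!$. Stirling gives $\log k!=k\log k-k+O(\log k)$ while $\log\binom{d}{k}=O(d)$, so for $k=\alpha d$ one has
\[
\frac{1}{d\log d}\log\!\Big(\binom{d}{k}^2 k!\Big)\leq \alpha+o(1)
\]
uniformly as $d\to\infty$. Summing over the $O(\delta d)$ domain sizes permitted by Step~1 only absorbs into the $O(d)$ remainder, so the number of admissible values of $\varphi(s)$ is bounded by $\exp\!\big((\mu(\dom s)+8\delta)\,d\log d+O(d)\big)$. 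Multiplying over $s\in F$, taking logarithms, dividing by $d\log d$, and then taking $\limsup_{d\to\infty}$, $\inf_{\delta>0}$, and $\inf_n$ yields $s(F)\leq\cost(F)$. The only mildly delicate point is Step~1, where the technical lemmas of Section~\ref{S-Preliminary} convert the trace-preservation hypothesis on the projection $s^{-1}s$ into an honest statement about $|\dom\varphi(s)|$; the Stirling estimate and the final assembly are routine.
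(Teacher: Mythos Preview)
Your proof is correct and follows essentially the same approach as the paper's own proof: both bound $|\dom\varphi(s)|$ (equivalently $|\ran\varphi(s)|$) via the trace/multiplicativity conditions on $s^{-1}s$ (or $ss^{-1}$), then count partial permutations with a prescribed domain size as $\binom{d}{k}^2 k!$ and apply Stirling. The only cosmetic differences are that the paper fixes $n=4$ and obtains the constant $9\delta$ in place of your $8\delta$, and it writes out the Stirling computation a bit more explicitly.
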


\begin{proof}
We show that $s(F,4)\leq \cost(F)$, from which the lemma follows readily. Let $\delta>0$ and $d\in \IN$. Take a $\p\in \SA(F,4,\delta,d)$. Then for every $s\in F$ we have by Lemma \ref{L - sofic approx distance}
\begin{align*}
|\tr(\p(s)\p(s)^{-1})-\tau(ss^{-1})|&<|\tr(\p(s)\p(s)^{-1})-\tr(\p(ss^{-1})|\\
&\hspace{2cm}+|\tr(\p(ss^{-1}))-\tau(ss^{-1})|\\
&<8\delta+\delta=9\delta.
\end{align*}
Thus, we have $|F|$ partial permutations $\p(s)$,  $s\in F$, of the set with $d$ elements whose respective domains $A_s$ and ranges $B_s$ have cardinality $(\cost(s)\pm 9\delta)d$. Since we have  $\binom d {|A_s|}$ ways to choose $A_s$ and $B_s$, and $|A_s|!$ ways to choose a bijection from $A_s$ to $B_s$, we obtain
\[
\NSA(F,4,\delta,d)\leq \prod_{s\in F} {\binom d {|A_s|}}^2|A_s|!=\prod_{s\in F} \frac{d!^2}{|A_s|!(d-|A_s|)!^2}
\]
An easy computation using  the Stirling formula shows that, taking the limit supremum over $d$,
\begin{align*}
s(F,4,\delta)&\leq \sum_{s\in F} 2 - (\cost(s)- 9\delta) - 2(1-\cost(s)- 9\delta)\\
&< \cost(F)+30|F|\delta.
\end{align*}
The claim follows by taking the infimum over $\delta$.
\end{proof}

\begin{proposition}
Let $R$ be a finitely generated ergodic pmp equivalence relation on $(X,\mu)$. Then 
$s(R)\leq \cost(R)$.
\end{proposition}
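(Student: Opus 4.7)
The plan is to combine Lemma~\ref{L - cost} with Theorem~\ref{P-generating} via a refined microstate count that uses a decomposition of $F_0$-elements into $G$-words restricted by projections. Since $R$ is ergodic and finitely generated, the preceding proposition provides a finite dynamical generating set $F_0$ of $R$, and Theorem~\ref{P-generating} gives $s(R) = s(F_0)$. Fix $\varepsilon > 0$ and pick a countable generating set $G = \{g_1, g_2, \ldots\}$ of $R$ with $\cost(G) < \cost(R) + \varepsilon$.

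First, since $G$ generates $R$ in the orbit sense, each $s \in F_0$ lies within uniform distance $\eta$ of a finite sum $s' = \sum_i w_i \cdot \iden_{A_i}$, where the $w_i$ are finite words in letters from some finite $G' \subset G$, the $A_i \subseteq \dom s$ are pairwise disjoint measurable sets on which $s$ coincides with $w_i$, and $\iden_{A_i}$ denotes the projection onto $A_i$. Let $\Pi$ be the (finite) collection of these projections and form $\tilde F = F_0 \cup G' \cup \Pi$. Since $\tilde F \supseteq F_0$, it is a finite dynamical generating set, so Theorem~\ref{P-generating} gives $s(R) = s(\tilde F)$. For any microstate $\p \in \SA(\tilde F, n, \delta, d)$ with $n$ large enough, Lemmas~\ref{L - sofic approx distance} and~\ref{L -Linearity} combined with multiplicativity on $\tilde F$-words produce $\p(s) \approx \sum_i \p(w_i)\,\p(\iden_{A_i})$, so the restriction $\p|_{F_0}$ is determined up to error $O(\eta+\delta)$ by $\p|_{G'\cup\Pi}$.

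Next I would bound $\NSA(\tilde F, n, \delta, d)$ in three pieces. The Stirling computation from the proof of Lemma~\ref{L - cost} applied to $G'$ alone yields the leading contribution $\cost(G')\cdot d\log d + O(d)$ to $\log\NSA$. For each $\pi \in \Pi$, trace-preservation on $\pi^2 = \pi$ forces $\p(\pi)$ to be an approximate partial identity, producing only $\binom{d}{\lfloor \mu(\pi)d\rfloor} = e^{O(d)}$ choices, which is sub-leading against $d\log d$. The residual freedom in $\p|_{F_0}$ given $\p|_{G'\cup\Pi}$ is controlled by Lemma~\ref{L-nbhd}, contributing a further factor $d^{\kappa d}$ for any prescribed $\kappa > 0$ once $\eta$ and $\delta$ are sufficiently small. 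Combining gives $s(\tilde F) \leq \cost(G') + \kappa|F_0| \leq \cost(G) + \kappa|F_0|$, and sending $\kappa, \varepsilon \to 0$ yields $s(R) \leq \cost(R)$.

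The hard part will be the sharpened projection count: Lemma~\ref{L - cost} as stated uses the generic bound $\binom{d}{|A|}^2|A|!$, which overcounts projections at leading order. I would argue separately from the near-equalities $\tr(\p(\pi)^*\p(\pi)) \approx \tr(\p(\pi)) \approx \mu(\pi)$ that nearly all of $\dom\p(\pi)$ consists of fixed points, so that the partial-identity count $\binom{d}{\lfloor \mu(\pi)d\rfloor}$ replaces the factorial term and is absorbed into the $O(d)$ error.
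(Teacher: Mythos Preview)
Your argument is correct, but it takes a genuinely different route from the paper's. The paper proceeds much more directly: given $\varepsilon>0$, it invokes \cite[Lemma~III.5]{Gaboriau2000a} to obtain a graphing $F$ of $R$ with $\cost(F)<\cost(R)+\varepsilon$ that already contains an ergodic automorphism of $(X,\mu)$; by Dye's theorem this automorphism is replaced by two partial isomorphisms of cost $\tfrac12$ each (as in Example~\ref{Ex-bernoulli}) to produce a \emph{dynamical} generating set $F'$ with $\cost(F')=\cost(F)$, after which a single application of Lemma~\ref{L - cost} gives $s(R)=s(F')\le\cost(F')<\cost(R)+\varepsilon$. In other words, the paper pushes the near-optimal graphing itself to be dynamically generating, so that Lemma~\ref{L - cost} applies verbatim.

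Your approach decouples the two roles: you fix an arbitrary dynamical generator $F_0$ and a separate near-optimal graphing $G$, enlarge to $\tilde F=F_0\cup G'\cup\Pi$, and then run a refined microstate count showing that the $F_0$- and $\Pi$-parts contribute only sub-leading terms. The key extra ingredients you need beyond Lemma~\ref{L - cost} are (i) the near-idempotent analysis forcing $\varphi(\pi)$ to be $O(\delta)$-close to a partial identity, so that projections cost only $2^d\cdot d^{\kappa' d}$ rather than $d^{\mu(\pi)d}$, and (ii) the reconstruction $\varphi(s)\approx\sum_i\varphi(w_i)\varphi(\iden_{A_i})$ via Lemmas~\ref{L - sofic approx distance} and~\ref{L -Linearity}, which bounds the residual freedom on $F_0$. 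Both steps are sound, though you should be careful with the order of quantifiers: $|\Pi|$ and the word lengths depend on $\eta$, so one must fix $\kappa$, then $\eta$ (hence $\tilde F$), and only then choose $n$ large and $\delta$ small enough that the $(2|\tilde F|+1)^{2n}\delta$-type constants from Lemma~\ref{L -Linearity} and the $|\Pi|\cdot\kappa'$ term are absorbed. The paper's route avoids this bookkeeping entirely at the price of importing Gaboriau's lemma on ergodic generators; your route is more self-contained at the counting level but longer.
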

\begin{proof}
Let $\e>0$. Since $R$ is ergodic, we may choose a generating set $F$ of $R$ (in the sense of Definition \ref{D-gen old}) with 
\[
\cost(F)<\cost(R)+\e
\]
which contains an ergodic automorphism of $(X,\mu)$ (see e.g. \cite[Lemma III.5]{Gaboriau2000a}). Since this automorphism is orbit equivalent to a Bernoulli shift (by Dye's theorem), we may replace it in $F$ by two partial isomorphisms of $(X,\mu)$ of cost $\frac 1 2$ each, which generate the same subrelation of $R$ (compare Example \ref{Ex-bernoulli}). The resulting generating set $F'$ has the same cost as that of $F$, and is dynamically generating. Thus 
\[
s(R)=s(F')\leq \cost(F')\leq \cost(R)+\e.
\] 
Hence the result by taking an infimum over $\e>0$.
\end{proof}

\section{Microstates and finite inverse semigroups}\label{S-amenable}

In this section $G$ denotes a finite inverse semigroup, which we assume to be principal for convenience. Namely, $G$ is of the form $G= \llbracket R_G\rrbracket$ where $R_G$ is an equivalence relation on a finite set $X_G$. The set $X_G$ can be taken to be the set of minimal projections in $G$, in which case $R_G$ is the equivalence relation on $X_G$ generated by von Neumann equivalence of projections: $p\sim q$ if and only if $p=s^{-1}s$ and $q=ss^{-1}$ for some $s\in G$. We endow $X_G$ with any invariant probability measure with rational values, and $G$ with the corresponding tracial state $\tau$.

Given a generating set $E$ of $G$,  a pmp equivalence relation $R$ on a probability space $(X,\mu)$, and a unital trace-preserving embedding $G\subset  \llbracket R\rrbracket$, and a finite subset $F\subset \llbracket \R\rrbracket$ containing $E$,
we denote by $\SA_G(F,n ,\delta ,d)$ the set of all maps $\p\in \SA(F,n ,\delta ,d)$ for which the restriction $\p_{|G}$ is a trace-preserving embedding. We write $\NSA_G(F,n ,\delta ,d)$ for the number of restrictions to $F$ of elements of $\SA_G(F,n ,\delta ,d)$.

\begin{lemma}\label{G equiv} There exist  integers $n_G,m_G,d_G$ depending only on $G$ such that, for every pmp measured equivalence relation $\R$,  generating set  $E$ of $G$,  unital trace-preserving embedding $G\subset \llbracket \R\rrbracket$, finite subset $F\subset \llbracket \R\rrbracket$ containing $E$,  $\e\geq 0$,  $\delta> 0$,  integers $n$ and  $d>\delta^{-1}$, we have
\[
 N_{m_G\delta+\e}(\SA (F,n_G+n,\delta ,d_Gd)) \leq N_\e(\SA_G(F,n_G+n,m_G\delta,d_Gd)).
 \]
In particular,
\[
 s(F) = \inf_{n\in \mathbb{N} }\inf_{\delta>0} \limsup_{d\to\infty} \frac{1}{d_Gd \log d_Gd} \log \NSA_G (F,n ,\delta ,d_Gd).
 \]
The number $d_G$ can be chosen to be any integer such that, for any $d\in \IN$, there exists a unital trace-preserving embedding $G\subset  \llbracket d_Gd\rrbracket$. Furthermore, the same results hold for $\underline s(F)$ and $s_\omega(F)$  provided one  replaces the limit supremum by $\liminf_{d\to\infty}$  and $\lim_{d\to\omega}$ respectively.
\end{lemma}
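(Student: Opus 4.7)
The plan is to establish the covering inequality by constructing, for each microstate $\varphi\in\SA(F,n_G+n,\delta,d_Gd)$, a nearby microstate $\widetilde\varphi\in\SA_G(F,n_G+n,m_G\delta,d_Gd)$ obtained by perturbing $\varphi|_G$ into an exact trace-preserving embedding while leaving $\varphi$ unchanged on $F\setminus G$.

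First I would make two preliminary choices depending only on $G$. Since the invariant measure on $X_G$ has rational values, pick $d_G$ to be any common denominator, so that $G$ embeds unitally and trace-preservingly into $\llbracket d_G\rrbracket$; taking $d$ diagonal copies produces an embedding $\iota_d\colon G\hookrightarrow \llbracket d_Gd\rrbracket$ for every $d$. Next, since $G$ is finite and $E$ generates $G$, choose $n_G\in\IN$ with $G\subseteq\bs E_\pm^{n_G}\subseteq\bs F_\pm^{n_G}$. The core of the argument is then the following perturbation claim: there exists $m_G\in\IN$ depending only on $G$ such that for every $d>\delta^{-1}$ and every $\varphi\in\SA(F,n_G+n,\delta,d_Gd)$, there is a unital trace-preserving embedding $\psi\colon G\hookrightarrow\llbracket d_Gd\rrbracket$ with $|\psi(g)-\varphi(g)|\le m_G\delta$ for all $g\in G$.

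To prove the perturbation claim, let $x_1,\dots,x_k$ enumerate the minimal projections of $G$. The multiplicative and trace conditions on $\bs E_\pm^{n_G}$ guarantee that $\varphi(x_i)$ is within $O(\delta)$ of a partial identity $1_{A_i}\in\llbracket d_Gd\rrbracket$, that the sets $A_i$ are nearly disjoint and nearly cover $\{1,\dots,d_Gd\}$, and that $|A_i|/(d_Gd)$ is within $O(\delta)$ of $\tau(x_i)$. Since $d_Gd\cdot\tau(x_i)\in\IN$ and $d>\delta^{-1}$, a direct matching/rounding argument (moving $O(\delta d_Gd)$ elements) produces an exact orthogonal family of projections $\tilde p_i$ of the prescribed trace with $|\tilde p_i-\varphi(x_i)|\le O(\delta)$. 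For each equivalence class of $R_G$, the corresponding partial isomorphisms in $G$ form a matrix-unit-like system; the images under $\varphi$ are approximate matrix units with respect to the $\tilde p_i$, and adjusting them to exact partial permutations compatible with the $\tilde p_i$ again costs only $O(\delta)$ (using Lemma~\ref{L - Distance}(7) to extract projections and basic permutation matching). This defines $\psi$ with the required control, where the implicit constant $m_G$ depends only on $|G|$.

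With the perturbation lemma in hand, define $\widetilde\varphi(s)=\psi(s)$ for $s\in G$ and $\widetilde\varphi(s)=\varphi(s)$ otherwise. By construction $\widetilde\varphi|_G=\psi$ is an exact trace-preserving embedding, and $|\widetilde\varphi-\varphi|_F\le m_G\delta$ since the two maps differ only on $G$. For $s,t\in\bs F_\pm^{n_G+n}$ with $st\in\bs F_\pm^{n_G+n}$, parts (5) and (6) of Lemma~\ref{L - Distance} together with the triangle inequality give
\[
|\widetilde\varphi(st)-\widetilde\varphi(s)\widetilde\varphi(t)|\le |\varphi(st)-\varphi(s)\varphi(t)|+3m_G\delta<(3m_G+1)\delta,
\]
and Lemma~\ref{L - Distance}(4) yields a similar bound on the trace error. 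After replacing $m_G$ by $3m_G+1$ this shows $\widetilde\varphi\in\SA_G(F,n_G+n,m_G\delta,d_Gd)$. Consequently, if $\{B(\psi_i,\e)\}_{i=1}^N$ covers $\SA_G(F,n_G+n,m_G\delta,d_Gd)$ with $N=N_\e(\SA_G)$ balls, then every $\varphi\in\SA(F,n_G+n,\delta,d_Gd)$ lies within $m_G\delta+\e$ of some $\psi_i$, establishing the displayed covering inequality.

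For the ``in particular'' formula, the direction $s(F)\le$(right-hand side) follows by setting $\e=0$ in the covering inequality and using Lemma~\ref{L-nbhd} together with Lemma~\ref{L-ineq} to absorb the gap between $N_{m_G\delta}$ and $N_0=\NSA$ into an arbitrarily small $\kappa$ before passing to $\limsup$ over $d$ and infima over $\delta,n$. The reverse inequality is immediate from $\SA_G\subseteq\SA$ and the fact that a limsup along the subsequence of dimensions $\{d_Gd\}_d$ is dominated by the full limsup. The identical argument, with $\limsup$ replaced by $\liminf$ or the ultralimit, delivers the analogous equalities for $\slower$ and $s_\omega$. The main obstacle is the perturbation claim: one must simultaneously round many approximately orthogonal projections to the exact prescribed integer ranks \emph{and} adjust a compatible family of partial bijections across each equivalence class of $R_G$, all while keeping the total deviation bounded by a constant depending only on $G$.
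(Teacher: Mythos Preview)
Your proposal is correct and follows essentially the same strategy as the paper: perturb $\varphi|_G$ to an exact trace-preserving embedding while leaving $\varphi$ unchanged off $G$, check that the perturbed map still lies in $\SA_G$ with loosened parameters, and deduce the covering inequality and the formula for $s(F)$ exactly as you describe. The only notable difference is in the execution of the perturbation claim: rather than rounding the images of the minimal projections and then separately repairing the off-diagonal matrix units as you outline, the paper first passes to the large subset $V^\varphi=\bigcap_{s,t\in G}\{i:\varphi(st^{-1})i=\varphi(s)\varphi(t)^{-1}i\}$ on which the compressed map $p_\varphi'\varphi p_\varphi'|_G$ is already a genuine (trace-scaling) homomorphism, adjusts the traces by trimming, and then fills in the small complement with an auxiliary embedding $\tilde\varphi$; this sidesteps the simultaneous matrix-unit adjustment you flag as the main obstacle.
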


\begin{proof}  
Let $d_G$ be such that  $d_G\tau(p)\in \IN$ for all $p\in X_G$, and let $n_G$ be  at least 4 times the length of the longest reduced word over any generating set of $G$. We set $t_G=\min_{p\in X_G} \tau(p)$ and let $m_G$ be a integer larger than $100n_G|G|^3|X_G|t_G^{-1}+10$.
Take $E,F,n,\delta,d$ as in the statement of the lemma, and consider for  all $s,t\in G$ and $\p\in \SA (F,n_G+n,\delta/m_G ,d_Gd)$ the set
\[
V_{s,t}^\p:=\{i\in \{1,\ldots, d_Gd\}\mid\,\p(st^{-1})i=\p(s)\p(t)^{-1}i\}.
\]
By the definition of $n_G$ we have $|\p(st^{-1})-\p(s)\p(t)^{-1}|<7\delta$, and thus  
\[
\left |V_{s,t}^\p\right|> (1-7\delta)d_Gd
\] 
for all $s,t\in G$. Denoting $V^\p:=\bigcap_{s,t\in G} V_{s,t}^\p$, it follows that $|V^\p|>(1-7|G|^2\delta)d_Gd$.
Let $p_\p$ be the projection onto the subset $\{i\in V^\p \mid \p(G)i\subset V^\p\}$ of $V^\p$. We have $\tr(p_\p)>1-7|G|^3\delta$ and   the restriction to $G$ of the compression 
$p_\p\p p_\p : \llbracket \R\rrbracket\to p_\p\llbracket d_Gd\rrbracket p_\p$
of $\p$ is a morphism of inverse semigroups.  For any $p\in X_G$ we have
\[
\tr(p_\p\p(p))>\tr(\p(p))-7|G|^3\delta>\tau(p)-8|G|^3\delta.
\]
Thus, there exists a projection $p'\leq p_\p\p(p)$ such that $\tr(p')>\tau(p)-8|G|^3\delta$. Furthermore, by replacing each $p'$ with $p'\times \prod_{q\neq p}q'$ if necessary, we may assume that the projections $p'$ satisfy $p'q'=0$ for all $p\neq q\in X_G$. In doing so we obtain 
\[
\tr(p')>\tau(p)-8|G|^3|X_G|\delta\geq \tau(p)(1-8|G|^3|X_G|t_G^{-1}\delta).
\] 
Let $k$ be the largest integer such that $k < (1-8|G|^3|X_G|t_G^{-1}\delta)d$. Since $d_G\tau(p)\in \IN$, we have $d_G d\left (\frac k d\tau(p)\right)\in \IN$ for all $p\in X_G$, and therefore we may assume, by further restricting $p'$ if necessary, that 
 $\tr(p')=\frac k d\tau(p)$ for all $p\in X_G$. In particular the restriction to $G$ of the compression
\[
p_\p'\p p_\p' : \llbracket \R\rrbracket\to p_\p'\llbracket d_Gd\rrbracket p_\p',
\]
where $p_\p':=\sum_{p\in X_G} p'$, is a trace scaling isomorphism and,  since we have $k\geq (1-8|G|^3|X_G|t_G^{-1}\delta)d-1$, 
\[
\tr(p_\p')\geq \sum_{p\in X_G}(1-8|G|^3|X_G|t_G^{-1}\delta-1/d)\tau(p)\geq 1-8|G|^3|X_G|t_G^{-1}\delta-1/d.
\]
Furthermore, by construction $d_Gd\tr(1-p_\p')\in \IN$ is a multiple of $d_G$, so that we can choose a trace scaling isomorphism $\tilde \p$ between $G$ and an inverse semigroup of $(1-p_\p')\llbracket d_Gd\rrbracket (1-p_\p')$ such that $\tr(\tilde \p(p))=(1-\frac k d)\tau(p)$ for all  $p\in X_G$. 

Consider then the map $\p^\natural$ defined by
\begin{align*}
 \p^\natural(s)&:=\p(s)~~\mathrm{ if }~~s\in \llbracket R\rrbracket\setminus G,\\
 \p^\natural(s)&:=p_\p'\p p_\p'+\tilde \p~~\mathrm{ if }~~s\in G.
\end{align*}
It is clear that the restriction of $\p^\natural$ to $G$ is a trace-preserving isomorphism. We now show that $\p^\natural \in \SA_G(F,n_G+n,m_G\delta,d_Gd)$. Note first that for all $s\in G$
\begin{align*}
|\p(s)-\p^\natural(s)|&\leq |(1-p_\p') \p(s) (1-p_\p')-(1-p_\p') \p^\natural(s) (1-p_\p')|\\
&\hspace{1cm}+|(1-p_\p') \p(s) p_\p'|+|p_\p' \p(s) (1-p_\p')|\\
&\leq 24|G|^3|X_G|t_G^{-1}\delta+3/d.
\end{align*}
Therefore, 
\[
|\p-\p^\natural|_{\llbracket R\rrbracket}\leq 24|G|^3|X_G|t_G^{-1}\delta+3/d.
\] 
Given $s,t\in\bs F_{\pm}^{n_G+n}$ we have
\begin{align*}
| \p^\natural(st)-\p^\natural(s)\p^\natural(t)|&\leq| \p(st)-\p(s)\p(t) |+3|\p-\p^\natural|_{\llbracket R\rrbracket} \\
&<\delta+ 72|G|^3|X_G|t_G^{-1}\delta+9/d
\end{align*}
and $|\tr(\p^\natural(s))-\tau(s)|<\delta+24|G|^3|X_G|t_G^{-1}\delta+3/d$. Since $d>\delta^{-1}$, these estimates show that $\p^\natural\in \SA_G(F,n_G+n,m_G\delta,d_Gd)$.

Now if $\p,\psi\in \SA (F,n_G+n,\delta/m_G ,d_Gd)$ are such that $|\p^\natural-\psi^\natural|_F\leq\e$, then
\begin{align*}
|\p-\psi|_F&\leq |\p-\p^\natural|_F+|\p^\natural-\psi^\natural|_F+|\psi^\natural-\psi|_F\\
&<\e+48|G|^3|X_G|t_G^{-1}\delta+6/d<\e+m_G\delta.
\end{align*}
This shows the first assertion of the lemma. For the second, let $\e$ be any real number such that $\e>m_G\delta$. Then by the first assertion 
\begin{align*}
 N_{\e}(\SA (F,n_G+n,\delta ,d_Gd))&\leq  N_{m_G\delta}(\SA (F,n_G+n,\delta ,d_Gd)) \\
 &\leq \NSA_G(F,n_G+n,m_G\delta,d_Gd).
 \end{align*}
Taking a log, a limit infimum over $d$, and infima over $\delta$ and  $n$, we obtain
\[
s_\e(F)\leq \inf_{n\in \mathbb{N} }\inf_{\delta>0} \limsup_{d\to\infty} \frac{1}{d_Gd \log d_Gd} \log \NSA_G (F,n ,\delta ,d_Gd).
\]
Thus, by Lemma \ref{L-ineq lim sup epsilon},   
\[
s(F)\leq \inf_{n\in \mathbb{N} }\inf_{\delta>0} \limsup_{d\to\infty} \frac{1}{d_Gd \log d_Gd} \log \NSA_G (F,n ,\delta ,d_Gd).
\]
The other inequality is clear, and the case of $\slower, s_\omega$ is treated similarly.
\end{proof}

We note the following easy corollary, which can also be proved directly.

\begin{corollary}\label{L - s(G) G finite}
Let $R$ be a pmp equivalence relation on $(X,\mu)$ and $F\subset \llbracket R\rrbracket$ be a finite subset containing the identity. Assume that $F$ generates a finite inverse subsemigroup $G$ of $\llbracket R\rrbracket$ (namely, there exists $n_0\in \IN$ such that $F_\pm^{n_0}=F_\pm^n$ for all $n\geq n_0$). Let $D\subset X$ be a fundamental domain for $G$. Then 
\[
\slower(F)=s(F)=\slower(G)=s(G)=1-\mu(D).
\]  
In particular, if  $R$ has finite classes and  $D$ denotes a fundamental domain, then all finite generating subsets $F\subset \llbracket R\rrbracket$ are regular and satisfy $s(F)=1-\mu(D)$. More generally, the same assertion holds if $R$ is amenable, where $D$ is the fundamental domain of the finite component of $R_3$.
\end{corollary}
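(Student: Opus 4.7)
The proof reduces to an explicit combinatorial count, which I would carry out in three stages. First, since $G = F_\pm^{n_0}$, every element of $G$ is a word of length at most $n_0$ in $F_\pm$. By Lemmas \ref{L - sofic approx distance} and \ref{L -Linearity}, for $n \geq n_0$ the restriction $\varphi|_G$ of a microstate $\varphi \in \SA(F,n,\delta,d)$ is determined by $\varphi|_F$ up to $|\cdot|_G$-distance at most a constant multiple of $\delta$; conversely $\varphi|_F$ is literally a sub-restriction of $\varphi|_G$. Together with Lemmas \ref{L-nbhd} and \ref{L-ineq lim sup epsilon} this forces $s(F) = s(G)$ and $\slower(F) = \slower(G)$, so it suffices to compute the latter.

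Second, I would apply Lemma \ref{G equiv} (with a minor rounding adaptation when the trace on $G$ inherited from $R$ happens to be irrational, in which case one instead counts exact embeddings with $|\varphi(p_i)|$ within $O(\delta D)$ of $D\alpha_i$) to reduce to counting exact unital trace-preserving embeddings $\varphi : G \hookrightarrow \llbracket D\rrbracket$ for $D = d_G d$. Since $G$ is principal, $G = \llbracket R_G\rrbracket$ for an equivalence relation $R_G$ on the finite set $\{p_1,\ldots,p_k\}$ of minimal projections; write $\alpha_i = \mu(\supp p_i)$, let $C$ range over $R_G$-classes with $\ell_C := |C|$ and common trace value $\alpha_C$, and fix a representative $i_0(C)$ of each class. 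Such an embedding is determined uniquely by (i) a partition of $\{1,\ldots, D\}$ into subsets $B_1,\ldots,B_k$ with $|B_i| = D\alpha_i$, and (ii) for each class $C$ and each $j \in C\setminus\{i_0(C)\}$, a choice of bijection $B_{i_0(C)} \to B_j$ which specifies $\varphi$ on the unique $s_{i_0(C),j} \in G$ having domain $\supp p_{i_0(C)}$ and range $\supp p_j$. Hence the number of embeddings equals
\[
\binom{D}{D\alpha_1,\ldots,D\alpha_k} \prod_C \bigl((D\alpha_C)!\bigr)^{\ell_C - 1}.
\]

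Third, Stirling's formula applied to the above expression yields, after using the identities $\sum_C \ell_C \alpha_C = \sum_i \alpha_i = 1$ and $\sum_C \alpha_C = \mu(D)$, the asymptotic $\log(\mathrm{count}) = D\log D \cdot (1 - \mu(D)) + O(D)$. Dividing by $D \log D$ and taking the limit (a genuine limit, not merely a limsup) gives $s(G) = \slower(G) = 1 - \mu(D)$.

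The main obstacle I anticipate is controlling the approximation error in the passage from microstates to exact embeddings when $\tau|_G$ is irrational: one must show that the extra combinatorial freedom coming from letting each $|B_i|$ vary within an $O(\delta D)$-window contributes only an additional $O(D)$ to the logarithm of the count, which is then absorbed into the Stirling error term. The ``in particular'' claims would then follow: for $R$ with finite classes any finite dynamical generating set must contain enough projections that it in fact generates a finite inverse subsemigroup, reducing to the main case; for amenable $R$, Dye's theorem together with the decomposition of $R$ into its finite and aperiodic parts reduces one to the finite-class case plus a hyperfinite contribution on which the fundamental domain has full measure in the aperiodic part (contributing $1$ to $s$, hence $1-\mu(D)$ overall).
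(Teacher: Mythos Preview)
Your main computation for the case where $F$ generates a finite inverse semigroup $G$ is correct and essentially equivalent to the paper's argument: where you count embeddings directly via the multinomial
\[
\binom{D}{D\alpha_1,\ldots,D\alpha_k}\prod_C\bigl((D\alpha_C)!\bigr)^{\ell_C-1},
\]
the paper instead observes that all trace-preserving embeddings $G\hookrightarrow\llbracket d_Gd\rrbracket$ are conjugate under $[d_Gd]$ and counts them as $(d_Gd)!/|[d_Gd]_G|$ with $|[d_Gd]_G|=\prod_C(D\alpha_C)!$. These are the same number, and the Stirling computation is identical. You are also right to flag the irrational-trace issue, which the paper's appeal to Lemma~\ref{G equiv} glosses over.

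The gaps are in your treatment of the two ``in particular'' claims. First, your assertion that for $R$ with finite classes any finite generating set already generates a finite inverse subsemigroup is false when the class sizes are unbounded: e.g.\ if $X=\bigsqcup_n X_n$ with each $R$-class in $X_n$ of size $n$, a single $s\in[R]$ cycling every class has $\{s^k:k\in\Nb\}$ infinite in $\llbracket R\rrbracket$. So you cannot reduce this case to the finite-semigroup case; the paper treats it instead as the special case of the amenable statement. Second, your amenable argument via Dye's theorem and a finite/aperiodic decomposition is incomplete: you have no lemma in hand for how $s$ behaves under disjoint unions, and the assertion that the aperiodic hyperfinite part ``contributes $1$ to $s$'' is precisely what needs to be proved. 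The paper's route is more direct: apply Connes--Feldman--Weiss to find, for each $\varepsilon>0$ and $n$, a finite principal inverse semigroup $G\subset\llbracket R\rrbracket$ with fundamental domain $D'$ satisfying $\mu(D\setminus D')<\varepsilon$ and with every element of $\bs F_\pm^n$ within $\varepsilon$ of some $g\in G$; then compare microstates for $F$ with exact embeddings of $G$ in both directions to squeeze $s(F,n)$ between $1-\mu(D')\pm O(\varepsilon)$, and let $\varepsilon\to0$.
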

\begin{proof}
Lemma \ref{G equiv} shows that
\[
s(F)= \inf_{n\in \mathbb{N} }\inf_{\delta>0} \limsup_{d\to\infty} \frac{1}{d_Gd \log d_Gd} \log \NSA_G (F,n ,\delta ,d_Gd).
\]
Since  $F^n=G$ for all $n\geq n_0$, we need to estimate the number of unital trace-preserving embeddings of $G$ into $\llbracket d_Gd\rrbracket$. By the definition of $d_G$ there exists at least one such  embedding $\p : G \to \llbracket d_Gd\rrbracket$. Then, for any permutation $\theta\in [d_Gd]$, the conjugate $\theta\p\theta^{-1}$ is another such embedding, and it is easy to see that any such embedding is obtained in this way. Let $[d_Gd]_G$ be the subgroup of all permutations in $[d_Gd]$ which commute with $\p(G)$.
Clearly, if $\theta_1\p\theta_1^{-1}$ and $\theta_2\p\theta_2^{-1}$ coincide on $F$, then they coincide on $G$ and thus $\theta_1^{-1}\theta_2\in [d_Gd]_G$. Hence, the total number of embeddings is
\[
\frac{(d_Gd)!}{|[ d_Gd ]_G|}.
\]
Denote by $\mu_k\in [0,1]$ the measure of a fundamental domain of the subrelation of $R_3'$ consisting of all $R_3'$-classes of cardinality $k\in \IN$. Then we have $\mu(D')=\sum_{k\geq 1} \mu_k$ and $|[ d_Gd ]_G|=\prod_{k\geq 1} (\mu_kd_Gd)!$ (both the sum and the product are uniformly finite in $d$), so that $\log |[ d_Gd ]_G|\sim \mu(D') d_Gd\log d_Gd$ as $d$ tends to infinity, using Stirling's formula.
Thus we obtain
\[
s(F,n)=1-\mu(D)
\]
for all $n\geq n_0$. Taking a limit infimum instead of a limit supremum, we see that $\slower(F,n)=1-\mu(D)$, yielding the first assertion. In the case where $R$ is amenable, a direct application of the Connes--Feldmann--Weiss theorem \cite{Connes1981a} shows that, for any $\e>0$, any finite generating set $F$ and any $n$ there exists a finite principal inverse semigroup  $G\subset \llbracket R_3\rrbracket$ such that for any $s\in \bs F_\pm ^n$, there is a $g\in G$ for which $|s-g|<\e$. In addition, $G$ can be chosen to have a fundamental domain $D'$ such that $\mu(D\setminus D')<\e$.  For every  embedding $\p : G \to \llbracket d_Gd\rrbracket$, we define a map $\bs F_\pm^n\to \llbracket d_Gd\rrbracket$ mapping $s\in \bs F_\pm^n$ to $\p(g)$ where $g$ is any element of $G$ such that $|s-g|<\e$. Taking a limit as $d\to \infty$ and $\delta\to 0$, this shows that $s(F,n)\leq 1-\mu(D')$. Taking an infimum over $n\in \IN$ and $\e\to 0$, we obtain $s(F)\geq 1-\mu(D)$. Conversely, 
for every  embedding $\p : \bs F_\pm ^n \to \llbracket d_Gd\rrbracket$, we define a map $G\to \llbracket d_Gd\rrbracket$ mapping $s\in \bs F_\pm^n$ to $\p(g)$ where $g$ is any element of $G$ such that $|s-g|<\e$. It is easily seen that this implies that $s(F,n)\leq 1-\mu(D')$, and thus, taking an infimum over $n\in \IN$ and $\e\to 0$, we obtain $s(F)\leq 1-\mu(D)$.  Thus, $s(F)= 1-\mu(D)$.  
\end{proof}

\section{Asymptotic freeness and measure concentration}

We now prove several useful lemmas related to asymptotic freeness. They are based on standard techniques, including the measure concentration property of symmetric groups. We refer to \cite[Theorem 3.9]{Voiculescu1991}, \cite[Theorem 2.7]{Voiculescu1998}, and more recently \cite[Section 3]{Brown2006e}, for analogous results in the context of free entropy. 

Let  $G$ be a finite principal inverse semigroup with fixed tracial state, as in the previous section. Given $d\in\IN$ and a unital trace-preserving embedding $j_d : G \to   \llbracket d\rrbracket$, we write $[d]_G$ for the set of permutations in $[d]\subset \llbracket d\rrbracket$ that commute with $j_d(G)$. We endow $[d]_G$ with the Hamming distance $|s-t|_d:=|\{i=1,\ldots, d\, |\, s(i)\neq t(i)\}|/d$ and  the uniform probability measure $\IP_{G,d}$ assigning $1/{|[d]_G|}$ to every permutation.  

The following is a straightforward generalization of the fact that the symmetric groups $[d]$ endowed with the Hamming distance and the uniform probability measure (which corresponds to $G=\{e\}$) form a L\'evy family (see \cite{Maurey1979} and \cite[Section 3.6]{Gromov1983}).

\begin{lemma}\label{Levy}
Fix integers $d_1<d_2<d_3<\ldots$  and unital trace-preserving embeddings $j_{d_k} : G\to \llbracket d_k\rrbracket$. Then $([d_k]_G,|\cdot|_{d_k},\IP_{G,d_k})_k$ is a L\'evy family.
\end{lemma}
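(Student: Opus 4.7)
The plan is to identify each $[d_k]_G$ explicitly with a product of symmetric groups, and then invoke Maurey's L\'evy property for symmetric groups together with its stability under finite products. The structure of $G=\llbracket R_G\rrbracket$ makes the unital trace-preserving embedding $j_{d_k}\colon G\hookrightarrow\llbracket d_k\rrbracket$ equivalent to the following data: a partition $\{1,\dots,d_k\}=\bigsqcup_{p\in X_G}A_p$ with $|A_p|=d_k\tau(p)\in\IN$, together with, for every pair $(p,q)$ lying in the same $R_G$-class $C$, a specified bijection $A_p\to A_q$ compatible with the groupoid structure of $R_G$. Since $\tau$ is $R_G$-invariant, it takes a common value $\alpha_C:=\tau(p)$ on each class $C$, and $\sum_C|C|\alpha_C=1$.

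A permutation $\sigma\in[d_k]$ commutes with $j_{d_k}(G)$ iff it preserves every $A_p$ (commutation with the minimal projections) and, for each $(p,q)$ in the class $C$ with specified bijection $s\colon A_p\to A_q$, satisfies $\sigma|_{A_q}=s\circ\sigma|_{A_p}\circ s^{-1}$ (commutation with partial isomorphisms). Fixing a representative $p_C$ in each class, the restriction $\sigma|_{A_{p_C}}$ may be chosen as an arbitrary permutation of $A_{p_C}$ and determines $\sigma$ completely. This yields a bijection
\[
[d_k]_G\;\longleftrightarrow\;\prod_{C\in X_G/R_G} \Sym(A_{p_C})\;\cong\;\prod_C S_{d_k\alpha_C}
\]
under which $\IP_{G,d_k}$ corresponds to the product of uniform measures on the factors, and a direct count (each disagreement of $\sigma_C,\tau_C$ on $A_{p_C}$ produces $|C|$ disagreements in $\{1,\dots,d_k\}$) shows that the normalized Hamming distance pulls back to the convex combination
\[
|\sigma-\tau|_{d_k}=\sum_{C} |C|\alpha_C\,|\sigma_C-\tau_C|_{d_k\alpha_C}.
\]

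Since there are only finitely many classes $C$ and $\alpha_C>0$, we have $d_k\alpha_C\to\infty$, so by Maurey's theorem each factor $(S_{d_k\alpha_C},|\cdot|_{d_k\alpha_C},\IP_{d_k\alpha_C})$ is a L\'evy family. To conclude I would invoke the standard tensorization principle: a finite product of L\'evy families equipped with the product measure and any convex combination of the factor metrics is again a L\'evy family. The main technical point of the proof is this tensorization step; it can either be cited from the concentration literature (e.g.\ \cite{Gromov1983}, \cite{Maurey1979}) or reproved directly by a coordinate-wise martingale decomposition mimicking Maurey's original argument for $S_n$, using that for any $1$-Lipschitz function $F$ on the product, the conditional expectations obtained by revealing one factor at a time form a martingale whose differences concentrate.
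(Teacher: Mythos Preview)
Your argument is correct. The paper does not actually supply a proof of this lemma; it merely asserts that the statement is ``a straightforward generalization'' of Maurey's result that the symmetric groups with normalized Hamming metric and uniform measure form a L\'evy family, citing \cite{Maurey1979} and \cite{Gromov1983}. Your decomposition $[d_k]_G\cong\prod_C S_{d_k\alpha_C}$, together with the identification of the metric as the convex combination $\sum_C|C|\alpha_C\,|\cdot|_{d_k\alpha_C}$ and the measure as the product of uniform measures, is exactly the natural way to make that generalization precise, and the tensorization step (or the direct martingale argument you sketch) is the standard route.

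One small remark: rather than appealing to a general tensorization principle, it is slightly cleaner here to run Maurey's length argument directly on the product. Using the chain of subgroups obtained by refining the factors one coordinate at a time, the increment at each of the $n_C=d_k\alpha_C$ steps in factor $C$ has diameter $O(|C|/d_k)$ in the weighted metric, so the sum of squared increments is
\[
\sum_C n_C\cdot O\!\left(\frac{|C|^2}{d_k^2}\right)=\frac{1}{d_k}\sum_C\alpha_C|C|^2\cdot O(1)\longrightarrow 0,
\]
since the number of classes and the class sizes are fixed. This avoids having to cite or reprove the product stability of the L\'evy property and keeps the argument entirely within Maurey's original framework, which is presumably why the paper regards the extension as routine.
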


Let $G$ be a finite principal inverse semigroup with fixed tracial state.
Given a pmp equivalence relation $R$ on $(X,\mu)$ and a unital trace-preserving embedding $G\subset  \llbracket R\rrbracket$, we denote by $\Res_{G}$ the trace-preserving restriction map
\begin{align*}
\Res_{G}: \llbracket R\rrbracket &\to \llbracket G\rrbracket\\
s&\mapsto sp
\end{align*}
where $p\in \llbracket R\rrbracket$ is the maximal projection (possibly 0) such that  $sp\in \llbracket G\rrbracket$.

 \begin{lemma}\label{random conjugacy}
Let $G$ be a finite principal inverse semigroup with fixed tracial state and let $n$ be a positive integer. There exists a constant $C_{G,n}$ such that for any  $\e>0$, integers $d_1<d_2<d_3<\cdots$, unital trace-preserving embeddings $j_{d_k} : G\to \llbracket d_k\rrbracket$, and partial transformations $\p_1,\ldots,\p_n,\psi_1,\ldots,\psi_n \in \llbracket d_k\rrbracket$, we have 
\begin{align*}
\int_{[d_k]_G} |\Res_{j_{d_k}(G)}&(\p_1\theta\psi_1\theta^{-1}\cdots \p_n\theta\psi_n\theta^{-1})|\, {\rm d}\IP_{G,d_k}(\theta)\\
&<C_{G,n}\max_i (|\Res_{j_{d_k}(G)}(\p_i)|,|\Res_{j_{d_k}(G)}(\psi_i)|)+\e
\end{align*}
\end{lemma}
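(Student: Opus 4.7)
The argument is a standard asymptotic-freeness estimate, analogous to those in \cite[Section 3]{Brown2006e} and \cite[Theorem 2.7]{Voiculescu1998}: conjugation by a random permutation from $[d_k]_G$ makes an element approximately free from the fixed copy $j_{d_k}(G)$. To exploit this, I would begin by decomposing each $\varphi_i = a_i + b_i$ as an orthogonal sum, with $a_i := \Res_{j_{d_k}(G)}(\varphi_i) \in j_{d_k}(G)$ and $b_i$ having graph disjoint from the equivalence relation $R_G$ underlying $j_{d_k}(G)$; similarly $\psi_i = c_i + d_i$. Composition of orthogonal sums of partial isomorphisms remains an orthogonal sum, so expanding the product $\prod_{i=1}^n (a_i+b_i)\theta(c_i+d_i)\theta^{-1}$ produces an orthogonal sum of $2^{2n}$ terms $Q_\sigma$, and $|\Res_{j_{d_k}(G)}(\cdot)|$ is additive across this sum.

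The all-$a,c$ term $\prod_i a_i \theta c_i \theta^{-1}$ collapses to $\prod_i a_i c_i \in j_{d_k}(G)$ because $\theta \in [d_k]_G$ commutes with every $c_i$, and its size is at most $\min_i \min(|a_i|,|c_i|) \leq \max_i \max(|\Res_{j_{d_k}(G)}(\varphi_i)|,|\Res_{j_{d_k}(G)}(\psi_i)|)$, yielding the main contribution of the lemma. For each of the remaining $2^{2n}-1$ terms $Q_\sigma$, at least one factor lies in $\{b_i,d_i\}$ with graph missing $R_G$. I would estimate $\int |\Res_{j_{d_k}(G)}(Q_\sigma)|\,d\IP_{G,d_k}$ by tracing through an input $i_0 \in \{1,\ldots,d_k\}$ and asking for the probability that the output falls in the $R_G$-class of $i_0$. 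Each insertion of $\theta(\cdot)\theta^{-1}$ produces two applications of $\theta^{\pm 1}$ whose images are (for random $\theta$) approximately uniform in their $[d_k]_G$-orbits, and the presence of a non-$G$ factor forces one of the matching events needed to land in $R_G$ to demand that a random image hit a specific $R_G$-class, which happens with probability $O(|G|/d_k)$. Summing over the $d_k$ inputs and the $2^{2n}-1$ terms yields an expected contribution of order $O_{G,n}(1/d_k)$, absorbed into $\e$ for $k$ sufficiently large, and the constant $C_{G,n}$ depends only on $G$ and $n$.

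The main obstacle is the combinatorial bookkeeping in the last step: one must carefully track correlations between the successive applications of $\theta$ and $\theta^{-1}$ along the product and verify that no cancellations conspire to evade the $|G|/d_k$-probability estimate on a per-input basis, since a naive union bound over intermediate points can lose crucial factors of $d_k$. The L\'evy-family property of $([d_k]_G,|\cdot|_{d_k},\IP_{G,d_k})$ from Lemma \ref{Levy}, combined with an $O(n)$-Lipschitz bound for the integrand in the Hamming metric on $[d_k]_G$, can be invoked as an auxiliary tool to concentrate the random variable around its mean should a high-probability bound be needed; for the integral estimate alone it is not strictly necessary.
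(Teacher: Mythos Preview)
The paper does not give a proof; it simply refers to \cite[Theorem~4.1]{Nica1993} and \cite[Theorem~2.1]{Collins2010} and says the lemma ``can be proved by using similar techniques.'' Your decomposition $\varphi_i=a_i+b_i$, $\psi_i=c_i+d_i$ and expansion into $2^{2n}$ orthogonal terms is the right starting point, but your analysis of the terms is inverted, and this is a genuine gap rather than a bookkeeping nuisance. You claim the $2^{2n}-1$ terms containing at least one factor from $\{b_i,d_j\}$ each have expected $|\Res_G|$ of order $O(|G|/d_k)$; this is false. Take $n=2$, $G$ trivial, $\varphi_1=\varphi_2=1$, and $\psi_1=\sigma$, $\psi_2=\sigma^{-1}$ for a fixed-point-free $\sigma\in[d_k]$. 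Then $a_i=1$, $b_i=0$, $c_i=0$, $d_i=\psi_i$, and the only nonzero term in your expansion is $1\cdot\theta\sigma\theta^{-1}\cdot 1\cdot\theta\sigma^{-1}\theta^{-1}=1$, with $|\Res_G|=1$ for every $\theta$. The cancellation you flagged as the ``main obstacle'' really does defeat the estimate: the intermediate $G$-factor $a_2=1$ commutes through $\theta$ and lets the two non-$G$ factors annihilate.

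The fix is to swap the roles of the two families. Any term $Q$ containing at least one factor $a_i$ or $c_j$ satisfies the trivial bound $|\Res_G(Q)|\leq|Q|\leq|a_i|$ (resp.\ $|c_j|$), since the measure of a product of partial transformations is at most the measure of each factor; there are $2^{2n}-1$ such terms, each bounded by $\max_i(|\Res_G(\varphi_i)|,|\Res_G(\psi_i)|)$. Only the single all-$(b,d)$ term $b_1\theta d_1\theta^{-1}\cdots b_n\theta d_n\theta^{-1}$ is a genuinely alternating product of $G$-centered elements from the two sides, and it is exactly this term to which the asymptotic-freeness combinatorics of Nica and Collins--Dykema apply, yielding $\int|\Res_G(\cdot)|\,d\IP_{G,d_k}=O_{G,n}(1/d_k)$. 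Summing gives the lemma with $C_{G,n}=2^{2n}-1$.
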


This lemma is a generalization of  \cite[Theorem 4.1]{Nica1993} and \cite[Theorem 2.1]{Collins2010} and can be proved by using similar techniques.

\begin{lemma}\label{concentration2}
Let $G$ be a finite principal inverse semigroup with fixed tracial state and let $n$ be a positive integer. There exists a constant $C_{G,n}$ such that for any  $\e>0$, any integers $d_1<d_2<d_3<\cdots$, any unital trace-preserving embeddings $j_{d_k} : G\to \llbracket d_k\rrbracket$, and any $\p_1,\ldots,\p_n,\psi_1,\ldots,\psi_n \in \llbracket d_k\rrbracket$, the sets
\begin{align*}
\Omega_{d_k,\e}(\p_i,\psi_i)&:=\{\theta\in [d_k]_G\mid\\
&\hspace{1.5cm}|\Res_{j_{d_k}(G)}(\p_1\theta\psi_1\theta^{-1}\cdots \p_n\theta\psi_n\theta^{-1})|\\
&\hspace{3cm}<C_{G,n}\max_i (| \Res_{j_{d_k}(G)}(\p_i)|,\,|\Res_{j_{d_k}(G)}(\psi_i)|)+\e\} 
\end{align*}
satisfy $\IP_{G,d_k}(\Omega_{d_k,\e}(\p_i,\psi_i))\to 1$ as $k\to \infty$.
\end{lemma}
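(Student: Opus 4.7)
The plan is to combine the expectation bound from Lemma~\ref{random conjugacy} with the L\'evy family property from Lemma~\ref{Levy} via a standard Lipschitz concentration-of-measure argument. Take $C_{G,n}$ to be the constant furnished by Lemma~\ref{random conjugacy}, and for each $k$ consider the function $f_k : [d_k]_G \to \Rb$ defined by
\[
f_k(\theta) := |\Res_{j_{d_k}(G)}(\pi_k(\theta))|,\qquad \pi_k(\theta) := \p_1\theta\psi_1\theta^{-1}\cdots \p_n\theta\psi_n\theta^{-1}.
\]
The set $\Omega_{d_k,\e}(\p_i,\psi_i)$ is precisely the sublevel set $\{\theta : f_k(\theta)<C_{G,n}M_k+\e\}$, where $M_k:=\max_i(|\Res_{j_{d_k}(G)}(\p_i)|,|\Res_{j_{d_k}(G)}(\psi_i)|)$, so the goal is to show that the bulk of the mass of $\IP_{G,d_k}$ lands in this sublevel set for large $k$.

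The first step, which is the technical heart of the argument, is to verify that $f_k$ is $2n$-Lipschitz with respect to $|\cdot|_{d_k}$, \emph{uniformly} in $k$ and in the choice of $\p_i,\psi_i$. If $|\theta-\theta'|_{d_k}\leq \eta$, then a union bound over the $2n$ occurrences of $\theta^{\pm 1}$ in the word defining $\pi_k$ (tracing each index through the composition one factor at a time) yields $|\pi_k(\theta)-\pi_k(\theta')|\leq 2n\eta$. Since $j_{d_k}(G)$ determines an equivalence relation $R_k$ on $\{1,\ldots,d_k\}$ and $|\Res_{j_{d_k}(G)}(s)|$ is nothing but the normalized cardinality of $\{i\in\dom s : (i,s(i))\in R_k\}$, the assignment $s\mapsto |\Res_{j_{d_k}(G)}(s)|$ is $1$-Lipschitz for $|\cdot|$, and the Lipschitz bound on $f_k$ follows by composition.

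By Lemma~\ref{Levy}, the spaces $([d_k]_G,|\cdot|_{d_k},\IP_{G,d_k})$ form a L\'evy family, so the standard concentration estimate for Lipschitz functions on L\'evy families gives, for any fixed $\eta>0$,
\[
\IP_{G,d_k}\bigl(\{\theta\in[d_k]_G : |f_k(\theta)-m_k|>\eta\}\bigr)\longrightarrow 0\quad\text{as }k\to\infty,
\]
where $m_k$ denotes a median of $f_k$; the same property forces $|m_k-\int f_k\,d\IP_{G,d_k}|\to 0$. Applying Lemma~\ref{random conjugacy} with $\e/2$ in place of $\e$ we get $\int f_k\,d\IP_{G,d_k}<C_{G,n}M_k+\e/2$, and combining with the concentration estimate applied at $\eta=\e/4$ yields $\IP_{G,d_k}(\Omega_{d_k,\e}(\p_i,\psi_i))\to 1$, as required. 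The only substantive verification is the uniform Lipschitz bound; the main subtlety there is handling $\theta$ and $\theta^{-1}$ symmetrically in the product (this is what produces the factor $2n$), while the remaining steps are formal applications of well-known concentration-of-measure results.
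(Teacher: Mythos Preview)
Your argument is correct and is essentially the paper's proof, repackaged in the language of Lipschitz concentration around the median rather than in the set-neighborhood formulation of the L\'evy property. The paper proves the same $2n$-Lipschitz estimate on $\theta\mapsto \pi_k(\theta)$, uses Markov's inequality together with the bound $|\Res_{j_{d_k}(G)}(\cdot)|\leq 1$ and Lemma~\ref{random conjugacy} to show directly that $\liminf_k \IP_{G,d_k}(\Omega_{d_k,\e/2})>0$, and then invokes the L\'evy property on sets to conclude that $\IP_{G,d_k}(V_\eta(\Omega_{d_k,\e/2}))\to 1$, which combined with $V_\eta(\Omega_{d_k,\e/2})\subset\Omega_{d_k,\e}$ for $\eta<\e/4n$ finishes the proof. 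Your route through the median is a standard equivalent reformulation; the only extra step you incur is the (easy, via dominated convergence and the uniform bound $0\leq f_k\leq 1$) verification that $|m_k-\int f_k\,d\IP_{G,d_k}|\to 0$, which the paper sidesteps by working with sets from the outset.
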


\begin{proof}
Let $\eta>0$ and denote by $V_\eta(\Omega_{d_k,\e})$ the $\eta$-neighborhood of $\Omega_{d_k,\e}$ for the Hamming distance. If $\theta' \in V_\eta(\Omega_{d_k,\e})$, then 
\begin{align*}
|\p_1\theta'\psi_1\theta'^{-1}\ldots \p_n\theta'\psi_n\theta^{-1}- \p_1\theta\psi_1\theta^{-1}\cdots \p_n\theta\psi_n\theta^{-1}|&\\
 &\hspace{-4cm}<\sum_{i=1}^n |\theta'\psi_i\theta'^{-1}-\theta\psi_i\theta^{-1}|<2n\eta.
\end{align*}
Thus if $\e>0$ and $\eta<\e/4n$, then $V_\eta(\Omega_{d_k,\e/2})\subset \Omega_{d_k,\e}$.  Since
\[
|\Res_{j_{d_k}(G)}(\p_1\theta\psi_1\theta^{-1}\cdots \p_n\theta\psi_n\theta^{-1})|\leq 1,
\]
Lemma \ref{random conjugacy} shows that
\[
\liminf_{k\to \infty} \IP_{G,d_k}(\Omega_{d_k,\e/2})>0.
\]
By the L\'evy property, $\liminf_{k\to \infty} \IP_{G,d_k}(V_\eta(\Omega_{d_k,\e}))=1$ and thus 
\[
\lim_{k\to \infty} \IP_{G,d_k}(V_\eta(\Omega_{d_k,\e}))=1,
\]
proving the lemma.\end{proof}

\section{The free product formula}

Let $R$ be a pmp equivalence relation on $(X,\mu)$.

\begin{theorem}\label{T- amalgamated} Assume that  $R=R_1*_{R_3}R_2$ is a free product of equivalence relations amalgamated over an amenable subrelation $R_3$. Assume that $R_1$ and $R_2$ are dynamically finitely generated. Then we have
\[
 s_\omega (R)=  s_\omega(R_1)+ s_\omega(R_2)- 1+\mu(D).
\]
where $D$ is a fundamental domain of the finite component of $R_3$.
If furthermore $R_1$ and $R_2$ are  $s$-regular, then $R$ is $s$-regular and
\[
 s (R)=  s(R_1)+ s(R_2)- 1+\mu(D).
\]
\end{theorem}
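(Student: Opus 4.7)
\emph{Approach.} My plan is to establish the formula by matching upper and lower bounds on microstate counts for a suitable finite dynamical generating set of $R$, with the amalgamated free product structure controlled by a finite principal inverse semigroup approximating the amenable piece $R_3$. By the Connes--Feldman--Weiss theorem (as exploited in the second half of the proof of Corollary~\ref{L - s(G) G finite}), for any $\e>0$ I would fix a finite principal inverse semigroup $G_3\subseteq\llbracket R_3\rrbracket$ with fundamental domain $D'$ satisfying $\mu(D\setminus D')<\e$, generated by some finite set $E_3$. Choose finite dynamical generating sets $F_1,F_2$ of $R_1,R_2$ with $E_3\subseteq F_1\cap F_2$; by Lemma~\ref{L - 2 dynamical generating sets}, $F:=F_1\cup F_2$ is then a dynamical generating set of $R$. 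All microstate counts below will be computed via the $G_3$-refined counts $\NSA_{G_3}$ of Section~\ref{S-amenable}, which by Lemma~\ref{G equiv} give the same value of $s_\omega$ as $\NSA$.

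\emph{Upper bound.} Every microstate $\varphi\in\SA_{G_3}(F,n,\delta,d_{G_3}d)$ restricts to microstates $\varphi|_{F_i}\in\SA_{G_3}(F_i,n,\delta,d_{G_3}d)$ inducing a common $G_3$-embedding $\iota_\varphi$. Stratifying over embeddings and using the $[d_{G_3}d]$-conjugation invariance of the embedding-fixed counts $\NSA_\iota$ appearing in Corollary~\ref{L - s(G) G finite}, one obtains
\[
\NSA_{G_3}(F,n,\delta,d_{G_3}d)\;\leq\; \frac{\NSA_{G_3}(F_1,n,\delta,d_{G_3}d)\cdot \NSA_{G_3}(F_2,n,\delta,d_{G_3}d)}{\#\{G_3\text{-embeddings into }\llbracket d_{G_3}d\rrbracket\}},
\]
with $\log\#\{G_3\text{-embeddings}\}\sim(1-\mu(D'))\,d_{G_3}d\log(d_{G_3}d)$ by the Stirling estimate used in the proof of Corollary~\ref{L - s(G) G finite}. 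Dividing by $d_{G_3}d\log(d_{G_3}d)$, taking $\lim_{d\to\omega}$, infima over $n,\delta$, and letting $\e\to 0$ yields $s_\omega(R)\leq s_\omega(R_1)+s_\omega(R_2)-1+\mu(D)$.

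\emph{Lower bound and the main obstacle.} For the reverse inequality I would, given microstates $\varphi_1,\varphi_2$ of $F_1,F_2$ extending a common $G_3$-embedding $\iota$, build a random combined map $\varphi^\theta:F\to\llbracket d\rrbracket$ by setting $\varphi^\theta(s)=\varphi_1(s)$ for $s\in F_1$ and $\varphi^\theta(s)=\theta\varphi_2(s)\theta^{-1}$ for $s\in F_2\setminus F_1$, where $\theta\in[d]_{G_3}$ is a random permutation commuting with $\iota(G_3)$. The two definitions agree on $E_3\subseteq F_1\cap F_2$ since $\theta$ commutes with $\iota(G_3)$. The delicate step — and the main technical obstacle — is to show that for $\IP_{G_3,d}$-typical $\theta$ the map $\varphi^\theta$ extends to a genuine microstate of $\SA(F,n,\delta',d)$. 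Products of elements of $F$ lying entirely within $F_1$ or $F_2$ inherit multiplicativity and trace-preservation from $\varphi_i$, but a mixed product must be analyzed via the amalgamated free product structure $R=R_1*_{R_3}R_2$: it reduces to an alternating word, which on the microstate side takes the form $\varphi_1(s_1)\theta\varphi_2(t_1)\theta^{-1}\varphi_1(s_2)\theta\varphi_2(t_2)\theta^{-1}\cdots$. Lemma~\ref{concentration2}, combined with the L\'evy property (Lemma~\ref{Levy}), then guarantees that the $G_3$-projection of such a word is $\e$-small for a set of $\theta$ of $\IP_{G_3,d}$-measure tending to $1$; this matches the vanishing of $\tau$ on reduced amalgamated words of positive length and, together with Lemma~\ref{L - sofic approx distance}, delivers both the multiplicativity and trace-preservation conditions up to an error that can be absorbed into $\delta'$.

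\emph{Counting and regularity.} A Fubini argument, selecting for each pair $(\varphi_1|_{F_1},\varphi_2|_{F_2})$ sharing embedding $\iota$ a single $\theta\in[d]_{G_3}$ on which $\varphi^\theta$ is a valid microstate, injects pairs into distinct restrictions $\varphi^\theta|_F$, since $\varphi^\theta|_{F_1}=\varphi_1$ already pins down $\varphi_1$ and the further injectivity on the $F_2$-coordinate follows after factoring by the $|[d]_{G_3}|$-fold $\theta$-redundancy. Combined with the count of $G_3$-embeddings this gives
\[
\NSA(F,n,\delta',d_{G_3}d)\;\geq\;(1-o(1))\cdot\#\{G_3\text{-embeddings}\}\cdot \NSA_\iota(F_1)\cdot \NSA_\iota(F_2),
\]
which matches the upper bound up to lower-order terms and yields $s_\omega(R)\geq s_\omega(R_1)+s_\omega(R_2)-1+\mu(D)$. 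For the second statement, the upper-bound argument applied with $\limsup_{d\to\infty}$ gives $s(R)\leq s(R_1)+s(R_2)-1+\mu(D)$, while the lower-bound argument applied with $\liminf_{d\to\infty}$ (valid because the concentration sets of Lemma~\ref{concentration2} are eventually nonempty for every $d$) gives $\slower(R)\geq\slower(R_1)+\slower(R_2)-1+\mu(D)$. When $R_1,R_2$ are $s$-regular the right-hand sides coincide, forcing $\slower(R)=s(R)$ and establishing both the formula for $s(R)$ and the $s$-regularity of $R$.
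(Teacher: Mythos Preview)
Your outline follows essentially the same two-inequality strategy as the paper (its Lemmas~\ref{L- slower amalgam leq} and~\ref{L - S sup}): the upper bound by restricting a microstate of $F=F_1\cup F_2$ to $F_1,F_2$ and stratifying over $G_3$-embeddings, and the lower bound by gluing $\varphi_1,\varphi_2$ via a random $\theta\in[d]_{G_3}$, with Connes--Feldman--Weiss supplying $G_3$ and Lemma~\ref{G equiv} justifying the passage to $G_3$-rigid counts. The deduction of $s$-regularity from the $\limsup$/$\liminf$ inequalities is exactly as in the paper.

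There is, however, one substantive gap. You assert that Lemma~\ref{concentration2} ``delivers both the multiplicativity and trace-preservation conditions'' for $\varphi^\theta$. In the paper only trace-preservation (Claim~2) comes from concentration; multiplicativity (Claim~1) is established \emph{deterministically}, for every $\theta$, and this is the harder step. The point is that to extend $\varphi^\theta$ from $F$ to $\bs F_\pm^{n}$ one fixes, for each element, a specific alternating decomposition and takes the corresponding product of $\tilde\varphi_1,\tilde\varphi_2$-values; multiplicativity then amounts to showing that two different alternating decompositions of the same element give nearby values. This is a purely algebraic fact about the amalgamated free product: any two alternating decompositions $(s_{ij})$ and $(s'_{ij})$ are linked by intertwiners $k_{ij}\in\llbracket R_3\rrbracket$ with $s_{ij}k_{i,j+1}=k_{ij}s'_{ij}$, each $k_{ij}$ is $\e$-close to some $g_{ij}\in G_3$, and since $\theta$ commutes with $\varphi_1(G_3)=\varphi'_2(G_3)$ the $g_{ij}$'s can be passed freely between the $\tilde\varphi_1$- and $\tilde\varphi_2$-factors. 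Concentration controls $|\Res_{\varphi_1(G_3)}(\cdot)|$, hence traces, but says nothing about $|\varphi^\theta(st)-\varphi^\theta(s)\varphi^\theta(t)|$; without the intertwiner argument your extension is not well-defined up to $\delta$ and the lower bound does not go through.

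A smaller point: in the lower-bound count the map from triples $(\varphi_1,\varphi_2,\theta)$ to $\varphi^\theta|_F$ is not literally injective modulo $[d]_{G_3}$. The paper handles this by working with $\kappa$-covering numbers $N_\kappa$ (using that $F_1$ dynamically generates to transfer closeness on $F_1$ to closeness on $G_3$, hence on $F_2'$) and only at the end letting $\kappa\to 0$ via Lemma~\ref{L-ineq lim sup epsilon}. Your ``injects pairs into distinct restrictions'' overstates what is available.
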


The proof follows directly from Lemma \ref{L- slower amalgam leq} and Lemma \ref{L - S sup} below.

\begin{lemma}\label{L- slower amalgam leq} Assume that  $R=R_1*_{R_3}R_2$ is a free product of equivalence relations amalgamated over an amenable subrelation $R_3$. Assume that $R_1$ and $R_2$ dynamically finitely generating. Then 
\[
\slower (R)\geq   \slower(R_1)+ \slower(R_2)- 1+\mu(D).
\]
Furthermore, the same inequality holds for $s_\omega$ instead of $\slower$.
\end{lemma}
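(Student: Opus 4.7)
The plan is to combine microstates for $R_1$ and $R_2$ over a common finite-dimensional inverse semigroup $G \subseteq \llbracket R_3\rrbracket$ approximating the amenable relation $R_3$, using random conjugation in the centralizer of $G$ to achieve the asymptotic freeness supplied by Lemma~\ref{concentration2}. Let $F_i \subseteq \llbracket R_i\rrbracket$ be finite dynamical generating sets; by Lemma~\ref{L - 2 dynamical generating sets}, $F := F_1 \cup F_2$ is a dynamical generating set for $R$. Given $n \in \IN$ and $\e, \delta > 0$, amenability of $R_3$ together with the Connes--Feldman--Weiss arguments used in the proof of Corollary~\ref{L - s(G) G finite} produces a finite principal inverse subsemigroup $G \subseteq \llbracket R_3\rrbracket$ with rational trace, fundamental domain $D_G$ satisfying $\mu(D_G) < \mu(D) + \e$, and fine enough that every element of $\bs F_\pm^n$ lying in $\llbracket R_3\rrbracket$ is well approximated by an element of $G$ in the sense needed below.

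Apply Lemma~\ref{G equiv}. All trace-preserving embeddings $G \hookrightarrow \llbracket d_G d\rrbracket$ are conjugate under $[d_G d]$, so their total number is $N_G(d) := (d_G d)!/|[d_G d]_G|$, with $\log N_G(d) \sim (1 - \mu(D_G))\, d_G d\log d_G d$ by Stirling. Fix one such embedding $j$ and set $M_i(j) := \{\p_i \in \SA_G(F_i,n,\delta,d_G d) : \p_i|_G = j\}$. Transitivity of the conjugation action on $G$-embeddings gives $|M_i(j)| = \NSA_G(F_i,n,\delta,d_G d)/N_G(d)$, independent of $j$. For $(\p_1,\p_2) \in M_1(j) \times M_2(j)$ and $\theta \in [d_G d]_G$, define a unital map $\p^\theta$ on $F$ by $\p^\theta|_{F_1} = \p_1$ and $\p^\theta|_{F_2} = \theta\,\p_2(\cdot)\,\theta^{-1}$; since $\theta$ centralizes $j(G)$, we still have $\p^\theta|_G = j$.

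Verifying that $\p^\theta \in \SA(F,n,\delta',d_G d)$ for a controllable $\delta'$ is the heart of the proof. Multiplicativity and trace-preservation on $\bs F_\pm^n$ reduce, via the amalgamated free product structure of $R$, to estimating the $j(G)$-restriction of products of the form $\p_1(s_1)\,\theta\p_2(t_1)\theta^{-1}\,\p_1(s_2)\,\theta\p_2(t_2)\theta^{-1}\cdots$ corresponding to reduced alternating words in $R_1 *_{R_3} R_2$. Lemma~\ref{concentration2}, applied iteratively, shows that for a $\IP_{G,d_G d}$-fraction of $\theta$ tending to $1$ as $d \to \infty$, each such $j(G)$-restriction is controlled by the maximum of the $j(G)$-restrictions of the individual factors; these are themselves small because $G$ approximates $R_3$ finely. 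Hence $\p^\theta$ has the correct asymptotic trace on all reduced words, matching the vanishing free-product trace in $R$. For fixed $\p_1$, an orbit-stabilizer argument applied to the conjugation action of $[d_G d]_G$ on $M_2(j)$ then shows that the number of distinct conjugates $\theta\p_2\theta^{-1}$ yielding valid microstates is at least $(1-o(1))|M_2(j)|$.

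Summing over $\p_1 \in M_1(j)$, over $[d_G d]_G$-orbits in $M_2(j)$, and over embeddings $j$, we obtain
\[
\NSA(F,n,\delta',d_G d) \;\geq\; (1-o(1))\, N_G(d)\, |M_1(j)|\, |M_2(j)| \;=\; (1-o(1))\,\frac{\NSA_G(F_1,n,\delta,d_G d)\,\NSA_G(F_2,n,\delta,d_G d)}{N_G(d)}.
\]
Taking $\log$, dividing by $d_G d \log d_G d$, applying $\liminf_{d\to\infty}$ and infima over $\delta$ and $n$, Lemma~\ref{G equiv} yields $\slower(F) \geq \slower(F_1) + \slower(F_2) - (1 - \mu(D_G)) \geq \slower(F_1) + \slower(F_2) - 1 + \mu(D) - \e$. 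Letting $\e \to 0$ completes the proof for $\slower$; the $s_\omega$ case is identical with $\lim_{d \to \omega}$ replacing $\liminf_{d \to \infty}$. The main obstacle is the asymptotic freeness verification in paragraph three: one must carefully track how products alternating between $\p_1(F_1)$ and $\theta\p_2(F_2)\theta^{-1}$ interact with the restriction to $j(G)$, iterating Lemma~\ref{concentration2} along each reduced-word skeleton, and it is precisely here that the amenability of $R_3$ (allowing a good approximating $G$) becomes indispensable.
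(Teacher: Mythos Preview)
Your overall strategy---combine microstates for $R_1$ and $R_2$ over a common embedding of $G$, conjugate by a random $\theta\in[d_Gd]_G$, invoke concentration for the trace, and count---matches the paper's. But there is a genuine gap in how you handle multiplicativity, and it is not a matter of missing detail.

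You define $\p^\theta$ only on $F=F_1\cup F_2$. Membership in $\SA(F,n,\delta',d_Gd)$ requires values on all of $\bs F_\pm^n$, in particular on mixed products $s_1t_1s_2t_2\cdots$ with $s_i\in F_1$, $t_j\in F_2$. The paper defines $\p_\theta$ on each such element by \emph{choosing} an alternating decomposition (after cutting by a fine partition $\sP$) and taking the corresponding product of $\p_1$- and $\theta\p_2'\theta^{-1}$-images. Multiplicativity then amounts to showing that this value is approximately independent of the chosen decomposition: if $s,t\in\bs F_\pm^n$ have chosen decompositions, their concatenation is \emph{a} decomposition of $st$, but need not be the chosen one. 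This is the content of Claim~1 in the paper's proof, and it is proved for \emph{every} $\theta$ using only the amalgamated free product structure (the intertwiners $k_{ij}^{\os,\os'}\in\llbracket R_3\rrbracket$ witnessing that two decompositions of the same element differ by $R_3$-elements, which are then absorbed into $G$). Concentration plays no role here. Your sentence ``Multiplicativity and trace-preservation on $\bs F_\pm^n$ reduce \ldots\ to estimating the $j(G)$-restriction of products'' conflates the two verifications; only trace-preservation (the paper's Claim~2) reduces to those estimates and uses Lemma~\ref{concentration2}.

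A secondary point: your counting via $|M_i(j)|=\NSA_G(F_i,n,\delta,d_Gd)/N_G(d)$ and summation over $j$ is not quite right, because agreement of $\p^\theta$ and $\psi^{\theta'}$ on $F$ does not force $j=j'$ exactly---it only forces $j$ and $j'$ to be close, via the dynamical generation of $F_1$ (elements of $G$ are approximated by $\bs F_1^{n_0}$). This is why the paper passes to $\kappa$-covering numbers $N_{\kappa/2}(\SA_G(F_i',\ldots))$ rather than exact counts, and why the final step needs Lemma~\ref{L-ineq lim sup epsilon}.
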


\begin{proof} Let $F_1\subset \llbracket R_1\rrbracket$ and $F_2\subset \llbracket R_2\rrbracket$ be disjoint subsets. For convenience, we assume that $F_1$ and $F_2$ are symmetric.  
Choose a finite partition $\sP$  fine enough so that any  $s\in \bs F^n$ can be written as a sum $s=\sum_{p_i^s,q_i^s} p_i^s s_iq_i^s$ of $m_s$ pairwise orthogonal elements, where $s_i\in F^n$ and where the projections $p_i^s,q_i^s\in \sP$ are uniquely determined (up to a permutation of indices) and are respectively dominated by the range and the source of $s_i$. Up to refining $\sP$, we may also assume that either $\Res_{\llbracket R_3\rrbracket} (p_i^s s_iq_i^s)=0$ or $p_i^s s_iq_i^s\in \llbracket R_3\rrbracket$ for all $s\in \bs F^n$ and all indices $i=1\ldots m_s$. 
Since $s_i\in F^n$, it can be written as a  product $s_i=s_{i1}\cdots s_{i\ell_i^s}$ where we assume that the length $1\leq\ell_i^s\leq n$ is minimal among all such decompositions and the $s_i$ alternate membership in $F_1^{n}$ and $F_2^n$.  Let $W_i^s$ be the corresponding set of all possible tuples  $(s_{i1}, \ldots, s_{i\ell_i})$.
By our amalgamated free product assumption, for any two such decompositions $\os=(s_{i1}, \ldots, s_{i\ell_i})$  and $\os'=(s_{i1}', \ldots, s_{i\ell_i}')$ in $W_i^s$, there exist $k_{i1}^{\os,\os'},\ldots,k_{i,\ell_i+1}^{\os,\os'}\in\llbracket R_3\rrbracket$ such that $s_{ij}^*s_{ij}\geq k_{i,{j+1}}^{\os,\os'}(k_{i,{j+1}}^{\os,\os'})^*$, $(k_{ij}^{\os,\os'})^*k_{ij}^{\os,\os'}\leq s_{ij}'(s_{ij}')^*$, and
\[
s_{ij}k_{i,{j+1}}^{\os,\os'}=k_{ij}^{\os,\os'}s_{ij}'
\] 
for all $i$ and $j=1\ldots \ell_i$, where $k_{i1}^{\os,\os'}=p_i^s$ and $k_{i,{\ell_i^s+1}}^{\os,\os'}=q_i^s$. In addition,  we may assume up to further refining the partition $\sP$ that either $\Res_{\llbracket R_3\rrbracket} (s_{ij}k_{i,{j+1}}^{\os,\os})=0$ or $s_{ij}k_{i,{j+1}}^{\os,\os}\in \bb{R_3}$, for every index $j$ and every index $i$ such that $\Res_{\llbracket R_3\rrbracket} (p_i^s s_iq_i^s)=0$, and every $\os\in W_i^s$.  Let 
\begin{align*}
&K:= \bigcup_{s\in \bs F_{\pm}^n}  \bigcup_{\forall i, \forall\os,\os'\in W_i^s} \{(k_{ij}^{\os,\os'})^{\pm1}\mid i=1\ldots m_s, j=1\ldots \ell_{i}^s+1\}\\
&\hspace{5cm}\cup\{s_{ij} \mid s_{ij}\in \bb{R_3}\}
\end{align*}
Clearly, $K$ is a finite subset of  $\llbracket R_3\rrbracket$.

Let $\delta>0$ and
let $D$ be a fundamental domain of the finite component of $R_3$. Let $\e>0$ be smaller than $\delta/(200|\sP|\max_{s\in \bs F^n}\max_k (\ell_k^s+10))$.  A direct application of  the Connes--Feldman-Weiss theorem \cite{Connes1981a} shows that there exists a finite principal inverse semigroup  $G\subset \llbracket R_3\rrbracket$, as in Section \ref{S-amenable}, with a fundamental domain $D'$ such that $\mu(D\setminus D')<\e$ and, for any $k\in K$, there exists a $g\in \mathbf{\Sigma} G$ for which $g\leq k$ and $|k-g|< \e$. We may assume that $G=\bs G$ and that $\sP\subset G$. Set $F'_i=F_i\cup G$.

Let $C$ be the constant  (depending only on $G$, $F_i$, and $n$) given by Lemma \ref{concentration2}, and $m_G,n_G$ and $d_G$ be the integers (depending only on $G$, $F_i$, and $n$) as defined in Lemma \ref{G equiv}. Let $\kappa>0$. Assume that  $F_1$ is a dynamical generating set. so we can find an integer $n_0$ such that for all $g\in G$, there exists an $s\in  \mathbf{\Sigma}{F_1}^{n_0}$ such that $|g-s|<\kappa/8$.

Let $\delta'>0$ be smaller than $\delta/200|\sP|C\max_{s\in \bs F^n}\max_k (\ell_k^s+10))$ and $\kappa/4m_G(80+500(2|F_1|+1)^{2n_0})$,  and let $n'$ be greater than both $4n_0$ and $16(n_G+n)^2$. Choose microstates
   $\p_1\in \SA_G(F_1',n',\delta',d_Gd)$ and $\p_2\in \SA_G(F_2',n',\delta',d_Gd)$. 
By Lemma  \ref{G equiv} we have,  for all $d>m_Gn\delta'^{-1}$,
\begin{align*}
N_{\kappa/2}(\SA_G (L,n' ,\delta' ,d_Gd))&\geq N_{m_G\delta'+\kappa/2}(\SA(L,n',\delta'/m_G,d_Gd))\\
&\geq N_{\kappa}(\SA(L,n',\delta'/m_G,d_Gd))
\end{align*}
in either one of the two cases  $L=F_1'\subset  \llbracket R_1\rrbracket$ or $L=F_2'\subset  \llbracket R_2\rrbracket$.   Since the restrictions of $\p_1$ and $ \p_2$ to $G$ are trace-preserving embeddings, we have $|\p_1(D')|=|\p_2(D')|$, and any bijection $\rho : \p_1(D')\to \p_2(D')$ extends by $G$-equivariance to a permutation (still denoted) $\rho\in [ d_Gd ]$ implementing an isomorphism between the inverse semigroups $\p_1(G)$ and $\p_2(G)$. We let $\p_1'(s)=\p_1(s)$ and $\p_2'(s)=\rho^{-1}\p_2(s)\rho$ for $s\in \llbracket R_2\rrbracket$, so that $\p_1'$ and $\p_2'$ coincide on $G$.

Let $[ d_Gd ]_G$ be the subgroup of bijections in $[ d_Gd]$ which commute with $\p_1(G)=\p_2'(G)$. 
Given $\theta\in [ d_Gd ]_G$, we construct a map $\p_\theta$ on $\bs F^{2n}$ as follows.  For each $s\in\bs  F^{2n}$, fix a decomposition of $s_i$ as a product of elements in $(s_{i1}, \ldots, s_{i\ell_i}) \in W_i^s$ where $s=\sum_{p_i^s,q_i^s} p_i^s s_iq_i^s$ and the projections $p_i^s,q_i^s\in \sP$ are uniquely determined up to a permutation of indices. We then define
\[
\p_\theta(s):=\sum_{p_i^s,q_i^s}\p_1(p_i^s)\prod_{j=1}^{\ell_i}\tilde\p_{\alpha_{ij}^s}(s_{ij})\p_1(q_i^s)
\]
where $\tilde \p_{1}:= \p_1$, $\tilde \p_{2}:=\theta\p_2'\theta^{-1}$, and $\alpha_{ij}^s=1$ if $s_{ij}\in F_1^{n}$ and $\alpha_{ij}^s=2$ if $s_{ij}\in F_2^{n}$. This defines $\p_\theta$ for all $s\in\bs F^{2n}$, and we extend $\p_\theta$ to $\llbracket R\rrbracket$  arbitrarily. 

Let us show that $\p_\theta\in \SA(F,n,\delta,d_G d)$ for sufficiently many $\theta$. We divide the proof into two claims.

\begin{claim}
$\p_\theta$ is $(F,n,\delta)$-multiplicative.
\end{claim}
\begin{proof}[Proof of Claim 1] 

We first show that for all $v\in \Sigma F^n$, all indices $i=1,\ldots,m=m_v$ and all $(v_{i1}',\ldots, v_{i\ell_i^v}')\in W^i_v$ we have
\[
\left|\p_1(p_i^v)\prod_{j=1}^{\ell_i^v}\tilde\p_{\alpha_{ij}^v}(v_{ij})\p_1(q_i^v)-\p_1(p_i^v)\prod_{j=1}^{\ell_i^v}\tilde\p_{\alpha_{ij}^v}(v_{ij}')\p_1(q_i^v)\right| <(\ell_i+1)(2\e+52\delta')
\] 
where  $(v_1,\ldots, v_m)\in W_i^v$ is the decomposition chosen in the definition of $\p_\theta$.

Let $k_{ij}\in K$ be such that $v_{ij}'k_{i,j+1}=k_{ij}v_{ij}$, and choose $g_{ij}\in \mathbf {\Sigma}G$ such that  $|g_{ij}-k_{ij}|<\e$, $g_{i1}=p_i^s$ and $g_{i,{\ell_i^s+1}}=q_i^s$.  In particular, $|v_{ij}'g_{i,j
+1}-g_{ij}v_{ij}|<2\e$ and thus, since $n'/16>(n_G+n)$ we get using Lemma \ref{L - sofic approx distance} that
\begin{align*}
|\tilde\p_{\alpha_{ij}^v}(v_{ij})\tilde\p_{\alpha_{ij}^v}(g_{i,j+1})-\tilde\p_{\alpha_{ij}^v}(g_{ij})\tilde\p_{\alpha_{ij}^v}(v_{ij}')|&\\
&\hspace{-2cm}<|\tilde\p_{\alpha_{ij}^v}(v_{ij}g_{i,j+1})-\tilde\p_{\alpha_{ij}^v}(g_{ij}v_{ij}')|+4\delta'\\
&<|v_i'g_{i,j+1}-g_{ij}v_i|+50\delta'<2\e+50\delta'.
\end{align*}
and thus
\begin{align*}&\hspace{0cm}\bigg|\p_1(p_i^v)\tilde \p_{\alpha_{i1}^v}(v_{i1})\cdots  \tilde \p_{\alpha_{i\ell_i^v}^v}(v_{i\ell_i^v}) \p_1(q_i^v)-\p_1(p_i^v)\tilde \p_{\alpha_{i1}^v}(v_{i1}')\cdots  \p_{\alpha_{i\ell_i^v}^v}(v_{i\ell_i^v}') \p_1(q_i^v)\bigg|\\
&<\bigg|\p_1(p_i^v)\tilde \p_{\alpha_{i1}^v}(v_{i1})\cdots  \tilde \p_{\alpha_{i\ell_i^v}^v}(v_{i\ell_i^v})\tilde \p_{\alpha_{i\ell_i^v}^v}(q_i^v) \p_1(q_i^v)\\
&\hspace{3cm}-\p_1(p_i^v)\tilde \p_{\alpha_{i1}^v}(v_{i1}')\cdots  \p_{\alpha_{i\ell_i^v}^v}(v_{i\ell_i^v}') \p_1(q_i^v)\bigg|+\delta'\\
&<\bigg|\p_1(p_i^v)\tilde \p_{\alpha_{i1}^v}(v_{i1})\cdots  \tilde \p_{\alpha_{i\ell_i^v}^v}(g_{i,{\ell_m}})\p_{\alpha_{i\ell_i^v}^v}(v_{i\ell_i^v}') \p_1(q_i^v)\\
&\hspace{3.5cm}-\p_1(p_i^v)\tilde \p_{\alpha_{i1}^v}(v_{i1}')\cdots  \p_{\alpha_{i\ell_i^v}^v}(v_{i\ell_i^v}') \p_1(q_i^v)\bigg|+2\e+51\delta'\\
&\hspace{.2cm}\vdots\\
&<\bigg|\p_1(p_i^v)\tilde \p_{\alpha_{i\ell_i^v}^v}(g_{i1}) \tilde \p_{\alpha_{i1}^v}(v_{i1}')\cdots  \p_{\alpha_{i\ell_i^v}^v}(v_{i\ell_i^v}') \p_1(q_i^v)\\
&\hspace{3cm}-\p_1(p_i^v)\tilde \p_{\alpha_{i1}^v}(v_{i1}')\cdots  \p_{\alpha_{i\ell_i^v}^v}(v_{i\ell_i^v}') \p_1(q_i^v)\bigg|+(\ell_i+1)(2\e+51\delta')\\
&<(\ell_i+1)(2\e+52\delta')
\end{align*}
as claimed.

Let now $s,t\in \bs F^n$ be such that $st\in \bs F^n$ and consider the respective decompositions  $(s_{i1}, \ldots, s_{i\ell_{i,s}})\in W_s^i$, $(t_{j1}, \ldots, t_{j\ell_{j,t}})\in W_t^j$ and $(u_{k1}, \ldots, u_{k\ell_{k,u}})\in W_{u}^k$ of $s$, $t$ and $u=st$, as chosen in the definition of $\p_\theta$. Note that the subset $\{p^u_k\}_k$ of $\sP$ is included in $\{p_i^s\}_i$ and that the subset $\{q^u_k\}_k$ is included in $\{q_j^t\}_j$. In addition, since $u=st$  there exist for any $k$  indices $i_k$ and $j_k$ such that  $q_{i_k}^s=p_{j_k}^t$ and
\[
p_k^u\left (\prod_l u_{kl} \right )q_k^u=p_{i_k}^s\left (\prod_m s_{i_km} \prod_n  t_{j_kn}\right ) q_{j_k}^t.
\]
 In particular, 
 \[
 (s_{i_k1}, \ldots, s_{i_k\ell_{i_k}^s},t_{j_k1},\ldots, t_{k\ell_{j_k}^t})\in W_{u}^k
 \] 
 if $\alpha_{i_k\ell_{i_k}^s}^s\neq \alpha_{j_k1}^t$ and 
 \[
 (s_{i_k1}, \ldots, s_{i_k\ell_{i_k}^s-1}, s_{i_k\ell_{i_k}^s}t_{j_k1},t_{j_k2}, \ldots, t_{k\ell_{j_k}^t})\in W_{u}^k\]
 otherwise. The above computation shows that, in both cases,
 \begin{align*}
&\bigg|\p_1(p_k^u)\prod_{j=1}^{\ell_k^u}\tilde\p_{\alpha_{kj}^u}(u_{kj})\p_1(q_k^u)-\\
&\hspace{2cm}\p_1(p_{i_k}^s)\prod_{m=1}^{\ell_{i_k}^s}\tilde\p_{\alpha_{i_km}^s}(s_{i_km}')\prod_{n=1}^{\ell_{j_k}^t}\tilde\p_{\alpha_{j_kn}^t}(s_{j_kn}')\p_1(q_{j_k}^t)\bigg| <(\ell_k^u+1)(2\e+52\delta')
\end{align*}
for all $k$. Therefore
\begin{align*}
\bigg|\p_1(p_k^u)\prod_{j=1}^{\ell_k^u}\tilde\p_{\alpha_{kj}^u}(u_{kj})\p_1(q_k^u)\\
&\hspace{-3cm}-\p_1(p_{i_k}^s)\prod_{m=1}^{\ell_{i_k}^s}\tilde\p_{\alpha_{i_km}^s}(s_{i_km}')\p_1(q_{i_k}^s)\p_1(p_{j_k}^t)\prod_{n=1}^{\ell_{j_k}^t}\tilde\p_{\alpha_{j_kn}^t}(s_{j_kn}')\p_1(q_{j_k}^t)\bigg|\\
&\hspace{3cm} <(\ell_k^u+1)(2\e+54\delta')
\end{align*}
Summing up over $k$ we obtain
 \[
|  \p_\theta(st)-\p_\theta(s)\p_\theta(t)|< |\sP|\max_k (\ell_k^u+1)(2\e+54\delta')<\delta
\]
which shows that $\p_\theta$ is $(F,n,\delta)$-multiplicative.
\end{proof}

\begin{claim}
For any large enough  $d$, the set of  $\theta\in [d_Gd]_{G}$ such that  $\p_\theta$ is $(F,n,\delta)$-trace-preserving has cardinality at least $\frac1 2|[d_Gd ]_G|$. 
 \end{claim}

\begin{proof}[Proof of Claim 2] 
Fix $s\in F^{n}$. If $i$ is an integer so that $p_i^ss_iq_i^s\in \bb {R_3}$, then for every $j$ there exists a $g_{ij}\in G$ such that  $|s_{ij}-g_{ij}|<\e$ and therefore since both $\tilde \p_1$ and $\tilde\p_2$ are trace preserving isomorphisms on $G$, and since $\theta$ commutes to $\p_1(G)$, we obtain 
\begin{align*}
|\p_\theta(p_i^ss_iq_i^s) &-\p_1(p_i^sg_{i1}\cdots g_{i\ell_i^s}q_i^s)|\\
&= |\p_1(p_i^s)\tilde\p_{\alpha_{i1}^s}(s_{i1})\cdots \p_{\alpha_{i\ell_s^i}^s}(s_{i\ell_s^i})\p_1(q_i^s) -\p_1(p_i^sg_{i1}\cdots g_{i\ell_i^s+1}q_i^s)|\\
&\leq |\p_1(p_i^s)\p_1(g_{i1})\cdots \p_1(g_{i\ell_s^i})\p_1(q_i^s) -\p_1(p_i^sg_{i1}\cdots g_{i\ell_i^s+1}q_i^s)|+\ell_i^s\e\\
&\leq (\ell_i^s+2)(\e+\delta')
\end{align*}
where the last inequality follows from Lemma \ref{L - sofic approx distance}. Thus in this case
\begin{align*}
|\tr\circ\p_\theta(p_i^ss_iq_i^s) -\tau(p_i^ss_iq_i^s)|&\leq 5(\ell_i^s+2)(\e+\delta').
\end{align*}
We now assume that $\Res_{\bb {R_3}}(p_i^ss_iq_i^s)=0$. Since
$s_{ij}k_{i,{j+1}}^{\os,\os}=k_{ij}^{\os,\os}s_{ij}$, 
denoting $\os=(s_{i1}, \ldots, s_{i\ell_i})$, we have 
\begin{align*}
\bigg| \p_{1}( p_i^s)\prod_j\tilde\p_{\alpha_{ij}^s}(s_{ij})\p_{1}(q_i^s)- \p_{1}( p_i^s)\prod_j\tilde\p_{\alpha_{ij}^s}(s_{ij}k_{i,{j+1}}^{\os,\os})\p_{1}(q_i^s)\bigg|&\leq2\delta'\ell_i^s
\end{align*}
Let us first assume that $\Res_{\bb {R_3}}(s_{ij}k_{i,{j+1}}^{\os,\os})=0$ for all $j$.
Then for any $d$ large enough and for at least half of the $\theta\in [d_Gd ]_G$ we have by  Lemma \ref{concentration2}  applied to to $[d_Gd ]_G$
\begin{align*}
\bigg| \Res_{\p_1(G)} \bigg(\p_{1}( p_i^s)\prod_j\tilde\p_{\alpha_{ij}^s}(s_{ij})\p_{1}(q_i^s)\bigg)\bigg|&\leq\bigg| \Res_{\p_1(G)}\bigg (\prod_j\tilde\p_{\alpha_{ij}^s}(s_{ij}k_{i,{j+1}}^{\os,\os})\bigg)\bigg|+2\delta'\ell_i^s\\
&<C\max_j(|\Res_{\p_1(G)} \p_{\alpha_{ij}^s}'(s_{ij}k_{i,{j+1}}^{\os,\os})|)+\e +2\delta'\ell_i^s
\end{align*}
Let  $g\in  G$ and let $f\leq \p_1(g)\p_1(g)^{-1}$, $f\leq  \p_{\alpha_{ij}}'(s_{ij}k_{i,{j+1}}^{\os,\os}) \p_{\alpha_{ij}}'(s_{ij}k_{i,{j+1}}^{\os,\os})^{-1}$ be such that 
\[
f \p_1(g) =\Res_{\p_1(G)} \tilde  \p_{\alpha_{ij}}'(s_{ij}k_{i,{j+1}}^{\os,\os})=f \p_{\alpha_{ij}}'(s_{ij}k_{i,{j+1}}^{\os,\os}).
\] 
Since $| \p_{\alpha_{ij}}'(s_{ij}k_{i,{j+1}}^{\os,\os}) \p_1(g)^{-1}- \p_{\alpha_{ij}}'(s_{ij}k_{i,{j+1}}^{\os,\os}g^{-1})|<4\delta'$, we see that   
\[
|f-f\p_{\alpha_{ij}}'(s_{ij}k_{i,{j+1}}^{\os,\os}g^{-1})|<4\delta'.
\] On the other hand, since $\Res_{\bb {R_3}}(s_{ij}k_{i,{j+1}}^{\os,\os})=0$, we have  $\tau(s_{ij}k_{i,{j+1}}^{\os,\os}g^{-1})=0$ and thus $|\tr(\p_{\alpha_{ij}}'(s_{ij}k_{i,{j+1}}^{\os,\os}g^{-1}))|< \delta'$. Therefore $|\tr(f)|<5\delta'$, and we deduce that
\[
|\Res_{\p_1(G)} \p_{\alpha_{ij}^s}'(s_{ij}k_{i,{j+1}}^{\os,\os})|<5\delta'.
\] 
In particular, $| \tr (\p_{1}( p_i^s)\prod_j\tilde\p_{\alpha_{ij}^s}(s_{ij})\p_{1}(q_i^s))|<5C\delta'+\e+2\delta'\ell_i^s$.
Thus we obtain

 \begin{align*}
|\tr\circ \p_\theta(s)-\tau(s)|&\leq\sum_i |\tr\circ\p_\theta(p_i^ss_iq_i^s) -\tau(p_i^ss_iq_i^s)|\\
&\leq |\sP|(5C\delta'+\e+10\max_i(\ell_i^s+2)(\e+\delta'))\\
&<\delta.
\end{align*}

The last case is the situation where $s_{ij}k_{i,{j+1}}^{\os,\os}\in \bb {R_3}$ for some of the indices $j$ (where $i$ is being fixed). This can be handled as follows. Say  $j$ is an index such that $s_{ij}k_{i,{j+1}}^{\os,\os}\in \bb {R_3}$ but $\Res_{\bb{R_3}}(s_{i,j-1}k_{i,{j}}^{\os,\os})=0$. Let $g_{ij}\in G$ be such that $|s_{ij}k_{i,{j+1}}^{\os,\os}-g_{ij}|<\e$. Then 
\begin{align*}
&|\p_{\alpha_{i,j-1}^s}'(s_{i,j-1}k_{i,{j}}^{\os,\os})\p_{\alpha_{ij}^s}'(s_{ij}k_{i,{j+1}}^{\os,\os})-\p_{\alpha_{i,j-1}^s}'(s_{i,j-1}k_{i,{j}}^{\os,\os}g_{ij})|\\
&\hspace{1.6cm}\leq |\p_{\alpha_{i,j-1}^s}'(s_{i,j-1}k_{i,{j}}^{\os,\os})\p_{\alpha_{ij}^s}'(g_{ij})-\p_{\alpha_{i,j-1}^s}'(s_{i,j-1}k_{i,{j}}^{\os,\os}g_{ij})|+\e+40\delta'\\
&\hspace{2cm}= |\p_{\alpha_{i,j-1}^s}'(s_{i,j-1}k_{i,{j}}^{\os,\os})\p_{\alpha_{i,j-1}^s}'(g_{ij})-\p_{\alpha_{i,j-1}^s}'(s_{i,j-1}k_{i,{j}}^{\os,\os}g_{ij})|+\e+40\delta'\\
&\hspace{2.6cm}\leq \e+41\delta'.
\end{align*}
However, since $\Res_{\bb{R_3}}(s_{i,j-1}k_{i,{j}}^{\os,\os})=0$ we now have $\Res_{\bb{R_3}}(s_{i,j-1}k_{i,{j}}^{\os,\os}g_{ij})=0$. Iterating this step at most $\ell_i^s$ times brings us back to the situation considered above, up to an error of $\ell_i^s \e+41\delta'$. Then the same proof applies with appropriately adjusted error terms.

Finally, for any large enough  $d$, the set of permutations $\theta\in [d_Gd ]_G$ satisfying $\p_\theta$ is $(F,n,\delta)$-trace-preserving has  has cardinality at least $\frac 1 2 |[d_Gd ]_G|$. 
\end{proof}

The above two claims show that for every $\p_1\in \SA_G(F_1',n',\delta',d_Gd)$, every $\p_2\in \SA_G(F_2',n',\delta',d_Gd)$ 
and at least half of the permutations $\theta\in [ d_Gd ]_G$, we get a map
$\p_\theta\in \SA_G(F,n,\delta,d_G d)$. 
Clearly, if the maps $\p_{\theta}$ and $\psi_{\theta'}$ associated to two triples $(\p_1,\p_2,\theta)$ and  $(\psi_1,\psi_2,\theta')$ coincide on $F$, then $\p_1$, $\psi_1$ coincide on $F_1$ and  $\theta\p_2'\theta^{-1}$ and $\theta'\psi_2'\theta'^{-1}$ coincide on $F_2$. 
In particular,  we can find a permutation $\gamma\in [d_G d]$ such that $\p_2$ and $\gamma\psi_2\gamma^{-1}$ coincide on $F_2$. 
Furthermore, for every $g\in G$, we can find an element $v\in \mathbf{\Sigma} {F_1}^{n_0}$ such that $|g-v|<\kappa/8$. Suppose that $v\in {F_1}^{n_0}$. Since $\p_1=\psi_1$ on $F_1$, we have, writing $v=\prod_{i=1}^{n_0} v_i$ as a product of elements of ${F_1}$, that
\begin{align*}
 |\p_1(v)-\psi_1(v)|&\leq 2n_0\delta'.
\end{align*}
If $v=\sum_{i=1}^k v_i\in  \mathbf{\Sigma}F_1^{n_0}$, then by Lemma \ref{L -Linearity} shows that
\[
\bigg |\p_1(v) - \sum_{i=1}^k\p_1(v_i)\bigg|< 150(2|F_1|+1)^{2n_0} \delta'.
\]
and similarly
\[
\bigg |\psi_1(v) - \sum_{i=1}^k\psi_1(v_i)\bigg|< 150(2|F_1|+1)^{2n_0} \delta'.
\]
whence
\begin{align*}
| \p_1 (v) - \psi_1 (v) |&\leq \bigg|\sum_{i=1}^k\p_1(v_i)\pi_i(\p_1(v_1),\ldots,\p_1(v_k))-\sum_{i=1}^k\psi_1(v_i)\pi_i(\psi_1(v_1),\ldots,\psi_1(v_k))\bigg|\\
&\hspace{6cm}+ 300(2|F_1|+1)^{2n_0}\delta'\\ 
&\leq \sum_{i=1}^k|\p_1(v_i)\pi_i(\p_1(v_1),\ldots,\p_1(v_k))-\psi_1(v_i)\pi_i(\psi_1(v_1),\ldots,\psi_1(v_k))|\\
&\hspace{6cm}+ 300(2|F_1|+1)^{2n_0}\delta'\\ 
&\leq \sum_{i=1}^k|\p_1(v_i)-\psi_1(v_i)|+ 80k(k-1)\delta'+ 300(2|F_1|+1)^{2n_0}\delta'\\ 
&\leq (220n_0k\delta'+ 80k(k-1)\delta'+ 300(2|F_1|+1)^{2n_0}\delta'\\
&\leq 500(2|F_1|+1)^{2n_0}\delta',
\end{align*}
and thus
\begin{align*}
 |\p_1(g)-\psi_1(g)|&\leq |\p_1(g)-\p_1(v)|+|\p_1(v)-\psi_1(v)|+|\psi_1(v)-\psi_1(g)|\\
 &< 2|g-v|+ 80\delta'+500(2|F_1|+1)^{2n_0}\delta'\\
 &<\kappa/4+ 80\delta'+500(2|F_1|+1)^{2n_0}\delta'<\kappa/2.
\end{align*}
Therefore, $|\p_1-\psi_1|_{F_1'}<\kappa/2$. On the other hand, since $\p_1$ and $\p_2'$ coincide on $G$, and $\psi_1$ and $\psi_2'$ coincide on $G$,  we also obtain that  $|\theta\p_2'(g)\theta^{-1}-\theta'\psi_2'(g){\theta'}^{-1}|<\kappa/2$ for every $g\in G$. Therefore,
$|\p_2-\gamma\psi_2\gamma^{-1}|_{F_2'}<\kappa/2$,
and we deduce that
\begin{align*}
\NSA(F,n,\delta,d_G d)\hspace{-2cm}&\\
&\geq N_{\kappa/2}(\SA_G(F_1',n',\delta',d_Gd))N_{\kappa/2}(\SA_G(F_2',n',\delta',d_Gd))\cdot\frac{|[ d_Gd ]_G|}{2 (d_Gd)!}\\
&\geq N_{\kappa}(\SA(F_1',n',\delta'/m_Gn,d_Gd))N_{\kappa}(\SA(F_2',n',\delta'/m_Gn,d_Gd))\cdot\frac{|[ d_Gd ]_G|}{2 (d_Gd)!}
\end{align*}
As in Corollary \ref{L - s(G) G finite}, we have 
\[
\log |[ d_Gd ]_G|\sim \mu(D') d_Gd\log d_Gd
\] 
as $d\to \infty$, using Stirling approximation.
Since $\mu(D')\geq \mu(D)-\e$ we obtain, by taking the limit infimum over $d$ and an infimum over $\delta'$ and $n'$, that 
\begin{align*}
\underline s(F,n,\delta)&\geq \underline s_{\kappa}(F_1',n',\delta'/m_Gn) +\underline s_{\kappa}(F_2',n',\delta'/m_Gn) + \mu(D') - 1\\
&\geq \underline s_{\kappa}(F_1') +\underline s_{\kappa}(F_2') - 1+\mu(D)-\e.
\end{align*}
Note that the same inequality holds for $s_\omega$ as well, by taking a limit along the ultrafilter $\omega$ instead of a limit infimum. Thus,  taking the limit over $\kappa$ (using Lemma \ref{L-ineq lim sup epsilon})  we obtain  the inequality
\[
\underline s(F,n,\delta)\geq \underline s(F_1') +\underline s(F_2')- 1+\mu(D)-\e.
\]
Since $F_1$ is dynamically generating $R_1$, we have $\underline s(F_1')=\underline s(R_1)$  from Theorem \ref{P-generating}, and thus we have shown that, for any $n\in \IN$, $\delta>0$, and any $\e>0$ sufficiently small, there exists a finite inverse semigroup $G$ inside $\bb{R_3}$ such that 
\[
\slower (F_1\cup F_2,n,\delta)\geq   \slower(R_1)+ \slower(F_2\cup G)- 1+\mu(D)-\e.
\] 
If in addition $F_2$ is  dynamically generating, then we have both $\slower(F_2\cup G)=\slower(R_2)$
 and $\slower(F_1\cup F_2)=\slower(R)$. Therefore, by taking a limit as $\e\to 0$ and an infimum over $n$ and $\delta$, we obtain
\[
\slower (R)\geq   \slower(R_1)+ \slower(R_2)- 1+\mu(D).
\] 
which establishes Lemma \ref{L- slower amalgam leq}.
\end{proof}

\begin{lemma}\label{L - S sup} Assume that  $R=R_1*_{R_3}R_2$ is a free product of equivalence relations amalgamated over an amenable subrelation $R_3$. Assume that $R_1$ and $R_2$ dynamically finitely generated. Then  
\[
 s(R)\leq  s(R_1)+ s(R_2)- 1+\mu(D).
\]
Furthermore, the same inequality holds for $s_\omega$ instead of $s$.
\end{lemma}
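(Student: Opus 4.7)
My plan is to establish the upper bound by a purely enumerative argument that complements the lower-bound construction of Lemma~\ref{L- slower amalgam leq}: every microstate for $F = F_1 \cup F_2$ must, upon restriction, produce microstates for $F_1$ and $F_2$ that coincide on any fixed finite approximation $G \subset \llbracket R_3 \rrbracket$ of the amenable subrelation, and the redundancy on $G$ is converted into the correction $\mu(D)-1$ via the symmetry group $[d_G d]_G$.

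Fix disjoint finite dynamical generating sets $F_1 \subset \llbracket R_1 \rrbracket$ and $F_2 \subset \llbracket R_2 \rrbracket$, so that $F = F_1 \cup F_2$ is dynamically generating for $R$ by Lemma~\ref{L - 2 dynamical generating sets}. Given $\e > 0$, the Connes--Feldman--Weiss theorem supplies a finite principal inverse semigroup $G \subset \llbracket R_3 \rrbracket$ whose fundamental domain $D'$ satisfies $|\mu(D')-\mu(D)| < \e$. Set $F_i' = F_i \cup G$ and $F' = F_1' \cup F_2'$. By Lemma~\ref{G equiv}, each of the relevant values of $s$ is the asymptotic growth rate of the $G$-compatible count $\NSA_G(\cdot,n,\delta,d_G d)$. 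The restriction map $\p \mapsto (\p|_{F_1'},\p|_{F_2'})$ is injective on the set of distinct $F'$-restrictions of $\SA_G(F',n,\delta,d_G d)$ (because $F' = F_1' \cup F_2'$), and its image consists of pairs with matching $G$-parts. Since $G$ is a principal inverse semigroup with rational trace values and $d_G$ is chosen as in Section~\ref{S-amenable}, every trace-preserving embedding of $G$ into $\llbracket d_G d \rrbracket$ is $[d_G d]$-conjugate to a fixed one, with stabilizer $[d_G d]_G$; so the total number of embeddings is $E := (d_G d)!/|[d_G d]_G|$. Conjugation by $[d_G d]$ then bijects $\SA_G(F_i',n,\delta,d_G d)$ with itself, acting transitively on the $G$-part, so the number $M_i$ of $F_i'$-restrictions with any prescribed $G$-part is independent of that part and equals $\NSA_G(F_i',n,\delta,d_G d)/E$. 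Counting compatible pairs yields
\[
\NSA_G(F',n,\delta,d_G d) \leq E\, M_1\, M_2 = \frac{\NSA_G(F_1',n,\delta,d_G d)\, \NSA_G(F_2',n,\delta,d_G d)\, |[d_G d]_G|}{(d_G d)!}.
\]

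Taking logarithms, dividing by $d_G d \log(d_G d)$, and applying Stirling's formula together with the asymptotic $\log |[d_G d]_G| \sim \mu(D')\, d_G d \log(d_G d)$ already established in the proof of Corollary~\ref{L - s(G) G finite}, the last term tends to $\mu(D')-1$. Taking $\limsup$ in $d$ and then infima in $\delta$ and $n$ gives $s(F') \leq s(F_1')+s(F_2')-1+\mu(D')$. Since $F_i' \supseteq F_i$ and $F' \supseteq F$ are dynamical generating sets, Theorem~\ref{P-generating} identifies $s(F_i')$ with $s(R_i)$ and $s(F')$ with $s(R)$; letting $\e \to 0$ so that $\mu(D') \to \mu(D)$ produces the desired inequality. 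The statement for $s_\omega$ follows by the same argument with $\limsup_d$ replaced by $\lim_{d \to \omega}$.

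The main technical point is the transitivity of the conjugation action of $[d_G d]$ on the set of trace-preserving embeddings of $G$, which is what guarantees that $M_i$ is independent of the chosen $G$-part and that the orbit of embeddings has size $(d_G d)!/|[d_G d]_G|$; this is exactly the combinatorial fact already used in Corollary~\ref{L - s(G) G finite}. Everything else---the Stirling asymptotics, the fact that $\limsup$ of a monotone sum is bounded by the sum of $\limsup$'s, and the invariance Theorem~\ref{P-generating}---is routine.
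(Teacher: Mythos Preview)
Your argument is correct and is essentially the same counting argument as the paper's, just organized differently: the paper introduces the map $\Theta\colon (\p,\theta)\mapsto (\p_1,\theta\p_2\theta^{-1})$ from $\SA_G(F_1'\cup F_2',n,\delta,d_Gd)\times[d_Gd]$ and bounds its fibers by $|[d_Gd]_G|$, whereas you partition the $F_i'$-restrictions by their $G$-part and use transitivity of conjugation to see that each part has constant size $M_i=\NSA_G(F_i')/E$. Both routes yield the same key inequality $\NSA_G(F')\le \NSA_G(F_1')\,\NSA_G(F_2')\,|[d_Gd]_G|/(d_Gd)!$ and then proceed identically via Stirling, Theorem~\ref{P-generating}, and $\e\to 0$.
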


\begin{proof} Let $F_1\subset \llbracket R_1\rrbracket$ and $F_2\subset \llbracket R_2\rrbracket$ be finite subsets and let $\e>0$. As in Lemma \ref{L- slower amalgam leq}, the Connes--Feldman-Weiss theorem gives us a finite inverse semigroup $G\subset \llbracket R_3\rrbracket$ of support $X$ with a fundamental domain $D'$  such that $\mu(D')\leq \mu(D)+\e$.
Set $F'_i=F_i\cup G$. By Lemma  \ref{G equiv}, there exists an integer $d_G$ depending only on $G$ such that
\[
 s(L) = \inf_{n\in \mathbb{N} }\inf_{\delta>0} \limsup_{d\to\infty} \frac{1}{d_Gd \log d_Gd} \log \NSA_G (L,n ,\delta ,d_Gd),
\]  
in either one of the cases  $L=F_1'\subset  \llbracket R_1\rrbracket$, $L=F_2'\subset  \llbracket R_2\rrbracket$ or $L=F_1'\cup F_2'\subset  \llbracket R\rrbracket$.

Clearly, every $\p\in \SA_G (F_1'\cup F_2',n ,\delta ,d_Gd)$ gives by restriction to $\llbracket R_1\rrbracket\subset \llbracket R\rrbracket$ and $\llbracket R_2\rrbracket\subset \llbracket R\rrbracket$ two maps $\varphi_1\in \SA_G (F_1',n ,\delta ,d_Gd)$ and $\varphi_2\in \SA_G (F_2',n ,\delta ,d_Gd)$ respectively.
Consider the map $\Theta$ defined by
\begin{align*}
\Theta: \SA_G(F_1'\cup F_2',n ,\delta ,d)\times [d_Gd]&\to \SA_G(F_2',n ,\delta ,d)\times \SA_G(F_2',n ,\delta ,d)\\
(\p,\theta)&\mapsto (\p_1,\theta\p_2\theta^{-1})
\end{align*}
If $(\p,\theta)$ and $(\p',\theta')$ have the same image under $\Theta$, then $\p_1=\p_1'$ on $F_1'$ and  $\theta\p_2\theta^{-1}=\theta'\p_2'\theta'^{-1}$ on $F_2'$. Let $\rho=\theta'^{-1}\theta$. The first condition implies that $\p_{|G}=\p'_{|G}$ and the second that $\rho\p_{|G}\rho^{-1}=\p'_{|G}=\p_{|G}$. In particular, $\rho$ belongs to the commutant $[d_Gd]_G$ of $\p(G)$ in $[d_Gd]$. It follows that 
\[
 \NSA_G(F_1'\cup F_2',n ,\delta ,d_Gd)\leq  \NSA_G(F_1',n ,\delta ,d_Gd) \NSA_G(F_2',n ,\delta ,d_Gd)\cdot \frac{|[d_Gd]_G|}{(d_Gd)!}
\]
so that, as in Lemma \ref{L- slower amalgam leq}, we obtain
$s(F_1'\cup F_2')+1-\mu(D')\leq s(F_1')+s(F_2')$,
so 
\[
s(F_1'\cup F_2')\leq s(F_1')+s(F_2')- 1+\mu(D)+\e.
\] 
If $F_1$ and $F_2$ are dynamical generating sets, Theorem \ref{P-generating} applies and therefore
\[
 s(R)\leq  s(R_1)+ s(R_2)- 1+\mu(D).
\]
as announced, by taking a limit as $\e\to 0$.
\end{proof}

\begin{remark}
The above proof shows that in fact the following stronger result holds: assume that  $R=R_1*_{R_3}R_2$ is a free product of finitely generated equivalence relation amalgamated over a common subrelation $R_3$, and let $R_3'\subset R_3$ be an amenable subrelation. Then we have:
\[
 s(R)\leq  s(R_1)+ s(R_2)- 1+\mu(D)
\]
where $D$ is a fundamental domain of the finite component of $R_3'$. In particular if $R_3$ is diffuse, then $ s(R)\leq  s(R_1)+ s(R_2)- 1$.
\end{remark}

In the next corollary, the assertion that treeable equivalence relations are sofic is a recent result of Elek--Lippner \cite{Elek2010d}.  

\begin{corollary}\label{C-treeable}
Let $R$ be an ergodic finitely generated pmp equivalence relation. If $R$ is treeable, it is $s$-regular and $s(R)=\cost(R)$ (in particular, $R$ is sofic).
\end{corollary}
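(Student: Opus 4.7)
The plan is to decompose $R$ along a finite treeing into an iterated free product of amenable subrelations and then combine the explicit value of $s$ for amenable relations (Corollary~\ref{L - s(G) G finite}) with the free-product additivity of Theorem~\ref{T- amalgamated}. Since $R$ is ergodic, finitely generated, and treeable, Gaboriau's theory provides a finite treeing $\Phi = \{\phi_1, \ldots, \phi_n\} \subset \llbracket R\rrbracket$ such that $R$ decomposes as the iterated free product $R = R_{\phi_1} * R_{\phi_2} * \cdots * R_{\phi_n}$, each amalgamation being over the trivial equality relation $\Delta_X$ on $X$. This $\Delta_X$ is amenable with fundamental domain equal to all of $X$, so $\mu(D) = 1$ in the formula of Theorem~\ref{T- amalgamated}.

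Each piece $R_{\phi_i}$ is amenable, being generated by a single partial transformation. The main technical point is to ensure that each $R_{\phi_i}$ is $s$-regular, and in particular dynamically finitely generated, so that Theorem~\ref{T- amalgamated} applies. One resolves this by adjoining to $R_{\phi_i}$ finitely many projections from $L^\infty(X)$; such projections lie in $\llbracket \Delta_X\rrbracket$, so adding them does not disturb the amalgamated free product structure. Corollary~\ref{L - s(G) G finite} (amenable case) then yields $s(R_{\phi_i}) = \slower(R_{\phi_i}) = 1 - \mu(D_i)$, where $D_i$ is the fundamental domain of the finite component of $R_{\phi_i}$; this value equals $\cost(R_{\phi_i})$ by the standard cost computation for amenable relations.

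Iterating Theorem~\ref{T- amalgamated} through the $n-1$ amalgamations over $\Delta_X$, each contributing a term $-1 + \mu(X) = 0$, we conclude that $R$ is $s$-regular and
\[
s(R) \;=\; \sum_{i=1}^{n} s(R_{\phi_i}) \;=\; \sum_{i=1}^{n} \cost(R_{\phi_i}) \;=\; \cost(\Phi) \;=\; \cost(R),
\]
the final equality being Gaboriau's theorem that the cost of a treeable relation is realized by any of its treeings. Since $s(R) = \cost(R) \in [0,\infty)$, sofic microstates exist at every precision, which is exactly the statement that $R$ is sofic. The hardest point is the second paragraph: one must carefully package each single-generator amenable subrelation so that it becomes dynamically finitely generated in a manner compatible with the free product decomposition, and this is precisely where the freedom to adjoin projections from the amalgam $\llbracket \Delta_X \rrbracket$ is essential.
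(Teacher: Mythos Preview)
Your overall strategy---split $R$ along a treeing into amenable pieces and add them up via the free product formula---matches the paper's. But the step you single out as ``the hardest point'' does not go through as written. To invoke Theorem~\ref{T- amalgamated} you need each factor $R_{\phi_i}$ to be \emph{dynamically} finitely generated, meaning every element of $\llbracket R_{\phi_i}\rrbracket$, and in particular every projection in $L^\infty(X,\mu)$, must be approximable in $|\cdot|$ by elements of $\mathbf{\Sigma}F_\pm^n$. Adjoining finitely many projections to $\phi_i$ cannot achieve this: the Boolean algebra generated by those projections under the $\phi_i$-iterates is in general a proper subalgebra, and on the complement of the $\phi_i$-saturation of $\dom\phi_i$ (a positive-measure set of $R_{\phi_i}$-singletons whenever $\mu(\dom\phi_i)<1$) your generating set reduces to that finite Boolean algebra, which is not dense. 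So the individual $R_{\phi_i}$ are typically \emph{not} dynamically finitely generated, and neither Theorem~\ref{T- amalgamated} nor Corollary~\ref{L - s(G) G finite} (which computes $s(R_{\phi_i})$, not $s$ of an arbitrary finite set) applies as you use them.

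The paper resolves precisely this obstruction by choosing the treeing via Hjorth's theorem, which arranges one generator $s_1\in[R]$ to be an ergodic automorphism of $(X,\mu)$. Dye's theorem then identifies $R_{s_1}$ with the orbit relation of a Bernoulli $\mathbb{Z}$-shift, which \emph{does} admit a finite dynamical generating set (Example~\ref{Ex-bernoulli}); this makes each running product $R_{s_1}*\cdots*R_{s_k}$ dynamically finitely generated. For the remaining factor at each inductive step, the paper does not apply Theorem~\ref{T- amalgamated} directly but instead appeals to the asymmetric intermediate inequality established inside the proof of Lemma~\ref{L- slower amalgam leq}, which only requires the first factor to be dynamically generating and controls the second via $\slower(\{s_k\}\cup G)$ for an auxiliary finite $G$. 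The missing ingredient in your argument is thus not a packaging trick with projections from $\llbracket\Delta_X\rrbracket$, but the ergodicity input supplied by Hjorth's theorem.
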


\begin{proof} Since $s(R)\leq \cost(R)$ holds in general,  we have to prove the other inequality.
By Hjorth's theorem \cite{Hjorth2006}, we can find a generating set of $R$ of the form $F=\{s_1,s_2,\ldots s_n\}$ where $s_1,\ldots s_{n-1}\in [R]$ and $s_n\in \llbracket R_1\rrbracket$, such that $s_1$ is ergodic and, if $R_i$ denotes the relation generated by $s_i$, then $R=R_1*\cdots * R_n$  (see the proof of \cite[Theorem 28.3]{Kechris2004}). Each equivalence relation $R_i$ is amenable and therefore, every finite generating set is $s$-regular by Corollary \ref{L - s(G) G finite}.  Since $s_1$ is ergodic, we may replace it in $F$ by a dynamical generating set consisting of two partial isomorphisms of cost $\frac 1 2$. The corresponding finite set $F'$ is then a finite dynamical generating set for $R$, and thus $\slower(R)=\slower(F')$ and $\slower(R)=\slower(F')$.  Proceed by recurrence on $n$ and assume that $\slower(R_1*\cdots * R_{n-1})\geq\cost(R_1*\cdots * R_{n-1})$. Applying  Lemma \ref{L- slower amalgam leq}  we see that for every $n$, every $\delta>0$ and any $\e>0$ sufficiently small, there exists a partition $G$ of $X$ such that $\slower(F',n,\delta)\geq \slower(R_1*\cdots * R_{n-1}) + \slower(\{s_n\}\cup G)-\e\geq \cost(R_1*\cdots * R_{n-1})+\mu(\dom s_n)-\e=\cost(R)-\e$. Taking a limit as $\e\to0$ and an infimum over $n$ and $\delta$, it follows that $\slower(R)\geq\cost(R)$. Therefore, $R$ is $s$-regular and $s(R)=\cost(R)$.
\end{proof}

\bibliographystyle{amsalpha}

\end{document}